\providecommand{\U}[1]{\protect\rule{.1in}{.1in}}
\providecommand{\U}[1]{\protect\rule{.1in}{.1in}}
\newtheorem{theorem}{Theorem}
\newtheorem{corollary}[theorem]{Corollary}
\newtheorem{definition}[theorem]{Definition}
\newtheorem{example}[theorem]{Example}
\newtheorem{lemma}[theorem]{Lemma}
\newtheorem{notation}[theorem]{Notation}
\newtheorem{proposition}[theorem]{Proposition}
\newtheorem{remark}[theorem]{Remark}
\newenvironment{proof}[1][Proof]{\noindent\textbf{#1.} }{\ \rule{0.5em}{0.5em}}
\renewcommand{\thefootnote}{\fnsymbol{footnote}}
\begin{document}

\title{Stochastic Variational Inequalities on Non--Convex Domains}
\author{Rainer Buckdahn$^{a}$, Lucian Maticiuc$^{b,c}$, \'{E}tienne Pardoux$^{d}$ and Aurel R\u{a}\c{s}canu$^{b,e}$
$\bigskip$\\{\small $^{a}$ Laboratoire de Math\'{e}matiques, CNRS--UMR 6205, Universit\'{e} de Bretagne Occidentale,}\smallskip\\
{\small 6, avenue Victor Le Gorgeu, 29238 Brest cedex 3, France.}\medskip\\
{\small $^{b}$~Faculty of Mathematics, \textquotedblleft Alexandru Ioan Cuza\textquotedblright\ University,}\smallskip\\
{\small Carol I Blvd., no. 11, Iasi, 700506, Romania.}\medskip\\
{\small $^{c}$~Department of Mathematics, \textquotedblleft Gheorghe Asachi\textquotedblright\ Technical University of Ia\c{s}i,}\smallskip\\
{\small Carol I Blvd., no. 11, Iasi, 700506, Romania.}\medskip\\
{\small $^{d}$~LATP, Universit\'{e} de Provence, CMI, 39, rue F. Joliot--Curie, 13453, Marseille Cedex 13, France}\medskip\\
{\small $^{e}$~\textquotedblleft Octav Mayer\textquotedblright\ Mathematics Institute of the Romanian Academy, Iasi branch,}\smallskip\\
{\small Carol I Blvd., no. 8, Iasi, 700506, Romania.}}
\date{}
\maketitle

\begin{abstract}
The objective of this work is to prove in a first step the existence
and the uniqueness of a solution of the follo\-wing multivalued
deterministic differential equation:
\begin{equation*}
\left\{
\begin{array}{l}
dx(t)+\partial^{-}\varphi(x(t))(dt)\ni dm(t),\quad t>0,\smallskip\\
x(0)=x_{0}~,
\end{array}
\right.
\end{equation*}
where $m:\mathbb{R}_{+}\rightarrow\mathbb{R}^{d}$ is a continuous
function and $\partial^{-}\varphi$ is the Fr\'{e}chet
subdifferential of a $(\rho ,\gamma)$--semiconvex function $\varphi$; the
domain of $\varphi$ can be non--convex, but some
regularities of the boundary are required.

The continuity of the map $m\mapsto
x:C([0,T];\mathbb{R}^{d})\rightarrow C([0,T] ;\mathbb{R}^{d})$
associating to the input function $m$ the solution $x$ of the
above equation, as well as tightness criteria allow to pass from the
above deterministic case to the following stochastic variational
inequality driven by a multi--dimensional Brownian motion:

\begin{equation*}
\left\{
\begin{array}{l}
X_{t}+K_{t}=\xi+{\displaystyle\int_{0}^{t}}F(s,X_{s})ds+{\displaystyle\int_{0}^{t}}G(s,X_{s})
dB_{s},\quad t\geq0,\smallskip \\
dK_{t}(\omega)\in\partial^{-}\varphi( X_{t}(\omega))(dt).
\end{array}
\right.
\end{equation*}
\end{abstract}

\textbf{AMS Classification subjects:} 60H10, 60J60.\medskip

\textbf{Keywords or phrases:} Skorohod Problem; Stochastic Variational
Inequalities; Fr\'{e}chet Subdifferential. \renewcommand{\thefootnote}{%
\fnsymbol{footnote}} \footnotetext{\textit{\scriptsize E--mail addresses:}
{\scriptsize rainer.buckdahn@univ-brest.fr (Rainer Buckdahn),
lucian.maticiuc@uaic.ro (Lucian Maticiuc), pardoux@cmi.univ-mrs.fr (\'{E}%
tienne Pardoux), aurel.rascanu@uaic.ro (Aurel R\u{a}\c{s}canu)}}

\section{Introduction}

Given a multi--dimensional Brownian motion $B=(B_{t})$ and a proper $(\rho ,\gamma)$--semiconvex function $\varphi $ (for the definition
the reader is referred to Definition \ref{semiconvex function} from the next
section) defined over a possibly non--convex domain Dom$(\varphi )$ and a
random variable $\xi $ independent of $B$, which takes its values in the
closure of \textrm{Dom}$(\varphi )$, we are interested in the following
multi--valued stochastic differential equations (also called stochastic
variational inequality) driven by the Fr\'{e}chet subdifferential operator $%
\partial ^{-}\varphi $:
\begin{equation}
\left\{
\begin{array}{l}
X_{t}+K_{t}=\xi +\displaystyle\int_{0}^{t}F\left( s,X_{s}\right)
ds+\int_{0}^{t}G\left( s,X_{s}\right) dB_{s},\;t\geq 0,\smallskip \\
dK_{t}\left( \omega \right) \in \partial ^{-}\varphi \left( X_{t}\left(
\omega \right) \right) \left( dt\right) .%
\end{array}%
\right.  \label{GSSE intr}
\end{equation}%
However, in order to study the above system, we shall first solve the
following deterministic counterpart of the above equation. Given a
continuous function $m:\mathbb{R}_{+}\rightarrow \mathbb{R}^{d}$ and an
initial value $x_{0}\in \overline{\mathrm{Dom}(\varphi )}$ we look for a
pair of continuous functions $\left( x,k\right) :\mathbb{R}_{+}\rightarrow
\mathbb{R}^{2d}$ such that

\begin{equation}
\left\{
\begin{array}{l}
x\left( t\right) +k\left( t\right) =x_{0}+\displaystyle\int_{0}^{t}f\left(
s,x\left( s\right) \right) ds+m\left( t\right) ,\quad t\geq 0,\smallskip \\
dk\left( t\right) \in \partial ^{-}\varphi \left( x\left( t\right) \right)
\left( dt\right)%
\end{array}%
\right.  \label{GSE intr}
\end{equation}%
(for the notation $dk\left( t\right) \in \partial ^{-}\varphi \left( x\left(
t\right) \right) \left( dt\right) $ see Definition \ref{semiconvex function}
and Remark \ref{notation_apartenance}).

With the particular choice of $f\equiv 0$ and $\varphi $ as convexity
indicator function of a closed domain $E\subset \mathbb{R}^{d}$,%
\begin{equation*}
\varphi \left( x\right) =I_{E}\left( x\right) :=\left\{
\begin{array}{cc}
0, & \text{if }x\in E,\smallskip \\
+\infty , & \text{if }x\notin E,%
\end{array}%
\right.
\end{equation*}%
equation (\ref{GSE intr}) turns turns out to be just the Skorohod problem,
i.e., a reflection problem, associated to the data $x_{0},m$ and $E$. For
this reason we will refer to equation (\ref{GSSE intr}) as Skorohod equation.

The existence of solutions for both the Skorohod equation (\ref{GSE intr})
and for the stochastic equation (\ref{GSSE intr}), has been well studied by
different authors for the case, where $\varphi $ is a convex function. In
this case $\partial ^{-}\varphi $ becomes a maximal monotone operator and
the domain Dom$(\varphi )$ in which the solution is kept is convex. By
replacing $\partial ^{-}\varphi $ by a general maximal monotone operator $A,$
E. C\'{e}pa generalized in \cite{ce/98} the above equation in the finite
dimensional case, while A. R\u{a}\c{s}canu \cite{ra/96} investigated the
infinite dimensional case.

Deterministic variational inequalities with regular inputs, i.e.,
deterministic equations of type (\ref{GSE intr}), with convex $\varphi $ and
$m=0$ have been well studied and the corresponding results have by now
become classical. As concerns the non--convex framework, the reader is
referred to A. Marino, M. Tosques \cite{ma-to/90}, M. Degiovanni, A. Marino,
M. Tosques \cite{de-ma-to/85}, A. Marino, C. Saccon, M. Tosques \cite%
{ma-sa-to/89} or R. Rossi, G. Savar\'{e} \cite{ro-sa/04}, \cite{ro-sa/06}.
They used the concept of $\phi $--convexity (see, e.g., \cite[Definition 4.1]%
{ma-to/90}) and they provide the existence, uniqueness and continuous
dependence on the initial data of the solutions for the evolution equation
of type (\ref{GSE intr}) in the case $m=0$, $f=0$ (or $f\left( t,x\left(
t\right) \right) =f\left( t\right) $ in \cite{ro-sa/06}). We precise that
our notion of $\left( \rho ,\gamma \right) $--semiconvex function
corresponds to the particular case of a $\phi $--convex function of order $%
r=1$ with $\phi \left( x,y,\varphi \left( x\right) ,\varphi \left( y\right)
,\alpha \right) :=\rho +\gamma \left\vert \alpha \right\vert .$ This
particular form was required by the presence of the singular input $dm/dt.$

Related to our problem is the research on non--convex differential
inclusions, see, e.g., F. Papalini \cite{pa/95}, T. Cardinali, G. Colombo,
F. Papalini, M. Tosques \cite{ca-co-pa/97} and A. Cernea, V. Staicu \cite%
{ce-st/02}. In \cite{ce-st/02} it is proved the existence of a solution for
the Cauchy problem%
\begin{equation*}
x^{\prime }\in -\partial ^{-}\varphi \left( x\right) +F\left( x\right)
+f\left( t,x\right) ,\quad x\left( 0\right) =x_{0},
\end{equation*}%
where $\varphi $ is a $\phi $--convex function of order $r=2$, $F$ is a
upper semicontinuous and cyclically monotone multifunction and $f$ is a
Carath\'{e}odory function.$\smallskip $

The particular case of a reflection problem, i.e., with $\varphi =I_{E}$,
was extended to that of moving domains $E\left( t\right) ,\ t\geq 0$, by
considering the following problem (which solution is called \textit{sweeping
process}):
\begin{equation*}
\left\{
\begin{array}{l}
x^{\prime }\left( t\right) +N_{E\left( t\right) }\left( x\left( t\right)
\right) \ni f\left( t,x\left( t\right) \right) ,\quad t\geq 0,\medskip \\
x\left( 0\right) =x_{0},%
\end{array}%
\right.
\end{equation*}%
where $N_{E\left( t\right) }\left( x\left( t\right) \right) =\partial
^{-}I_{E\left( t\right) }\left( x\left( t\right) \right) $ is the external
normal cone to $E\left( t\right) $ in $x\left( t\right) $.

This problem was introduced by J.J. Moreau (see \cite{mo/77}) for the case $%
f\equiv 0$ and convex sets $E\left( t\right) ,\ t\geq 0$, and it has been
intensively studied since then by several authors; see, e.g., C. Castaing
\cite{ca/83}, M.D.P. Monteiro Marques \cite{mo/93}. For the case of a
sweeping process without the assumption of convexity on the sets $E\left(
t\right) ,\ t\geq 0$, we refer to \cite{be/00}, \cite{be-ca/96}, \cite%
{co-go/99} and \cite{co-mo/03}. The extension to the case $f\not\equiv 0$
was considered, e.g., in C. Castaing, M.D.P. Monteiro Marques \cite{ca-mo/96}
and J.F. Edmond, and L. Thibault \cite{ed-th/06} (See also the references
therein). Another extension was made in \cite{mi-ro/07} and \cite{ro/12} by
considering the quasivariational sweeping process%
\begin{equation*}
\left\{
\begin{array}{l}
x^{\prime }\left( t\right) +N_{E\left( t,x\left( t\right) \right) }\left(
x\left( t\right) \right) \ni 0,\quad t\geq 0,\medskip \\
x\left( 0\right) =x_{0}.%
\end{array}%
\right.
\end{equation*}%
The works mentioned above concern the case with vanishing driving force $m=0$%
. Let us discuss now the case of equation (\ref{GSE intr}) with singular
input $dm/dt$. The associated reflection problem with singular input $dm/dt$
has been investigated by A.V. Skorohod in \cite{sk/61} and \cite{sk/62} (for
the particular case of $E=[0,\infty )$), by H. Tanaka in \cite{ta/79} (for a
general convex domain $E$), and by P--L. Lions and A.S. Sznitman in \cite%
{li-sn/84} and Y. Saisho in \cite{sa/87} for a non--convex domains.
Generalizations from the reflection problem (with $\partial \varphi
=\partial I_{E}$) to the case of $\partial \varphi $ for a general convex
function $\varphi $, and even to the case of a maximal monotone operator $A$%
, were discussed by A. R\u{a}\c{s}canu in \cite{ra/96}, V. Barbu and A. R%
\u{a}\c{s}canu in \cite{ba-ra/97} and by E. C\'{e}pa in \cite{ce/95} and
\cite{ce/98}.

Concerning the stochastic equations, we shall mention the papers \cite%
{li-sn/84} by P--L. Lions and A.S. Sznitman and \cite{sa/87} by Y. Saisho,
but also \cite{du-is/93} by P. Dupuis and H. Ishii for stochastic
differential equations with reflecting boundary conditions. On the other
hand, A. R\u{a}\c{s}canu \cite{ra/96}, I. Asiminoaei and A. R\u{a}\c{s}canu
\cite{as-ra/97} as well as A. Bensoussan and A. R\u{a}\c{s}canu \cite%
{be-ra/97} studied stochastic variational inequalities (\ref{GSSE intr}) in
the convex case.

More recently, A. M. Gassous, A. R\u{a}\c{s}canu and E. Rotenstein obtained
in \cite{ga-ra-ro/12} existence and uniqueness results for stochastic
variational inequalities with oblique subgradients. More precisely, in their
equation the direction of reflection at the boundary of the convex domain
differs from the normal direction, an effect which is caused by the presence
of a multiplicative Lipschitz matrix acting on the subdifferential operator.
In the authors' approach it turned out to be crucial to pass first by a
study of the Skorohod problem with generalized reflection.

The objective of the present work is twice: We generalize both the
(non--)convex reflection problem as well as convex variational inequalities
to non--convex variational inequalities. Some studies in this direction have
been made already by A. R\u{a}\c{s}canu and E. Rotenstein in \cite{ra-ro/13}%
: a non--convex setup for multivalued (deterministic) differential equations
driven by oblique subgradients has been established and the uniqueness and
the local existence of the solution has been proven.

Our approach here in the present manuscript is heavily based on an a priori
discussion of the generalized Skorohod problem (\ref{GSE intr}) with $%
f\equiv 0$ and a $\left( \rho ,\gamma \right) $--semiconvex $\varphi $. We
give useful a priori estimates and prove the existence and the uniqueness of
a solution $\left( x,k\right) $ for the generalized Skorohod problem:
\begin{equation}
\left\{
\begin{array}{l}
x\left( t\right) +k\left( t\right) =x_{0}+m\left( t\right) ,\quad t\geq
0,\smallskip \\
dk\left( t\right) \in \partial ^{-}\varphi \left( x\left( t\right) \right)
\left( dt\right) ,%
\end{array}%
\right.  \label{GSE intr 2}
\end{equation}%
where $x_{0}\in \overline{\mathrm{Dom}\left( \varphi \right) }$, the input $%
m $ is a continuous function starting from zero, and $\partial ^{-}\varphi $
is the Fr\'{e}chet subdifferential of a proper, lower semicontinuous and $%
\left( \rho ,\gamma \right) $--semiconvex function $\varphi $. Here the set
\textrm{Dom}$\left( \varphi \right) $ is not necessarily convex, but however
two assumptions are required:\medskip

$1.$ \textrm{Dom}$\left( \varphi \right) $ satisfies the \textit{uniform
exterior ball condition} (see Definition \ref{UEBC});

$2.$ $\mathrm{Dom}\left( \varphi \right) \;$ satisfies the $\left( \gamma
,\delta ,\sigma \right) $--\textit{shifted uniform interior ball condition},
i.e.\medskip

\noindent there are some suitable constants $\gamma \geq 0$ and $\delta
,\sigma >0$ such that, for all $y\in \mathrm{D}$\textrm{om}$\left( \varphi
\right) $, there are some $\lambda _{y}\in (0,1]$\ and $v_{y}\in \mathbb{R}%
^{d}$, $\left\vert v_{y}\right\vert \leq 1$ with $\lambda _{y}-\left(
\left\vert v_{y}\right\vert +\lambda _{y}\right) ^{2}\gamma \geq \sigma $
and
\begin{equation*}
\overline{B}\left( x+v_{y},\lambda _{y}\right) \subset \mathrm{Dom}\left(
\varphi \right) ,\quad \mbox{for all
}~x\in \mathrm{Dom}\left( \varphi \right) \cap \overline{B}\left( y,\delta
\right) .
\end{equation*}%
We observe that this condition is fulfilled if, in particular, the domain
\textrm{Dom}$\left( \varphi \right) $ satisfies the \textit{uniform interior
drop condition} (see Definition \ref{drop cond}). It is worth pointing out
that the shifted uniform interior ball condition is comparable with
assumption $\left( 5\right) $ of P--L. Lions, A.S. Sznitman \cite{li-sn/84}
(or \textit{Condition (B)} from \cite{sa/87}) (see the Remarks \ref{remark 2}%
--\ref{remark 3}).\medskip

The application $m\mapsto x:C\left( \left[ 0,T\right] ;\mathbb{R}^{d}\right)
\rightarrow C\left( \left[ 0,T\right] ;\mathbb{R}^{d}\right) $, which
associates to the input function $m$ the solution $x$ of (\ref{GSE intr 2}),
will be proven to be continuous. This allows to derive the existence of a
solution to the associated stochastic equation with additive noise $%
M=(M_{t}) $:
\begin{equation}
\left\{
\begin{array}{l}
X_{t}\left( \omega \right) +K_{t}\left( \omega \right) =\xi \left( \omega
\right) +M_{t}\left( \omega \right) ,\;t\geq 0,\;\omega \in \Omega
,\smallskip \\
dK_{t}\left( \omega \right) \in \partial ^{-}\varphi \left( X_{t}\left(
\omega \right) \right) \left( dt\right) .%
\end{array}%
\right.  \label{GSSE intr 2}
\end{equation}%
After having the existence, the uniqueness and properties for the equations (%
\ref{GSE intr 2}) and (\ref{GSSE intr 2}), we will be able to extend the
study to the more general equations (\ref{GSE intr}) and (\ref{GSSE intr}),
where $f$ (respectively $F$) is continuous, satisfies a one-sided Lipschitz
condition with respect to the second variable and a boundedness
assumption.\medskip

The article is organized as it follows: The next section is devoted to a
recall of such basic notions as those of a as semiconvex set, a $\left( \rho
,\gamma \right) $--semiconvex function or a Fr\'{e}chet subdifferential.
Some notions, like for instance that of a $\left( \rho ,\gamma \right) $%
--semiconvex function, are illustrated by an example. In Section 3 we give
the definition of the solution to the generalized Skorohod problem (\ref{GSE
intr 2}), we prove the existence and the uniqueness of a solution $\left(
x,k\right) $ and we give some useful a priori estimations. Moreover, we
extend equation (\ref{GSE intr 2}) to the stochastic case (\ref{GSSE intr 2}%
). Section 4 is devoted to the proof of the both main results of Section 3.
Finally, the Sections 5 and 6 study the extension of the results established
in Section 3 to the equations (\ref{GSE intr}) and (\ref{GSSE intr}). The
Appendix is devoted to important auxiliary results such as applications of
Fatou's Lemma, some complements concerning tightness in $C\left( \mathbb{R}%
_{+};\mathbb{R}^{d}\right) $ or a very useful forward stochastic inequality,
which are used in our approach.

\section{Preliminaries}

We introduce first some definitions and results concerning the notions of
normal cone, uniform exterior ball conditions, semiconvex sets, $\left( \rho
,\gamma \right) $--semiconvex functions and Fr\'{e}chet subdifferential of a
function.

Here and everywhere below $E$ will be a nonempty closed subset of $\mathbb{R}%
^{d}$. Let $N_{E}\left( x\right) $ be the closed external normal cone of $E$
at $x\in\mathrm{Bd}\left( E\right) $ i.e.%
\begin{equation*}
N_{E}\left( x\right) :=\left\{ u\in\mathbb{R}^{d}:\lim_{\delta\searrow 0}%
\frac{\mathrm{d}_{E}\left( x+\delta u\right) }{\delta}=\left\vert
u\right\vert \right\} ,
\end{equation*}
where $\mathrm{d}_{E}\left( z\right) :=\inf\left\{ \left\vert z-x\right\vert
:x\in E\right\} $ is the distance of a point $z\in \mathbb{R}^{d}$ to $E$.

\begin{definition}
\label{UEBC}Let $r_{0}>0$. We say that $E$ satisfies the $r_{0}$--uniform
exterior ball condition (we write it $r_{0}-UEBC$ for brevity) if%
\begin{equation*}
B\left( x+u,r_{0}\right) \cap E=\emptyset ,\quad \text{where }B\left(
x,r\right) \text{ denotes the ball from }\mathbb{R}^{d}\text{ of centre }x%
\text{ and radius }r
\end{equation*}%
or equivalently if%
\begin{equation*}
N_{E}\left( x\right) \neq \left\{ 0\right\} \quad \text{for all }x\in
\mathrm{Bd}\left( E\right)
\end{equation*}%
and%
\begin{equation*}
\text{for all }x\in \mathrm{Bd}\left( E\right) \text{ and }u\in N_{E}\left(
x\right) \text{ such that }\left\vert u\right\vert =r_{0}\text{ it holds
that }\mathrm{d}_{E}\left( x+u\right) =r_{0}\,.
\end{equation*}
\end{definition}

We remark that for all $v\in N_{E}\left( x\right) $ with $\left\vert
v\right\vert \leq r_{0}$ we also have $\mathrm{d}_{E}\left( x+v\right)
=\left\vert v\right\vert .$

\begin{definition}
\label{semiconvex set}Let $\gamma\geq0.$ A set $E$ is $\gamma$--semiconvex
if for all $x\in\mathrm{Bd}\left( E\right) $ there exists $\hat{x}\in
\mathbb{R}^{d}\mathbb{\smallsetminus}\left\{ 0\right\} $ such that%
\begin{equation*}
\left\langle \hat{x},y-x\right\rangle \leq\gamma\left\vert \hat{x}%
\right\vert \left\vert y-x\right\vert ^{2},\quad\forall y\in E.
\end{equation*}
\end{definition}

We have the following equivalence:

\begin{lemma}[{see \protect\cite[Lemma 6.47]{pa-ra/14}}]
\label{UEBC - semiconvex}Let $r_{0}>0$. The set $E$ satisfies the $%
r_{0}-UEBC $ if and only if $E$ is $\tfrac{1}{2r_{0}}$--semiconvex.
\end{lemma}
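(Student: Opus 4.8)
The plan is to unwind both definitions in terms of the distance function $\mathrm{d}_E$ and the external normal cone, and to show that the geometric "empty exterior ball of radius $r_0$" statement is precisely a quadratic (second-order) separation inequality with constant $\gamma = \frac{1}{2r_0}$. First I would fix $x \in \mathrm{Bd}(E)$ and recall the elementary fact, noted right after Definition \ref{UEBC}, that $r_0$-UEBC implies $\mathrm{d}_E(x+v) = |v|$ for every $v \in N_E(x)$ with $|v| \le r_0$; geometrically this says the open ball $B(x+v, |v|)$ misses $E$, i.e. $|y - (x+v)| \ge |v|$ for all $y \in E$. Expanding the square, $|y-x|^2 - 2\langle v, y-x\rangle + |v|^2 \ge |v|^2$, which rearranges to $\langle v, y-x \rangle \le \tfrac{1}{2}|y-x|^2$. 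Choosing $|v| = r_0$ and writing $\hat x := v \in N_E(x)\setminus\{0\}$, this reads $\langle \hat x, y-x\rangle \le \tfrac{1}{2r_0}|\hat x|\,|y-x|^2$ for all $y \in E$, which is exactly the $\frac{1}{2r_0}$-semiconvexity inequality. So one direction is essentially a completion of the square applied to the equivalent formulation of UEBC already stated in Definition \ref{UEBC}.

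For the converse, suppose $E$ is $\frac{1}{2r_0}$-semiconvex, so for each $x \in \mathrm{Bd}(E)$ there is $\hat x \ne 0$ with $\langle \hat x, y-x\rangle \le \tfrac{1}{2r_0}|\hat x|\,|y-x|^2$ for all $y \in E$. Normalize and set $v := r_0 \hat x / |\hat x|$, so $|v| = r_0$ and $\langle v, y-x\rangle \le \tfrac{1}{2}|y-x|^2$ for all $y \in E$. Reversing the algebra above gives $|y-(x+v)|^2 \ge |v|^2 = r_0^2$ for all $y\in E$, hence $B(x+v,r_0)\cap E = \emptyset$, and moreover $\mathrm{d}_E(x+v) = r_0$ because $y=x$ realizes equality. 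It remains to check that $v$ (more precisely the ray through $v$) lies in $N_E(x)$: from $\mathrm{d}_E(x + \delta v/ r_0 \cdot r_0) $... more cleanly, for $0 < \delta \le 1$ one has $x + \delta v$ at distance $\le |\delta v| = \delta r_0$ from $x\in E$, and the separation inequality with the scaled vector $\delta v$ gives $|y - (x+\delta v)|^2 \ge |y-x|^2 - \delta |y-x|^2 \cdot (|y-x|/r_0 \text{ correction})$... so I would instead argue directly that $\mathrm{d}_E(x+\delta v)/\delta \to |v|/r_0 \cdot r_0$ — concretely, $\mathrm{d}_E(x+\delta v) \le \delta r_0$ trivially, while the quadratic bound yields $\mathrm{d}_E(x+\delta v) \ge \delta r_0 - C\delta^2$ for the relevant range, so the limit equals $r_0 = |v|$, placing $v \in N_E(x)$. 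This establishes $N_E(x) \ne \{0\}$ and the distance-attainment clause, i.e. $r_0$-UEBC.

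The main obstacle I anticipate is the bookkeeping in the converse direction: verifying that the normalized separating vector $v$ genuinely belongs to the normal cone $N_E(x)$ in the sense of the limit definition (not merely that $B(x+v,r_0)$ is exterior), and handling the passage between the "local" quadratic inequality valid for $y \in E$ and the "global" radius-$r_0$ emptiness — one must be slightly careful that the inequality $\langle v, y-x\rangle \le \tfrac12|y-x|^2$ alone already forces emptiness of the full ball $B(x+v,r_0)$ without any a priori smallness restriction on $|y-x|$, which is true here because the quadratic term has the right sign. Since the paper cites \cite[Lemma 6.47]{pa-ra/14} for this equivalence, I would present the above as a self-contained sketch and refer to that source for the routine details of the normal-cone verification.
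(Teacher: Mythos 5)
Your forward direction is fine and is the expected argument: under Definition \ref{UEBC} one may pick $u\in N_{E}(x)$ with $|u|=r_{0}$, use $\mathrm{d}_{E}(x+u)=r_{0}$, expand $|y-(x+u)|^{2}\ge r_{0}^{2}$ and complete the square to get $\langle u,y-x\rangle\le\tfrac{1}{2r_{0}}|u|\,|y-x|^{2}$, i.e.\ $\tfrac{1}{2r_{0}}$--semiconvexity. In the converse, your construction of $v=r_{0}\hat{x}/|\hat{x}|$ and the verification that $v\in N_{E}(x)$ are also sound; in fact the computation is cleaner than you wrote: for $0<\delta\le 1$ the same algebra gives $|y-(x+\delta v)|^{2}\ge(1-\delta)|y-x|^{2}+\delta^{2}r_{0}^{2}$, hence exactly $\mathrm{d}_{E}(x+\delta v)=\delta r_{0}$, so no error term $C\delta^{2}$ is needed.

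The genuine gap is in the last clause of the converse. Definition \ref{UEBC} (see also the remark immediately after it, and the first bullet of Lemma \ref{normal cone}) demands $\mathrm{d}_{E}(x+u)=r_{0}$ for \emph{every} $u\in N_{E}(x)$ with $|u|=r_{0}$, where $N_{E}(x)$ is defined through the limit of $\mathrm{d}_{E}(x+\delta u)/\delta$. Semiconvexity hands you a single separating vector $\hat{x}$ at each boundary point, and your argument establishes the exterior--ball property only for the direction $v$ built from it. But the limit--defined cone $N_{E}(x)$ is a first--order object and may contain other directions (at an outward corner it is a whole wedge), whereas emptiness of $B(x+u,r_{0})$ is a global, radius--$r_{0}$ statement: a piece of $E$ at distance up to $2r_{0}$ from $x$ could a priori invade $B(x+u,r_{0})$ without contradicting the inequality at $x$ in the direction $\hat{x}$. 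Ruling this out genuinely uses the uniformity of the condition over all boundary points (for instance via single--valuedness and Lipschitz continuity of $\pi_{E}$ on $\overline{U}_{\varepsilon}(E)$ and approximation of an arbitrary element of $N_{E}(x)$ by proximal normals $z-\pi_{E}(z)$ at nearby points, then passing to the limit in the quadratic inequality), and this is precisely the nontrivial content of the cited Lemma 6.47 in \cite{pa-ra/14}. So what you have actually proved is the equivalence of $\tfrac{1}{2r_{0}}$--semiconvexity with the \emph{existential} form of the exterior ball condition (at each boundary point there is some exterior ball of radius $r_{0}$); deferring the ``for all $u\in N_{E}(x)$'' clause to the reference as routine bookkeeping leaves the substantive half of the stated equivalence unproved.
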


For a given $z\in \mathbb{R}^{d}$ we denote by $\Pi _{E}\left( z\right) $
the set of elements $x\in E$ such that $\mathrm{d}_{E}\left( z\right)
=\left\vert z-x\right\vert $. Obviously, $\Pi _{E}\left( z\right) $ is non
empty since $E$ is non empty and closed. Moreover, under the $r_{0}$%
--uniform exterior ball condition, it follows that the set $\Pi _{E}\left(
z\right) $ is a singleton for all $z$ such that $\mathrm{d}_{E}\left(
z\right) <r_{0}$. In this case $\pi _{E}\left( z\right) $ will denote the
unique element of $\Pi _{E}\left( z\right) $ and it is called the projection
of $z$ on $E$. We recall the following well--known property of the
projection.

\begin{lemma}
\label{normal cone}Let the $r_{0}-UEBC$ be satisfied, $\varepsilon \in
\left( 0,r_{0}\right) $ and $\overline{U}_{\varepsilon }\left( E\right)
:=\left\{ y\in \mathbb{R}^{d}:\mathrm{d}_{E}\left( y\right) \leq \varepsilon
\right\} $ denoting the closed $\varepsilon $--neighborhood of $E$.

Then:

\begin{itemize}
\item the closed external normal cone of $E$ at $x$ is given by%
\begin{equation*}
N_{E}\left( x\right) =\left\{ \hat{x}:\left\langle \hat{x},y-x\right\rangle
\leq\frac{1}{2r_{0}}\left\vert \hat{x}\right\vert \left\vert y-x\right\vert
^{2},\;\forall y\in E\right\} ;
\end{equation*}

\item the projection $\pi_{E}$ restricted to $\overline{U}_{\varepsilon
}\left( E\right) $ is Lipschitz with Lipschitz constant $L_{\varepsilon
}=r_{0}/\left( r_{0}-\varepsilon\right) $;

and

\item the function $\mathrm{d}_{E}^{2}\left( \cdot\right) $ is of class $%
C^{1}$ on $\overline{U}_{\varepsilon}\left( E\right) $ with%
\begin{equation*}
\frac{1}{2}\nabla\mathrm{d}_{E}^{2}\left( z\right) =z-\pi_{E}\left( z\right)
\quad\text{and}\quad z-\pi_{E}\left( z\right) \in N_{E}\left( \pi_{E}\left(
z\right) \right) ,\;\forall z\in\overline{U}_{\varepsilon }\left( E\right) .
\end{equation*}
\end{itemize}
\end{lemma}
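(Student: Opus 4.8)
The plan is to treat the three bullet points in order, relying on Lemma \ref{UEBC - semiconvex} to translate the $r_0$-UEBC into $\tfrac{1}{2r_0}$-semiconvexity of $E$. For the first bullet, I would prove the two inclusions separately. If $\hat{x}\in N_E(x)$, then (by definition of the normal cone together with the $r_0$-UEBC, as already recorded in the Remark after Definition \ref{UEBC}) for $v=r_0\hat{x}/|\hat{x}|$ we have $\mathrm{d}_E(x+v)=r_0=|v|$, which forces $B(x+v,r_0)\cap E=\emptyset$; expanding $|y-(x+v)|^2\ge r_0^2$ for $y\in E$ and simplifying gives $\langle\hat{x},y-x\rangle\le\tfrac{1}{2r_0}|\hat{x}|\,|y-x|^2$. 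Conversely, if $\hat{x}$ satisfies this quadratic inequality, the same algebra shows $B(x+r_0\hat{x}/|\hat{x}|,r_0)\cap E=\emptyset$, hence $\mathrm{d}_E(x+\delta\hat{x}/|\hat{x}|)=\delta$ for small $\delta$, i.e. $\hat{x}\in N_E(x)$. This is essentially the content of Lemma \ref{UEBC - semiconvex} made pointwise, so it is short.

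For the second bullet (Lipschitz continuity of $\pi_E$ on $\overline{U}_\varepsilon(E)$), I would take $z_1,z_2\in\overline{U}_\varepsilon(E)$, write $x_i=\pi_E(z_i)$, and use the characterization just proved: $z_i-x_i\in N_E(x_i)$ with $|z_i-x_i|=\mathrm{d}_E(z_i)\le\varepsilon<r_0$. Applying the quadratic inequality defining $N_E(x_1)$ at the point $y=x_2\in E$, and the one defining $N_E(x_2)$ at $y=x_1$, and adding, yields
\begin{equation*}
\langle z_1-z_2,\,x_1-x_2\rangle \ge |x_1-x_2|^2 - \frac{1}{2r_0}\big(|z_1-x_1|+|z_2-x_2|\big)|x_1-x_2|^2 \ge \Big(1-\frac{\varepsilon}{r_0}\Big)|x_1-x_2|^2 .
\end{equation*}
Cauchy–Schwarz on the left then gives $|x_1-x_2|\le \frac{r_0}{r_0-\varepsilon}|z_1-z_2|$, which is the claimed Lipschitz constant $L_\varepsilon$.

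For the third bullet, I would first note $\mathrm{d}_E^2(z)=|z-\pi_E(z)|^2$ and that, for $z\in\overline{U}_\varepsilon(E)$ and any $x'\in E$, the elementary inequality $\mathrm{d}_E^2(z)\le|z-x'|^2$ combined with the normal-cone inequality at $\pi_E(z)$ gives the two-sided bound $-C|z'-z|^2 \le \mathrm{d}_E^2(z')-\mathrm{d}_E^2(z)-2\langle z-\pi_E(z),z'-z\rangle \le C|z'-z|^2$ on the neighborhood, so $\mathrm{d}_E^2$ is differentiable with $\tfrac12\nabla\mathrm{d}_E^2(z)=z-\pi_E(z)$; continuity of this gradient then follows from the Lipschitz continuity of $\pi_E$ established in the second bullet, giving the $C^1$ regularity. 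The inclusion $z-\pi_E(z)\in N_E(\pi_E(z))$ is immediate since $|z-\pi_E(z)|=\mathrm{d}_E(z)<r_0$ and the point realizing the distance lies in the normal cone (again via the first bullet). The main obstacle, such as it is, is bookkeeping: making sure the quadratic estimates are applied at admissible points (i.e. that $\pi_E(z_i)\in E$ and that $|z_i-\pi_E(z_i)|<r_0$ throughout) and that the constant $L_\varepsilon$ comes out sharp; there is no deep difficulty, as this is a standard consequence of semiconvexity of $E$.
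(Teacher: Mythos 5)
Your plan is sound and matches the standard argument; note that the paper itself offers no proof of this lemma --- it is recalled as a well-known property of the projection (cf.\ \cite{pa-ra/14}) --- so there is no internal proof to compare against, only the classical one, which is what you reproduce. The one step you should tighten is the membership $z-\pi_{E}\left( z\right) \in N_{E}\left( \pi_{E}\left( z\right) \right) $: you attribute it to ``the first bullet'', but that characterization only translates membership in $N_{E}$ into the quadratic inequality, while the nearest-point property by itself yields $\left\langle z-\pi_{E}\left( z\right) ,y-\pi_{E}\left( z\right) \right\rangle \leq \tfrac{1}{2}\left\vert y-\pi_{E}\left( z\right) \right\vert ^{2}$, i.e.\ the constant $\tfrac{1}{2}$ rather than the needed $\tfrac{\left\vert z-\pi_{E}\left( z\right) \right\vert }{2r_{0}}$, so the first bullet alone does not deliver it. The clean one-line justification comes straight from the definition of $N_{E}$: with $x=\pi_{E}\left( z\right) $ and $t\in \left[ 0,1\right] $ one has $\mathrm{d}_{E}\bigl(x+t\left( z-x\right) \bigr)=t\left\vert z-x\right\vert $ (a closer point of $E$ to $x+t\left( z-x\right) $ would, by the triangle inequality, be closer to $z$ than $x$), hence the limit in the definition equals $\left\vert z-x\right\vert $ and $z-x\in N_{E}\left( x\right) $. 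With that inserted, your three steps --- the pointwise version of Lemma \ref{UEBC - semiconvex} for the cone characterization, the monotonicity estimate giving the Lipschitz constant $r_{0}/\left( r_{0}-\varepsilon \right) $, and the two-sided quadratic bound combined with the Lipschitz continuity of $\pi_{E}$ for the $C^{1}$ regularity of $\mathrm{d}_{E}^{2}$ --- are all correct and in the expected order of dependence.
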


Let us introduce now the notion of \textit{drop of vertex }$x$\textit{\ and
running direction }$v$.

Let $x,v\in \mathbb{R}^{d}$, $r>0.$ The set%
\begin{equation*}
D_{x}\left( v,r\right) :=\mathrm{conv}\left\{ x,\overline{B}\left(
x+v,r\right) \right\} =\left\{ x+t\left( u-x\right) :u\in \overline{B}\left(
x+v,r\right) ,\;t\in \left[ 0,1\right] \right\}
\end{equation*}%
is called $\left( v,r\right) $--drop of vertex $x$ and running direction $v$%
. Remark that if $\left\vert v\right\vert \leq r,$ then $D_{x}\left(
v,r\right) =\overline{B}\left( x+v,r\right) .$

\begin{definition}
\label{drop cond}The set $E\subset\mathbb{R}^{d}$ satisfies the uniform
interior drop condition if there exist $r_{0},h_{0}>0$ such that for all $%
x\in E$ there exists $v_{x}\in\mathbb{R}^{d}$ with $\left\vert
v_{x}\right\vert \leq h_{0}$ and%
\begin{equation*}
D_{x}\left( v_{x},r_{0}\right) \subset E
\end{equation*}
(we also say that $E$ satisfies the uniform interior $\left(
h_{0},r_{0}\right) $--drop condition).
\end{definition}

\begin{remark}
\label{suff cond for drop}It is easy to see that if there exists $r_{0}>0$
such that $E^{c}$ satisfies the the $r_{0}-UEBC$, then $E$ satisfies the
uniform interior $\left( h_{0},r_{0}\right) $--drop condition.$\smallskip $

Indeed let $x\in Bd\left( E^{c}\right) =Bd\left( E\right) $ and $u_{x}\in
N_{E^{c}}\left( x\right) $ with $\left\vert u_{x}\right\vert =r_{0}$.

Then%
\begin{equation*}
D_{x}\left( u_{x},r_{0}/2\right) \subset D_{x}\left( u_{x},r_{0}\right) =%
\overline{B}\left( x+u_{x},r_{0}\right) \subset E.
\end{equation*}
It is easy to see that, for any $x\in int\left( E\right) $, there exists a
direction $v_{x}\in\mathbb{R}^{d}$ such that $\left\vert v_{x}\right\vert
\leq r_{0}$ and $D_{x}\left( v_{x},r_{0}/2\right) \subset E.$
\end{remark}

We state below that the drop condition implies a weaker condition, but is
not equivalent with this (for the proof see Proposition 4.35 in \cite%
{pa-ra/14}).

\begin{proposition}
\label{cond for SUIBC}Let the set $E$ be as above with $\mathrm{Int}\left(
E\right) \neq \emptyset $. If set $E$ satisfies the uniform interior $\left(
h_{0},r_{0}\right) $--drop condition then $E$ \textit{satisfies the shifted
uniform interior ball condition, which means that there exist }$\gamma \geq
0 $ and $\delta ,\sigma >0$\textit{, and for every }$y\in E$\textit{\ there
exist }$\lambda _{y}\in (0,1]$\textit{\ and }$v_{y}\in \mathbb{R}^{d},$%
\textit{\ }$\left\vert v_{y}\right\vert \leq 1$\textit{\ such that}%
\begin{equation}
\begin{array}{rl}
\left( i\right) & \lambda _{y}-\left( \left\vert v_{y}\right\vert +\lambda
_{y}\right) ^{2}\gamma \geq \sigma ,\smallskip \\
\left( ii\right) & \overline{B}\left( x+v_{y},\lambda _{y}\right) \subset
E,\quad \forall ~x\in E\cap \overline{B}\left( y,\delta \right)%
\end{array}
\label{SUIBC cond}
\end{equation}%
(this condition will be called $\left( \gamma ,\delta ,\sigma \right) $--%
\textit{SUIBC}).
\end{proposition}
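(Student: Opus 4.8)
The plan is to extract, for each $y\in E$, a single ball $\overline{B}(x+v_y,\lambda_y)$ that sits inside $E$ uniformly over all $x$ in a $\delta$-neighborhood of $y$, starting from the drop $D_x(v_x,r_0)\subset E$ available at every point $x\in E$. The key geometric observation is that a drop of vertex $x$ and running direction $v_x$ with $|v_x|\le h_0$ contains, for every nearby vertex $x'$, a shifted ball: if $|x'-x|$ is small compared with $r_0$, then the segment from $x'$ to the far ball $\overline{B}(x+v_x,r_0)$ still lies in $D_x(v_x,r_0)\subset E$, and by a convexity/similar-triangles argument this cone over $\overline{B}(x+v_x,r_0)$ with apex $x'$ contains a full ball $\overline{B}(x'+w,\rho)$ whose center-shift $w$ is comparable to $v_x$ and whose radius $\rho$ is a fixed fraction of $r_0$ depending only on $h_0,r_0$ and the chosen $\delta$. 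Thus I would first fix $y$, take $v:=v_y$ and $r_0$ from the drop at $y$, choose $\delta>0$ small (quantitatively, something like $\delta\le c\,r_0$ with $c$ depending on $h_0/r_0$), and show that for all $x\in E\cap\overline{B}(y,\delta)$ we have $\overline{B}(x+v_y,\lambda_y)\subset D_y(v_y,r_0)\subset E$, where $\lambda_y$ is a suitably shrunk radius and $v_y$ a rescaled version of the original running direction so that $|v_y|\le 1$.

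Concretely, after normalizing, one sets $v_y$ to be $v_y = t\,(\text{original direction})$ for an appropriate scalar $t\le 1$ and $\lambda_y$ proportional to $t\,r_0$, so that the ball $\overline{B}(y+v_y,\lambda_y)$ is a scaled-down copy of $\overline{B}(y+v\,,r_0)$ lying deep inside the drop, away from its lateral boundary by a definite margin. The margin ensures that moving the apex from $y$ to any $x$ with $|x-y|\le\delta$ only perturbs this inner ball by at most $\delta$ times a bounded factor, hence it remains inside $D_x(v_x,r_0)$... actually, more cleanly, it remains inside the fixed set $D_y(v_y,r_0)$, which is already contained in $E$; so the containment $\overline{B}(x+v_y,\lambda_y)\subset E$ follows for all such $x$ by a triangle inequality once $\delta$ is small relative to the inner margin. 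Finally, with $\gamma$ the semiconvexity constant of $E$ (which is $\tfrac{1}{2r_0}$ by Lemma \ref{UEBC - semiconvex} if one is invoking the exterior ball condition, but here $\gamma$ is simply whatever constant appears in the ambient hypotheses), one checks condition $(i)$: since $|v_y|\le 1$ and $\lambda_y\le 1$, the quantity $\lambda_y-(|v_y|+\lambda_y)^2\gamma$ can be made $\ge\sigma>0$ by choosing the scaling $t$ small enough that $\lambda_y$ dominates the quadratic correction — this is where one uses that $\gamma$ is a \emph{fixed} constant while $\lambda_y$ and $|v_y|$ scale together, so shrinking $t$ decreases $(|v_y|+\lambda_y)^2\gamma$ quadratically but $\lambda_y$ only linearly, making the difference eventually positive and bounded below by a uniform $\sigma$.

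The main obstacle, and the point requiring the most care, is keeping all three constants $\gamma,\delta,\sigma$ \emph{uniform in $y$}. The drop condition already gives $r_0,h_0$ independent of the point, so the geometry of each drop is uniformly controlled; the danger is that the rescaling needed to enforce $|v_y|\le 1$ and the positivity in $(i)$ might a priori depend on $y$. I would handle this by performing the scaling once, in terms of $r_0,h_0,\gamma$ only: choose a single $t\in(0,1]$ and a single $\delta>0$ valid for all $y$, then set $\lambda_y$ and $v_y$ by the same formulas at every $y$, so that $\sigma:=\lambda-(|v|+\lambda)^2\gamma$ (with the common values $\lambda,|v|$) is one fixed positive number. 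One also has to treat interior points $y$ — where the drop may degenerate or where $v_y$ can be taken essentially arbitrary of small norm — as in Remark \ref{suff cond for drop}: for $y\in\mathrm{Int}(E)$ one picks any direction with a small interior ball, and since $\mathrm{Int}(E)\ne\emptyset$ is assumed, this is consistent with the boundary construction. Assembling these pieces yields $(\gamma,\delta,\sigma)$–SUIBC with constants depending only on the drop data, completing the proof. $\square$
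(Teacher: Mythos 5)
Your strategy is sound, and in fact the paper gives no in-text proof of Proposition \ref{cond for SUIBC}: it refers to Proposition 4.35 of \cite{pa-ra/14}, whose argument is essentially the one you sketch. Namely, take the drop $D_y(v,r_0)\subset E$ at $y$, note that its slice at parameter $t\in(0,1]$ is exactly the ball $\overline{B}(y+tv,t r_0)$, shrink the radius to create a margin, use the triangle inequality $\overline{B}(x+tv,\lambda)\subset\overline{B}(y+tv,\lambda+\delta)\subset\overline{B}(y+tv,t r_0)$ for $|x-y|\le\delta$, and finally choose $t$ (hence $\lambda$, $\delta$) small, depending only on $h_0,r_0$ and the fixed $\gamma$, so that $\lambda-(t h_0+\lambda)^2\gamma\ge\sigma>0$ uniformly; treating $\gamma$ as the given ambient constant, as you do, is precisely what assumption (\ref{assumpt phi 3}) requires. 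Two points to clean up in a write-up. First, your opening observation, that the cone with apex at a nearby point $x'$ over $\overline{B}(x+v_x,r_0)$ still lies in $E$, is false in general: only the drop with apex $x$ is known to be contained in $E$. You correctly abandon this in favour of keeping the perturbed ball inside the fixed drop at $y$, and only that version should appear. Second, no separate treatment of interior points is needed or wanted: Definition \ref{drop cond} provides a drop with the uniform constants $(h_0,r_0)$ at every $y\in E$, interior or boundary, and replacing it at interior points by ``any direction with a small interior ball'' would threaten the uniformity of $\lambda_y$ and $\sigma$ near the boundary; the hypothesis $\mathrm{Int}(E)\neq\emptyset$ is not there to split cases (it is automatic from the drop condition). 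With these repairs your outline converts directly into a complete proof.
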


\begin{example}
\label{ex for SUIBC}Let $E$ be a set for which there exists a function $\phi
\in C_{b}^{2}\left( \mathbb{R}^{d}\right) $ such that%
\begin{equation*}
\begin{array}{rl}
\left( i\right) & E=\left\{ x\in \mathbb{R}^{d}:\phi \left( x\right) \leq
0\right\} ,\smallskip \\
\left( ii\right) & \mathrm{Int}\left( E\right) =\left\{ x\in \mathbb{R}%
^{d}:\phi \left( x\right) <0\right\} ,\smallskip \\
\left( iii\right) & \mathrm{Bd}\left( E\right) =\left\{ x\in \mathbb{R}%
^{d}:\phi \left( x\right) =0\right\} \;\text{and }\left\vert \nabla \phi
\left( x\right) \right\vert =1,\;\forall ~x\in \mathrm{Bd}\left( E\right) .%
\end{array}%
\end{equation*}%
Then the set $E$ satisfies the uniform exterior ball condition and the
uniform interior drop condition.$\smallskip $

Indeed, using the definition of $E$ we see that, for $x\in\mathrm{Bd}\left(
E\right) ,$ the gradient $\nabla\phi\left( x\right) $ is a unit normal
vector to the boundary, pointing towards the exterior of $E$. Therefore, for
any $x\in\mathrm{Bd}\left( E\right) $, the normal cone is given by $%
N_{E}\left( x\right) =\left\{ c\nabla\phi\left( x\right) :c\geq 0\right\} $
and $N_{E^{c}}\left( x\right) =\left\{ -c\nabla\phi\left( x\right)
:c\geq0\right\} .$

Since $\phi \left( y\right) \leq 0=\phi \left( x\right) $, for all $y\in E$,
$x\in \mathrm{Bd}\left( E\right) ,$%
\begin{align*}
\left\langle \nabla \phi \left( x\right) ,y-x\right\rangle & =\phi \left(
y\right) -\phi \left( x\right) -\int_{0}^{1}\left\langle \nabla \phi \left(
x+\lambda \left( y-x\right) \right) -\nabla \phi \left( x\right)
,y-x\right\rangle d\lambda \\
& \leq M\left\vert y-x\right\vert ^{2}=M\left\vert \nabla \phi \left(
x\right) \right\vert \left\vert y-x\right\vert ^{2},
\end{align*}%
which means, using Definition \ref{semiconvex set} and Lemma \ref{UEBC -
semiconvex}, that $E$ satisfies $\frac{1}{2M}-UEBC$.

Since $\phi \left( y\right) \geq 0=\phi \left( x\right) $, for all $y\in
E^{c}$, $x\in \mathrm{Bd}\left( E\right) ,$%
\begin{align*}
\left\langle -\nabla \phi \left( x\right) ,y-x\right\rangle & =-\phi \left(
y\right) +\int_{0}^{1}\left\langle \nabla \phi \left( x+\lambda \left(
y-x\right) \right) -\nabla \phi \left( x\right) ,y-x\right\rangle d\lambda \\
& \leq M\left\vert y-x\right\vert ^{2}=M\left\vert -\nabla \phi \left(
x\right) \right\vert \left\vert y-x\right\vert ^{2},
\end{align*}%
which yields that $E^{c}$ satisfies $\frac{1}{2M}-UEBC$ and consequently
(see Remark \ref{suff cond for drop}) $E$ satisfies the uniform interior
drop condition.
\end{example}

If $E$ denotes a closed subset of $\mathbb{R}^{d}$ let $E_{\varepsilon}$ be
the $\varepsilon$--interior of $E$, i.e.%
\begin{equation*}
E_{\varepsilon}:=\left\{ x\in E:\mathrm{d}_{E^{c}}\left( x\right)
\geq\varepsilon\right\} .
\end{equation*}

\begin{example}
Let $E\subset \mathbb{R}^{d}$ be a closed convex set with nonempty interior.
If there exists $r_{0}>0$ such that$\;$(the $r_{0}$--interior of $E$) $%
E_{r_{0}}\neq \emptyset $ and $h_{0}=\sup_{z\in E}\mathrm{d}%
_{E_{r_{0}}}\left( z\right) <\infty $ (in particular if $E$ is a bounded
closed convex set with nonempty interior), then $E$ satisfies the uniform
interior $\left( h_{0},r_{0}\right) $--drop condition.

Moreover for every $0<\delta\leq\frac{r_{0}}{2\left( 1+h_{0}\right) }\wedge1$%
, $E$ satisfies $\left( \gamma,\delta,\sigma\right) $--\textit{SUIBC }with $%
\lambda_{y}=\sigma=\delta.\smallskip$

For the proof, let $y\in E,$ $\hat{y}$ the projection of $y$ on the set $%
E_{r_{0}}$ and $v_{y}=\frac{1}{1+h_{0}}\left( \hat{y}-y\right) $. Hence $%
\left\vert \hat{y}-y\right\vert \leq h_{0}$, $\left\vert v_{y}\right\vert
\leq1$ and for all $x\in E\cap\overline{B}\left( y,\delta\right) $%
\begin{equation*}
\overline{B}\left( x+v_{y},\delta\right) \subset\overline{B}\Big(y+v_{y},%
\frac{r_{0}}{1+h_{0}}\Big)\subset\mathrm{conv}\left\{ y,\overline{B}\left(
\hat{y},r_{0}\right) \right\} =D_{y}\left( \hat{y}-y,r_{0}\right) \subset E.
\end{equation*}
$\smallskip$
\end{example}

Let $\varphi :$ $\mathbb{R}^{d}\rightarrow (-\infty ,+\infty ]$ be a
function with domain defined by%
\begin{equation*}
\mathrm{Dom}\left( \varphi \right) :=\left\{ v\in \mathbb{R}^{d}:\varphi
\left( v\right) <+\infty \right\} .
\end{equation*}%
We recall now the definition of the Fr\'{e}chet subdifferential (for this
kind of subdifferential operator see, e.g., \cite{ma-to/90} and the
monograph \cite{ro-we/97}, cap. VIII):

\begin{definition}
The Fr\'{e}chet subdifferential $\partial^{-}\varphi$ is defined by:

$\mathrm{a}_{\mathrm{1}}\mathrm{)}\;\partial^{-}\varphi\left( x\right)
=\emptyset,\;$if $x\notin\mathrm{Dom}\left( \varphi\right) \mathrm{\ }$and

$\mathrm{a}_{\mathrm{2}}\mathrm{)}\;$for $x\in\mathrm{Dom}\left(
\varphi\right) ,$%
\begin{equation*}
\partial^{-}\varphi\left( x\right) =\{\hat{x}\in\mathbb{R}^{d}:\underset{%
y\rightarrow x}{\lim\inf}\dfrac{\varphi\left( y\right) -\varphi\left(
x\right) -\left\langle \hat{x},y-x\right\rangle }{\left\vert y-x\right\vert }%
\geq0\}.
\end{equation*}
\end{definition}

Taking into account this definition we will say that $\varphi $ is proper if
the domain $\mathrm{Dom}\left( \varphi \right) \neq \emptyset $ and has no
isolated points.

We set%
\begin{equation*}
\begin{array}{l}
\mathrm{Dom}\left( \partial ^{-}\varphi \right) =\left\{ x\in \mathbb{R}%
^{d}:\partial ^{-}\varphi \left( x\right) \neq \emptyset \right\} ,\smallskip
\\
\partial ^{-}\varphi =\left\{ \left( x,\hat{x}\right) :\;x\in \mathrm{Dom}%
\left( \partial ^{-}\varphi \right) ,\;\hat{x}\in \partial ^{-}\varphi
\left( x\right) \right\} .%
\end{array}%
\end{equation*}%
In the particular case of the indicator function of the closed set $E$,%
\begin{equation*}
\varphi \left( x\right) =I_{E}\left( x\right) :=\left\{
\begin{array}{cc}
0, & \text{if }x\in E,\smallskip \\
+\infty , & \text{if }x\notin E,%
\end{array}%
\right.
\end{equation*}%
the function $\varphi $ is lower semicontinuous and the Fr\'{e}chet
subdifferential becomes%
\begin{equation*}
\partial ^{-}I_{E}\left( x\right) =\Big\{\hat{x}\in \mathbb{R}%
^{d}:\limsup_{y\rightarrow x,~y\in E}\dfrac{\left\langle \hat{x}%
,y-x\right\rangle }{\left\vert y-x\right\vert }\leq 0\Big\}.
\end{equation*}%
Moreover, in this particular case we deduce that, for any closed subset $E$
of a Hilbert space,%
\begin{equation}
\partial ^{-}I_{E}\left( x\right) =N_{E}\left( x\right)
\label{particular Frechet subdifferential}
\end{equation}%
(for the proof see Colombo and Goncharov \cite{co-go/01}).

\begin{definition}
\label{semiconvex function}Let $\rho,\gamma\geq0$. The function $\varphi :%
\mathbb{R}^{d}\rightarrow(-\infty,+\infty]$ is said to be $\left(
\rho,\gamma\right) $--semiconvex function if$\smallskip$

$\mathrm{a}_{\mathrm{1}}\mathrm{)\;}\overline{\mathrm{Dom}\left(
\varphi\right) }$ is $\gamma$--semiconvex,$\smallskip$

$\mathrm{a}_{\mathrm{2}}\mathrm{)\;Dom}\left( \partial^{-}\varphi\right)
\neq\emptyset,\smallskip$

$\mathrm{a}_{\mathrm{3}}\mathrm{)\;}$For all $\left( x,\hat{x}\right) \in
\partial ^{-}\varphi $ and $y\in \mathbb{R}^{d}$:%
\begin{equation*}
\left\langle \hat{x},y-x\right\rangle +\varphi \left( x\right) \leq \varphi
\left( y\right) +\left( \rho +\gamma \left\vert \hat{x}\right\vert \right)
\left\vert y-x\right\vert ^{2}.
\end{equation*}
\end{definition}

\begin{remark}
Let $E$ be a nonempty closed subset of $\mathbb{R}^{d}.$ We have:

\begin{enumerate}
\item $I_{E}$ is $\left( 0,\gamma \right) $--semiconvex iff $E$ is $\gamma $%
--semiconvex (see (\ref{particular Frechet subdifferential}) and Definitions %
\ref{semiconvex set} and \ref{semiconvex function}; we also mention that in
the definition of $\gamma $--semiconvexity we can take $x\in E$, but in this
case $\hat{x}$ should be taken $0$).

\item $I_{E}$ is $\left( 0,\gamma \right) $--semiconvex (or, see \cite[%
Definition 4.1]{ma-to/90}, $\phi $--convex of order $r=1$ with $\phi
(x,y,\varphi \left( x\right) $, $\varphi \left( y\right) ,\alpha )=\gamma
\left\vert \alpha \right\vert $) iff there exists $\delta ,\mu >0$ such that
$x\mapsto d_{E}\left( x\right) +\mu \left\vert x\right\vert ^{2}$ is convex
on $B\left( y,\delta \right) $, for any $y\in E$ (see \cite[Lemma 6.47]%
{pa-ra/14}).

\item A convex function is also $\left( \rho ,\gamma \right) $--semiconvex
function, for any $\rho ,\gamma \geq 0$ (see Definition \ref{semiconvex
function} and the definition of the subdifferential of a convex function).

\item If $E$ is convex, then $E$ is $\gamma $--semiconvex for any $\gamma
\geq 0$ (see the supporting hyperplane Theorem 4.1.6 from \cite{bo-le/06}).

\item If $E$ has nonempty interior and is $0$--semiconvex, then $E$ is
convex (see Definition \ref{semiconvex set} and \cite[Exercise 2.27]%
{bo-va/04}).

\item If $\varphi :\mathbb{R}^{d}\rightarrow (-\infty ,+\infty ]$ is a $%
\left( \rho ,\gamma \right) $--semiconvex function, then there exists $%
a,b\in \mathbb{R}_{+}$ and $c\in \mathbb{R}$ such that%
\begin{equation*}
\varphi \left( y\right) +a\left\vert y\right\vert ^{2}+b\left\vert
y\right\vert ^{2}+c\geq 0,\quad \text{for all }y\in \mathbb{R}^{d}.
\end{equation*}%
Indeed, by Definition \ref{semiconvex function}, we have, for a fixed $%
\left( x_{0},\hat{x}_{0}\right) \in \partial ^{-}\varphi :a=\rho +\gamma
\left\vert \hat{x}_{0}\right\vert $, $b=2a\left\vert x_{0}\right\vert
+\left\vert \hat{x}_{0}\right\vert $ and $c=a\left\vert x_{0}\right\vert
^{2}+\left\langle \hat{x}_{0},x_{0}\right\rangle -\varphi \left(
x_{0}\right) $.
\end{enumerate}
\end{remark}

\begin{example}
\label{example_semiconvex}If the bounded set $E$ satisfy the $r_{0}$--$UEBC$
and $g\in C^{1}\left( \mathbb{R}^{d}\right) $ (or $g:\mathbb{R}%
^{d}\rightarrow \mathbb{R}$ is a convex function), then $\varphi :\mathbb{R}%
^{d}\rightarrow (-\infty ,+\infty ]$ given by%
\begin{equation*}
\varphi \left( x\right) :=I_{E}\left( x\right) +g\left( x\right)
\end{equation*}%
is a lower semicontinuous and $\big(\frac{L}{2r_{0}},\frac{1}{2r_{0}}\big)$%
--semiconvex function, where $\left\vert \nabla g\left( x\right) \right\vert
\leq L$, for any $x\in E$ (or $\left\vert \partial g\left( x\right)
\right\vert \leq L$, for any $x\in E$). Moreover%
\begin{equation*}
\left\vert \varphi \left( x\right) -\varphi \left( y\right) \right\vert \leq
L\left\vert x-y\right\vert ,\;\forall \;x,y\in \mathrm{Dom}\left( \varphi
\right) =E.
\end{equation*}
\end{example}

In order to define the solution for the deterministic problem envisaged by
our work it is necessary to introduce the bounded variation function space.

Let $T>0$, $k:\left[ 0,T\right] \rightarrow\mathbb{R}^{d}$ and $\mathcal{D}$
be the set of the partitions of the interval $\left[ 0,T\right] $.

Set%
\begin{equation*}
S_{\Delta}(k)=\sum\limits_{i=0}^{n-1}|k(t_{i+1})-k(t_{i})|
\end{equation*}
and%
\begin{equation}
\left\updownarrow k\right\updownarrow _{T}:=\sup\limits_{\Delta\in\mathcal{D}%
}S_{\Delta}(k),  \label{def total var}
\end{equation}
where $\Delta:0=t_{0}<t_{1}<\cdots<t_{n}=T$.

Write%
\begin{equation*}
BV(\left[ 0,T\right] ;\mathbb{R}^{d})=\{k:\left[ 0,T\right] \rightarrow
\mathbb{R}^{d}:\left\updownarrow k\right\updownarrow _{T}<\infty\}.
\end{equation*}
The space $BV(\left[ 0,T\right] ;\mathbb{R}^{d})$ equipped with the norm $%
\left\vert \left\vert k\right\vert \right\vert _{BV(\left[ 0,T\right] ;%
\mathbb{R}^{d})}:=\left\vert k\left( 0\right) \right\vert +\left\updownarrow
k\right\updownarrow _{T}$ is a Banach space.

Moreover, we have the duality%
\begin{equation*}
\big(C(\left[ 0,T\right] ;\mathbb{R}^{d})\big)^{\ast }=\{k\in BV(\left[ 0,T%
\right] ;\mathbb{R}^{d}):k(0)=0\}
\end{equation*}%
given by the Riemann--Stieltjes integral
\begin{equation*}
\left( y,k\right) \mapsto \int_{0}^{T}\left\langle y\left( t\right)
,dk\left( t\right) \right\rangle .
\end{equation*}%
We will say that a function $k\in BV_{loc}(\mathbb{R}_{+};\mathbb{R}^{d})$
if, for every $T>0$, $k\in BV(\left[ 0,T\right] ;\mathbb{R}^{d})$.

\section{Generalized Skorohod problem}

The aim of this section is to prove the existence and uniqueness result for
the following deterministic Cauchy type differential equation:%
\begin{equation}
\left\{
\begin{array}{l}
dx\left( t\right) +\partial^{-}\varphi\left( x\left( t\right) \right) \left(
dt\right) \ni dm\left( t\right) ,\quad t>0,\smallskip \\
x\left( 0\right) =x_{0}~,%
\end{array}
\right.  \label{formal GSP}
\end{equation}
where%
\begin{equation}
\begin{array}{rl}
\left( i\right) & x_{0}\in\overline{\mathrm{Dom}\left( \varphi\right) }%
,\smallskip \\
\left( ii\right) & m\in C\left( \mathbb{R}_{+};\mathbb{R}^{d}\right)
,~m\left( 0\right) =0,%
\end{array}
\label{assumpt input}
\end{equation}
and%
\begin{equation}
\begin{array}{r}
\varphi:\mathbb{R}^{d}\rightarrow(-\infty,+\infty]\text{ is a proper lower
semicontinuous}\smallskip \\
\text{and }\left( \rho,\gamma\right) \text{--semiconvex function.}%
\end{array}
\label{assumpt phi}
\end{equation}

\begin{definition}[Generalized Skorohod problem]
\label{sol GSP}A pair $\left( x,k\right) $ of continuous functions $x,k:%
\mathbb{R}_{+}\rightarrow $ $\mathbb{R}^{d}$, is a solution of equation (\ref%
{formal GSP}) if%
\begin{equation}
\begin{array}{rl}
\left( j\right) & x\left( t\right) \in \overline{\mathrm{Dom}\left( \varphi
\right) },\text{ \thinspace }\forall \,t\geq 0\text{ and }\varphi \left(
x\left( \cdot \right) \right) \in L_{loc}^{1}\left( \mathbb{R}_{+}\right)
,\smallskip \\
\left( jj\right) & k\in BV_{loc}\left( \mathbb{R}_{+};\mathbb{R}^{d}\right) ,%
\text{\ }k\left( 0\right) =0\text{,}\smallskip \\
\left( jjj\right) & x\left( t\right) +k\left( t\right) =x_{0}+m\left(
t\right) ,\text{\ }\forall \text{\ }t\geq 0,\smallskip \\
\left( jv\right) & \forall \,0\leq s\leq t,\;\forall y:\mathbb{R}%
_{+}\rightarrow \mathbb{R}^{d}\text{ continuous:}\smallskip \\
& \multicolumn{1}{r}{\displaystyle\int_{s}^{t}\left\langle y\left( r\right)
-x\left( r\right) ,dk\left( r\right) \right\rangle +\int_{s}^{t}\varphi
\left( x\left( r\right) \right) dr\leq \int_{s}^{t}\varphi \left( y\left(
r\right) \right) dr\smallskip} \\
& \multicolumn{1}{r}{+\displaystyle\int_{s}^{t}\left\vert y\left( r\right)
-x\left( r\right) \right\vert ^{2}\left( \rho dr+\gamma d\left\updownarrow
k\right\updownarrow _{r}\right) .}%
\end{array}
\label{def sol GSP}
\end{equation}
\end{definition}

In this case the pair $\left( x,k\right) $ is said to be the solution of the
generalized Skorohod problem $\left( \partial^{-}\varphi;x_{0},m\right) $
(denoted by $\left( x,k\right) =\mathcal{SP}\left(
\partial^{-}\varphi;x_{0},m\right) \,$).

If $\varphi=I_{E}$ then $\partial^{-}\varphi=N_{E}$ and we say that $\left(
x,k\right) $ is solution of the Skorohod problem $\left( E;x_{0},m\right) $
and we write $\left( x,k\right) =\mathcal{SP}\left( E;x_{0},m\right) .$

\begin{remark}
\label{notation_apartenance}The notation%
\begin{equation*}
dk\left( t\right) \in \partial ^{-}\varphi \left( x\left( t\right) \right)
\left( dt\right)
\end{equation*}%
means that $x,k:\mathbb{R}_{+}\rightarrow \mathbb{R}^{d}$ are continuous
functions satisfying conditions (\ref{def sol GSP}$-j,\ jj,\ jv$).
\end{remark}

The next result provides an equivalent condition for (\ref{def sol GSP}$-jv$%
) and will be used later in the proof of the continuity of the mapping $%
\left( x_{0},m\right) \mapsto \left( x,k\right) =\mathcal{SP}\left( \partial
^{-}\varphi ;x_{0},m\right) $ and for the main existence result in the
stochastic case.

\begin{lemma}
\label{echiv for (jv)}We suppose that $\varphi$ satisfies assumption (\ref%
{assumpt phi}) and let $x,k:\mathbb{R}_{+}\rightarrow\mathbb{R}^{d}$ be two
continuous functions satisfying (\ref{def sol GSP}$-j,\ jj$). Then the pair $%
\left( x,k\right) $ satisfies (\ref{def sol GSP}$-jv$) if and only if there
exists a continuous increasing function $A:\mathbb{R}_{+}\rightarrow \mathbb{%
R}_{+}$, such that
\begin{equation}
\begin{array}{ll}
\left( jv^{\prime}\right) & \forall\,0\leq s\leq t,\;\forall y:\mathbb{R}%
_{+}\rightarrow\mathbb{R}^{d}\text{ continuous:}\smallskip \\
\multicolumn{1}{r}{} & \multicolumn{1}{r}{\displaystyle\int_{s}^{t}\left%
\langle y\left( r\right) -x\left( r\right) ,dk\left( r\right) \right\rangle
+\int_{s}^{t}\varphi\left( x\left( r\right) \right)
dr\leq\int_{s}^{t}\varphi\left( y\left( r\right) \right) dr\smallskip} \\
\multicolumn{1}{r}{} & \multicolumn{1}{r}{+\displaystyle\int_{s}^{t}\left%
\vert y\left( r\right) -x\left( r\right) \right\vert ^{2}dA_{r}~.}%
\end{array}
\label{def sol GSP - jv}
\end{equation}
\end{lemma}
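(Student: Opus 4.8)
The plan is to prove the two implications separately; $(jv)\Rightarrow(jv^{\prime})$ is immediate and $(jv^{\prime})\Rightarrow(jv)$ is the substantial part. For the first, I would take $A_{r}:=\rho r+\gamma\left\updownarrow k\right\updownarrow_{r}$: since $k$ is continuous and of locally bounded variation, its variation function $r\mapsto\left\updownarrow k\right\updownarrow_{r}$ is continuous and nondecreasing, hence $A$ is continuous and increasing, and with this choice $(jv^{\prime})$ becomes literally $(jv)$.

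For $(jv^{\prime})\Rightarrow(jv)$, fix $T>0$, work on $[0,T]$, and let $\lambda$ be Lebesgue measure and $\mu$ the total-variation measure of $k$ (non-atomic, since $k$ is continuous), so that $\mu([0,t])=\left\updownarrow k\right\updownarrow_{t}$. Introduce the dominating measure $\nu:=\lambda+\mu+dA$ and the Radon--Nikodym densities $dk=g\,d\nu$, $d\lambda=\alpha\,d\nu$, $d\mu=\beta\,d\nu$, $dA=\eta\,d\nu$, so that $\alpha,\beta,\eta\ge0$ with $\alpha+\beta+\eta=1$ and $|g|=\beta$ $\nu$-a.e.\ (in particular $\eta\le1$). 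The aim is the pointwise estimate, valid for $\nu$-a.e.\ $r$ and every $z\in\overline{\mathrm{Dom}(\varphi)}$,
\[
\langle g(r),z-x(r)\rangle+\alpha(r)\varphi(x(r))\le\alpha(r)\varphi(z)+\bigl(\rho\alpha(r)+\gamma\beta(r)\bigr)\,|z-x(r)|^{2}.
\]
Granting this, I substitute $z=y(r)$, integrate over $[s,t]$ against $\nu$, and use $\int_{s}^{t}\alpha\varphi(x(r))\,d\nu=\int_{s}^{t}\varphi(x(r))\,dr$, $\int_{s}^{t}\rho\alpha\,|y-x|^{2}\,d\nu=\rho\int_{s}^{t}|y-x|^{2}\,dr$ and $\int_{s}^{t}\gamma\beta\,|y-x|^{2}\,d\nu=\gamma\int_{s}^{t}|y-x|^{2}\,d\!\left\updownarrow k\right\updownarrow_{r}$ to recover $(jv)$. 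The substitution $z=y(r)$ is legitimate since if $\int_{s}^{t}\varphi(y(r))\,dr=+\infty$ then $(jv)$ is trivial, while otherwise $\{r:y(r)\notin\overline{\mathrm{Dom}(\varphi)}\}$ has zero Lebesgue measure and, being open, is empty, so $y(r)\in\overline{\mathrm{Dom}(\varphi)}$ for all $r$.

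To prove the pointwise estimate I would test $(jv^{\prime})$ on shrinking intervals $[r_{0}-\delta,r_{0}+\delta]$ against constant functions $y\equiv z$, divide by $\nu([r_{0}-\delta,r_{0}+\delta])$ and let $\delta\downarrow0$; since all integrands are $\nu$-integrable (using $\varphi(x(\cdot))\in L^{1}_{loc}$ and $\eta\le1$), Lebesgue's differentiation theorem yields, for $\nu$-a.e.\ $r_{0}$,
\[
\langle g(r_{0}),z-x(r_{0})\rangle+\alpha(r_{0})\varphi(x(r_{0}))\le\alpha(r_{0})\varphi(z)+\eta(r_{0})\,|z-x(r_{0})|^{2}.
\]
Letting $z$ range over a countable set dense in $\mathrm{Dom}(\varphi)$ fixes one $\nu$-null exceptional set. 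On $\{\alpha>0\}$, dividing by $\alpha(r_{0})$ and sending $z\to x(r_{0})$ one concludes (here the lower semicontinuity of $\varphi$ enters) that $g(r_{0})/\alpha(r_{0})\in\partial^{-}\varphi(x(r_{0}))$, and then assumption $(\mathrm{a}_{3})$ of Definition \ref{semiconvex function}, applied to the pair $(x(r_{0}),g(r_{0})/\alpha(r_{0}))$ and multiplied by $\alpha(r_{0})$, gives the estimate for all $z$. On $\{\alpha=0\}$ the $\varphi$-terms drop out and one obtains $\langle g(r_{0}),z-x(r_{0})\rangle\le|z-x(r_{0})|^{2}$ for $z$ in the dense set; sending $z\to x(r_{0})$ and using (\ref{particular Frechet subdifferential}) gives $g(r_{0})\in N_{\overline{\mathrm{Dom}(\varphi)}}(x(r_{0}))$, whence Lemma \ref{normal cone}, applicable because $\overline{\mathrm{Dom}(\varphi)}$ is $\gamma$-semiconvex and hence satisfies the $\tfrac{1}{2\gamma}$-UEBC, upgrades this to $\langle g(r_{0}),z-x(r_{0})\rangle\le\gamma|g(r_{0})|\,|z-x(r_{0})|^{2}=\gamma\beta(r_{0})\,|z-x(r_{0})|^{2}$ for all $z\in\overline{\mathrm{Dom}(\varphi)}$, which is the claimed estimate since $\alpha(r_{0})=0$.

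The main obstacle is precisely this passage from the integral inequality $(jv^{\prime})$ to the pointwise membership $g/\alpha\in\partial^{-}\varphi(x)$ on the absolutely continuous part and $g\in N_{\overline{\mathrm{Dom}(\varphi)}}(x)$ on the singular part: one needs careful measure-theoretic bookkeeping (a common $\nu$-null exceptional set for the whole countable family of test values, and $\nu$-integrability of every term) and, more delicately, control of the limit $z\to x(r_{0})$ through a countable dense set using the lower semicontinuity of $\varphi$; it is the $(\rho,\gamma)$-semiconvexity itself --- through $(\mathrm{a}_{3})$ on the absolutely continuous part and through the $\gamma$-semiconvexity of $\overline{\mathrm{Dom}(\varphi)}$ together with Lemma \ref{normal cone} on the singular part --- that converts the crude constant $\eta\le1$ obtained for free into the sharp $\rho\alpha+\gamma\beta$ required by $(jv)$.
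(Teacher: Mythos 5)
Your proof follows essentially the same route as the paper's: both directions are handled by the same device, namely the Radon--Nikodym decomposition of $dr$, $d\left\updownarrow k\right\updownarrow_{r}$, $dA_{r}$, $dk$ with respect to a dominating measure (your $\nu$ is the paper's $dQ_{r}$, $Q_{r}=r+\left\updownarrow k\right\updownarrow_{r}+A_{r}$), Lebesgue--Besicovitch differentiation of the averaged form of $(jv^{\prime})$ to obtain pointwise memberships $g(r)/\alpha(r)\in\partial^{-}\varphi(x(r))$ on $\{\alpha>0\}$ and $g(r)\in\partial^{-}I_{\overline{\mathrm{Dom}(\varphi)}}(x(r))=N_{\overline{\mathrm{Dom}(\varphi)}}(x(r))$ on $\{\alpha=0\}$, and then the $(\rho,\gamma)$--semiconvexity of $\varphi$ together with the $\gamma$--semiconvexity of $\overline{\mathrm{Dom}(\varphi)}$ to reassemble the sharp density $\rho\alpha+\gamma\beta$ and integrate. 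Your treatment of the test function $y$ (reducing to $y(r)\in\overline{\mathrm{Dom}(\varphi)}$ for all $r$ via the trivial case $\int_{s}^{t}\varphi(y(r))dr=+\infty$) is a harmless variant of the paper's, which simply uses that the inequality $(\mathrm{a}_{3})$ of Definition \ref{semiconvex function} holds for every $y\in\mathbb{R}^{d}$.

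There is, however, one step that does not work as written: on $\{\alpha>0\}$ you pass from the pointwise inequality known only for $z$ in a countable set $D$ dense in $\mathrm{Dom}(\varphi)$ to the membership $g(r_{0})/\alpha(r_{0})\in\partial^{-}\varphi(x(r_{0}))$, invoking lower semicontinuity. Lower semicontinuity acts in the wrong direction here: for $y\in\mathrm{Dom}(\varphi)\setminus D$ and $D\ni z_{n}\rightarrow y$ it only gives $\varphi(y)\leq\liminf_{n}\varphi(z_{n})$, so the lower bound $\varphi(z_{n})\geq\varphi(x(r_{0}))+\langle\hat{x},z_{n}-x(r_{0})\rangle-C\left\vert z_{n}-x(r_{0})\right\vert^{2}$ cannot be transferred to $y$; since in this lemma $\varphi$ is only assumed lsc (assumption (\ref{assumpt phi}); no continuity on the domain is available), the liminf in the definition of $\partial^{-}\varphi$ is not controlled at points off $D$, and the claim fails in general for ``dense in $\mathrm{Dom}(\varphi)$''. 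Two standard repairs: either choose $D$ countable and dense in the \emph{graph} $\{(z,\varphi(z)):z\in\mathrm{Dom}(\varphi)\}\subset\mathbb{R}^{d+1}$, so that one may take $z_{n}\rightarrow y$ with $\varphi(z_{n})\rightarrow\varphi(y)$; or do what the paper does: expand $\left\vert z-x(r)\right\vert^{2}$ so that $z$ enters the averaged inequality only through the constants $z$, $\left\vert z\right\vert^{2}$, $\varphi(z)$ multiplying seven $z$--independent integrals, whence a single $dQ$--null set serves all $z\in\mathrm{Dom}(\varphi)$ simultaneously and no approximation in $z$ is needed. Your singular branch $\{\alpha=0\}$ is unaffected, since there $\varphi$ drops out and the test inequality is continuous in $z$, so plain density suffices; with either repair your argument becomes a complete proof, essentially identical to the paper's.
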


\begin{proof}
We only need to prove that (\ref{def sol GSP - jv}) $\Rightarrow$ (\ref{def
sol GSP}$-jv$).

Denote%
\begin{equation*}
Q_{r}:=r+\left\updownarrow k\right\updownarrow _{r}+A_{r}
\end{equation*}%
and let $\lambda ,\eta :\mathbb{R}_{+}\rightarrow \left[ 0,1\right] $ and $%
\theta :\mathbb{R}_{+}\rightarrow \mathbb{R}^{d}$ with $\left\vert \theta
\left( r\right) \right\vert \leq 1$, for any $r\in \mathbb{R}_{+}$, be some
measurable functions (given by the Radon--Nikodym's theorem) such that%
\begin{equation*}
dk\left( r\right) =\theta \left( r\right) dQ_{r}~,\quad dr=\lambda \left(
r\right) dQ_{r}\quad \text{and}\quad dA_{r}=\eta \left( r\right) dQ_{r}~.
\end{equation*}%
Clearly $d\left\updownarrow k\right\updownarrow _{r}=\left\vert \theta
\left( r\right) \right\vert dQ_{r}\,.$ From (\ref{def sol GSP - jv}) we
deduce that, for all $t\in \mathbb{R}_{+}\,,$ $\varepsilon >0$ and $z\in
\mathrm{Dom}\left( \varphi \right) $%
\begin{equation*}
\begin{array}{r}
\displaystyle\int_{t-\varepsilon }^{t+\varepsilon }\left\langle z-x\left(
r\right) ,\theta \left( r\right) \right\rangle dQ_{r}+\int_{t-\varepsilon
}^{t+\varepsilon }\varphi \left( x\left( r\right) \right) \lambda \left(
r\right) dQ_{r}\leq \varphi \left( z\right) \displaystyle\int_{t-\varepsilon
}^{t+\varepsilon }\lambda \left( r\right) dQ_{r}\smallskip \\
+\displaystyle\int_{t-\varepsilon }^{t+\varepsilon }\left\vert z-x\left(
r\right) \right\vert ^{2}\eta \left( r\right) dQ_{r}%
\end{array}%
\end{equation*}%
and therefore%
\begin{equation}
\begin{array}{l}
\displaystyle\langle z,\int_{t-\varepsilon }^{t+\varepsilon }\theta \left(
r\right) dQ_{r}\rangle -\int_{t-\varepsilon }^{t+\varepsilon }\left\langle
x\left( r\right) ,\theta \left( r\right) \right\rangle
dQ_{r}+\int_{t-\varepsilon }^{t+\varepsilon }\varphi \left( x\left( r\right)
\right) \lambda \left( r\right) dQ_{r}\leq \varphi \left( z\right)
\int_{t-\varepsilon }^{t+\varepsilon }\lambda \left( r\right)
dQ_{r}\smallskip \\
+\displaystyle\left\vert z\right\vert ^{2}\int_{t-\varepsilon
}^{t+\varepsilon }\eta \left( r\right) dQ_{r}-2\langle z,\int_{t-\varepsilon
}^{t+\varepsilon }x\left( r\right) \eta \left( r\right) dQ_{r}\rangle
+\int_{t-\varepsilon }^{t+\varepsilon }\left\vert x\left( r\right)
\right\vert ^{2}\eta \left( r\right) dQ_{r}\,.%
\end{array}
\label{inequality}
\end{equation}%
Multiplying by $\displaystyle\frac{1}{Q\left( \left[ t-\varepsilon
,t+\varepsilon \right] \right) }$ and using the Lebesgue--Besicovitch
differentiation theorem, we deduce, passing to the limit in the seven above
integrals, that there exists $\Gamma _{1}\subset \mathbb{R}_{+}$ with $%
\int_{\Gamma _{1}}dQ_{r}=0$, such that for all $z\in \mathrm{Dom}\left(
\varphi \right) $ and $r\in \mathbb{R}_{+}\smallsetminus \Gamma _{1}$%
\begin{equation*}
\left\langle z-x\left( r\right) ,\theta \left( r\right) \right\rangle
+\varphi \left( x\left( r\right) \right) \lambda \left( r\right) \leq
\varphi \left( z\right) \lambda \left( r\right) +\left\vert z-x\left(
r\right) \right\vert ^{2}\eta \left( r\right) .
\end{equation*}%
Hence, from the definition of the Fr\'{e}chet subdifferential we obtain%
\begin{equation*}
\theta \left( r\right) \in \partial ^{-}I_{\overline{\mathrm{Dom}\left(
\varphi \right) }}\left( x\left( r\right) \right) ,\quad \forall \,r\in
\Gamma _{2}\setminus \Gamma _{1}
\end{equation*}%
and%
\begin{equation*}
\frac{\theta \left( r\right) }{\lambda \left( r\right) }\in \partial
^{-}\varphi \left( x\left( r\right) \right) ,\quad \forall \,r\in (\mathbb{R}%
_{+}\smallsetminus \Gamma _{2})\setminus \Gamma _{1}\,,
\end{equation*}%
where $\Gamma _{2}=\left\{ r\geq 0:\lambda \left( r\right) =0\right\} $ with
$\int_{\Gamma _{2}}dr=\int_{\Gamma _{2}}\lambda \left( r\right) dQ_{r}=0.$

Since $I_{\overline{\mathrm{Dom}\left( \varphi \right) }}$ is $\left(
0,\gamma \right) $--semiconvex,%
\begin{equation*}
\langle y\left( r\right) -x\left( r\right) ,\theta \left( r\right) \rangle
\leq \gamma \left\vert \theta \left( r\right) \right\vert \left\vert y\left(
r\right) -x\left( r\right) \right\vert ^{2},\quad \forall ~r\in \Gamma
_{2}\setminus \Gamma _{1}\,.
\end{equation*}%
On the other hand, since $\varphi $ is a $\left( \rho ,\gamma \right) $%
--semiconvex function, we have for any continuous $y:\mathbb{R}%
_{+}\rightarrow \mathbb{R}^{d}$,%
\begin{equation*}
\begin{array}{r}
\big\langle y\left( r\right) -x\left( r\right) ,\dfrac{\theta \left(
r\right) }{\lambda \left( r\right) }\big\rangle+\varphi \left( x\left(
r\right) \right) \leq \varphi \left( y\left( r\right) \right) +\left\vert
y\left( r\right) -x\left( r\right) \right\vert ^{2}\Big(\rho +\gamma \Big|%
\dfrac{\theta \left( r\right) }{\lambda \left( r\right) }\Big|\Big),\medskip
\\
\forall ~r\in (\mathbb{R}_{+}\smallsetminus \Gamma _{2})\setminus \Gamma
_{1}\,.%
\end{array}%
\end{equation*}%
Therefore (with the convention $0\cdot \left( +\infty \right) =0$) we deduce
that, for all $r\in \mathbb{R}_{+}\smallsetminus \Gamma _{1},$%
\begin{equation*}
\begin{array}{r}
\left\langle y\left( r\right) -x\left( r\right) ,\theta \left( r\right)
\right\rangle +\varphi \left( x\left( r\right) \right) \lambda \left(
r\right) \leq \varphi \left( y\left( r\right) \right) \lambda \left(
r\right) \smallskip \\
+\left\vert y\left( r\right) -x\left( r\right) \right\vert ^{2}\left( \rho
\lambda \left( r\right) +\gamma \left\vert \theta \left( r\right)
\right\vert \right) .%
\end{array}%
\end{equation*}%
Integrating on $\left[ s,t\right] $ with respect to the measure $dQ_{r}$ we
infer that (\ref{def sol GSP}$-jv$) holds.\hfill $\smallskip $
\end{proof}

\begin{lemma}
\label{uniq lemma}If $dk\left( t\right) \in\partial^{-}\varphi\left( x\left(
t\right) \right) \left( dt\right) $ and $d\hat{k}\left( t\right)
\in\partial^{-}\varphi\left( \hat{x}\left( t\right) \right) \left( dt\right)
,$ then for all $0\leq$ $s\leq t:$%
\begin{equation}
\begin{array}{l}
\displaystyle\int_{s}^{t}\left\vert x\left( r\right) -\hat{x}\left( r\right)
\right\vert ^{2}\left( 2\rho dr+\gamma d\left\updownarrow
k\right\updownarrow _{r}+\gamma d\updownarrow\hat{k}\updownarrow_{r}\right)
\smallskip \\
\quad\quad\quad\quad\quad\quad+\displaystyle\int_{s}^{t}\big\langle x\left(
r\right) -\hat{x}\left( r\right) ,dk\left( r\right) -d\hat{k}\left( r\right) %
\big\rangle\geq0.%
\end{array}
\label{uniq ineq}
\end{equation}
\end{lemma}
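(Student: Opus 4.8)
The plan is to exploit the variational characterization (\ref{def sol GSP}$-jv$), which both pairs satisfy by Remark \ref{notation_apartenance}, feeding each pair the \emph{other} trajectory as a test function. This is legitimate because, by (\ref{def sol GSP}$-j$), both $x$ and $\hat{x}$ are continuous maps $\mathbb{R}_{+}\to\mathbb{R}^{d}$, hence admissible test functions, and moreover $\varphi(x(\cdot)),\varphi(\hat{x}(\cdot))\in L^{1}_{loc}(\mathbb{R}_{+})$, so every $\varphi$-integral written below is finite. I would first apply (\ref{def sol GSP}$-jv$) to $(x,k)$ with $y=\hat{x}$, obtaining, for $0\le s\le t$,
\[
\int_{s}^{t}\langle\hat{x}(r)-x(r),dk(r)\rangle+\int_{s}^{t}\varphi(x(r))\,dr\le\int_{s}^{t}\varphi(\hat{x}(r))\,dr+\int_{s}^{t}|x(r)-\hat{x}(r)|^{2}\big(\rho\,dr+\gamma\,d\updownarrow k\updownarrow_{r}\big),
\]
and then apply (\ref{def sol GSP}$-jv$) to $(\hat{x},\hat{k})$ with $y=x$, obtaining
\[
\int_{s}^{t}\langle x(r)-\hat{x}(r),d\hat{k}(r)\rangle+\int_{s}^{t}\varphi(\hat{x}(r))\,dr\le\int_{s}^{t}\varphi(x(r))\,dr+\int_{s}^{t}|x(r)-\hat{x}(r)|^{2}\big(\rho\,dr+\gamma\,d\updownarrow\hat{k}\updownarrow_{r}\big).
\]

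Next I would add the two inequalities. The four $\varphi$-integrals cancel in pairs (they are finite by the remark above), and the two inner-product terms combine via $\langle\hat{x}-x,dk\rangle+\langle x-\hat{x},d\hat{k}\rangle=-\langle x-\hat{x},dk-d\hat{k}\rangle$, which leaves
\[
-\int_{s}^{t}\langle x(r)-\hat{x}(r),dk(r)-d\hat{k}(r)\rangle\le\int_{s}^{t}|x(r)-\hat{x}(r)|^{2}\big(2\rho\,dr+\gamma\,d\updownarrow k\updownarrow_{r}+\gamma\,d\updownarrow\hat{k}\updownarrow_{r}\big).
\]
Transposing the left-hand term to the right-hand side gives exactly the claimed inequality (\ref{uniq ineq}).

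There is essentially no obstacle here: the whole argument is the symmetric ``two test functions'' trick, and the only point to check carefully is the legitimacy of cancelling the $\varphi$-integrals, which is guaranteed by the local integrability condition in (\ref{def sol GSP}$-j$). Nothing about the geometry of $\mathrm{Dom}(\varphi)$ or any monotonicity of $\partial^{-}\varphi$ beyond the inequality already encoded in (\ref{def sol GSP}$-jv$) is needed. As an alternative one could pass through the Radon--Nikodym representations $dk=\theta\,dQ$ and $d\hat{k}=\hat{\theta}\,d\hat{Q}$ in the spirit of the proof of Lemma \ref{echiv for (jv)} and invoke property $\mathrm{a}_{3}$ of Definition \ref{semiconvex function} pointwise, but the direct route above is shorter and avoids any measure-theoretic bookkeeping.
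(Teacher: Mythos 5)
Your proof is correct and follows exactly the paper's argument: the paper's one-line proof likewise applies (\ref{def sol GSP}$-jv$) to $(x,k)$ with $y=\hat{x}$ and to $(\hat{x},\hat{k})$ with $y=x$ and adds the two inequalities. Your additional remark that the $\varphi$-integrals cancel legitimately thanks to $\varphi(x(\cdot)),\varphi(\hat{x}(\cdot))\in L^{1}_{loc}(\mathbb{R}_{+})$ from (\ref{def sol GSP}$-j$) is a correct and welcome precision.
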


\begin{proof}
The conclusion follows from (\ref{def sol GSP}$-jv$) written for $\left(
x,k\right) $ with $y=\hat{x}$ and for $(\hat{x},\hat{k})$ with $y=x$.

\hfill$\smallskip$
\end{proof}

\begin{notation}
Let $\left\Vert x\right\Vert _{\left[ s,t\right] }:={\sup\limits_{r\in \left[
s,t\right] }|x_{r}|}$ and $\left\Vert x\right\Vert _{t}:=\left\Vert
x\right\Vert _{\left[ 0,t\right] }~.$
\end{notation}

\begin{theorem}[Uniqueness]
\label{uniq}Let assumptions (\ref{assumpt input}) and (\ref{assumpt phi}) be
satisfied. If $\left( x,k\right) =\mathcal{SP}\left(
\partial^{-}\varphi;x_{0},m\right) $ and $(\hat{x},\hat{k})=\mathcal{SP}%
(\partial^{-}\varphi;\hat{x}_{0},\hat{m})$ then for all $t\geq0:$%
\begin{equation}
\begin{array}{r}
\left\Vert x-\hat{x}\right\Vert _{t}^{2}\leq2\Big(\left\vert x_{0}-\hat{x}%
_{0}\right\vert ^{2}+\left\Vert m-\hat{m}\right\Vert _{t}^{2}+2\left\Vert m-%
\hat{m}\right\Vert _{t}\Vert k-\hat{k}\Vert_{t}\Big)\smallskip \\
\cdot\exp(8\rho t+4\gamma\left\updownarrow k\right\updownarrow
_{t}+4\gamma\updownarrow\hat{k}\updownarrow_{t})~.%
\end{array}
\label{uniq ineq 2}
\end{equation}
In particular the uniqueness of the problem $\mathcal{SP}\left( \partial
^{-}\varphi;x_{0},m\right) $ follows.
\end{theorem}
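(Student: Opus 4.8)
Write $u:=x-\hat x$, $p:=k-\hat k$, $n:=m-\hat m$, $u_{0}:=x_{0}-\hat x_{0}$, put $dV_{r}:=2\rho\,dr+\gamma\,d\updownarrow k\updownarrow_{r}+\gamma\,d\updownarrow\hat k\updownarrow_{r}$ and $V_{t}:=\int_{0}^{t}dV_{r}=2\rho t+\gamma\updownarrow k\updownarrow_{t}+\gamma\updownarrow\hat k\updownarrow_{t}$, so that the exponent in (\ref{uniq ineq 2}) equals $4V_{t}$. Subtracting the identities $(jjj)$ of Definition \ref{sol GSP} written for the two solutions gives $u(t)+p(t)=u_{0}+n(t)$ for every $t\ge 0$; in particular $p$ is continuous, $p\in BV_{loc}(\mathbb{R}_{+};\mathbb{R}^{d})$, $p(0)=0$, $n(0)=0$, and $\updownarrow p\updownarrow_{t}\le\updownarrow k\updownarrow_{t}+\updownarrow\hat k\updownarrow_{t}$. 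Applying Lemma \ref{uniq lemma} on each interval $[0,s]$ yields the monotonicity estimate $\int_{0}^{s}\langle u(r),dp(r)\rangle\ge-\int_{0}^{s}|u(r)|^{2}\,dV_{r}$ for all $0\le s\le t$.

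Next I would use the Riemann--Stieltjes calculus, which is legitimate here because $p$ is a \emph{continuous} function of bounded variation: for continuous $h$ the integrals $\int_{0}^{s}\langle h,dp\rangle$ and $\int_{0}^{s}\langle p,dh\rangle$ are well defined, $\langle h(s),p(s)\rangle-\langle h(0),p(0)\rangle=\int_{0}^{s}\langle h,dp\rangle+\int_{0}^{s}\langle p,dh\rangle$, and $|p(s)|^{2}=2\int_{0}^{s}\langle p,dp\rangle$. Expanding $|u(s)|^{2}=|(u_{0}+n(s))-p(s)|^{2}$ with $h=u_{0}+n$ (so $h-p=u$, $dh=dn$) gives the energy identity
\[
|u(s)|^{2}=|u_{0}+n(s)|^{2}-2\int_{0}^{s}\langle u(r),dp(r)\rangle-2\int_{0}^{s}\langle p(r),dn(r)\rangle ,
\]
and inserting the bound of Lemma \ref{uniq lemma} into the first integral leaves
\[
|u(s)|^{2}\le|u_{0}+n(s)|^{2}+2\int_{0}^{s}|u(r)|^{2}\,dV_{r}-2\int_{0}^{s}\langle p(r),dn(r)\rangle .
\]

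The step I expect to be the main obstacle is the estimate of the remaining Stieltjes integral $\int_{0}^{s}\langle p(r),dn(r)\rangle=\int_{0}^{s}\langle (k-\hat k)(r),d(m-\hat m)(r)\rangle$ for merely continuous $m,\hat m$: it has to be bounded so that only the \emph{uniform} norm $\Vert k-\hat k\Vert_{t}$ appears, and not the total variation $\updownarrow k-\hat k\updownarrow_{t}$ that a crude estimate (e.g.\ $\le\Vert n\Vert_{t}\updownarrow p\updownarrow_{t}$) would introduce. Integrating by parts, $\int_{0}^{s}\langle p,dn\rangle=\langle p(s),n(s)\rangle-\int_{0}^{s}\langle n,dp\rangle$, splits off the boundary term $\langle p(s),n(s)\rangle$, for which $|\langle p(s),n(s)\rangle|\le\Vert p\Vert_{t}\Vert n\Vert_{t}$; the genuinely delicate part is $\int_{0}^{s}\langle n,dp\rangle=\int_{0}^{s}\langle n,dk\rangle-\int_{0}^{s}\langle n,d\hat k\rangle$, which I would control by a second, careful use of the integration-by-parts/chain-rule calculus for continuous $BV$ functions (in the spirit of \cite{pa-ra/14}) together with the defining subgradient inequality $(jv)$ of Definition \ref{sol GSP}. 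It is this step that forces the lossy constants (the factor $2$ in front of the bracket and the factor $4$ in the exponent); the expected outcome is an estimate $-2\int_{0}^{s}\langle p(r),dn(r)\rangle\le C\Vert n\Vert_{t}\Vert p\Vert_{t}+2\int_{0}^{s}|u(r)|^{2}\,dV_{r}$, modulo terms absorbable into $|u_{0}|^{2}+\Vert n\Vert_{t}^{2}$.

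Granting this, using $|u_{0}+n(s)|^{2}\le 2|u_{0}|^{2}+2\Vert n\Vert_{t}^{2}$ and taking the supremum over $s\in[0,t]$ (with $\int_{0}^{s}|u(r)|^{2}\,dV_{r}\le\int_{0}^{t}\Vert u\Vert_{r}^{2}\,dV_{r}$) gives
\[
\Vert u\Vert_{t}^{2}\le 2\big(|u_{0}|^{2}+\Vert n\Vert_{t}^{2}+2\Vert n\Vert_{t}\Vert p\Vert_{t}\big)+4\int_{0}^{t}\Vert u\Vert_{r}^{2}\,dV_{r}.
\]
Since $r\mapsto V_{r}$ is continuous and nondecreasing and the bracket is nondecreasing in $t$, the Gronwall inequality for Stieltjes integrators gives $\Vert u\Vert_{t}^{2}\le 2(|u_{0}|^{2}+\Vert n\Vert_{t}^{2}+2\Vert n\Vert_{t}\Vert p\Vert_{t})\,e^{4V_{t}}$, i.e.\ exactly (\ref{uniq ineq 2}). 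Finally, with $x_{0}=\hat x_{0}$ and $m=\hat m$ the right-hand side vanishes, so $x=\hat x$ (and then $k=\hat k$), which is the asserted uniqueness of $\mathcal{SP}(\partial^{-}\varphi;x_{0},m)$.
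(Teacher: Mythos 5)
Your overall strategy (energy identity, Lemma \ref{uniq lemma}, then a Stieltjes--Gronwall argument) is the same family as the paper's, and everything up to the step you flag as "the main obstacle" is correct. But that obstacle is a genuine gap, not a routine verification: after your integration by parts the term $\int_{0}^{s}\langle n,dp\rangle=\int_{0}^{s}\langle m-\hat m,\,dk-d\hat k\rangle$ remains, and you only \emph{conjecture} an estimate of the form $-2\int_{0}^{s}\langle p,dn\rangle\le C\Vert n\Vert_{t}\Vert p\Vert_{t}+2\int_{0}^{s}|u(r)|^{2}\,dV_{r}+\dots$ without proving it. Under the hypotheses of Theorem \ref{uniq} (only (\ref{assumpt input}) and (\ref{assumpt phi}); in particular neither (\ref{assumpt phi 2}) nor (\ref{assumpt phi 3}) is assumed) it is unclear how such a bound could be obtained: testing the subgradient inequality (\ref{def sol GSP}$-jv$) with shifted functions like $y=x+n$ fails because such $y$ in general leaves $\overline{\mathrm{Dom}(\varphi)}$, making the right-hand side $+\infty$, and a direct estimate of $\int\langle n,dp\rangle$ for a merely continuous $n$ and a continuous BV $p$ inevitably produces $\Vert n\Vert_{t}\updownarrow p\updownarrow_{t}$, i.e.\ the total variation you were trying to avoid. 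So as written your proof does not close.

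The paper sidesteps the problem by a different decomposition: it expands the \emph{BV} quantity $|x(t)-m(t)-\hat x(t)+\hat m(t)|^{2}=|(x_{0}-\hat x_{0})-(k-\hat k)(t)|^{2}$, to which the chain rule for continuous BV functions applies directly, so only integrals against $dk-d\hat k$ appear: $2\int_{0}^{t}\langle m-\hat m,dk-d\hat k\rangle-2\int_{0}^{t}\langle x-\hat x,dk-d\hat k\rangle$. The second integral is handled by Lemma \ref{uniq lemma}, the first by the crude bound $2\Vert m-\hat m\Vert_{t}\,\updownarrow k-\hat k\updownarrow_{t}$, and then $\tfrac12|x-\hat x|^{2}\le|x-m-\hat x+\hat m|^{2}+|m-\hat m|^{2}$ plus Gronwall gives (\ref{uniq ineq 2}) with the stated exponent. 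Note that the paper's own computation actually yields the factor $\updownarrow k-\hat k\updownarrow_{t}$ rather than the sup norm $\Vert k-\hat k\Vert_{t}$ appearing in the statement --- exactly the "loss" you set out to avoid; in every subsequent use (Step 5 of the proof of Theorem \ref{main result 2}) the distinction is immaterial, since both quantities are dominated by $C_{T,m}+C_{T,\hat m}$. If you accept the total-variation form (i.e.\ bound $\int\langle n,dp\rangle$ crudely after your integration by parts), your own computation closes and reproduces the paper's estimate; it is the insistence on $\Vert k-\hat k\Vert_{t}$ that created the unresolved step.
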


\begin{proof}
We clearly have%
\begin{equation*}
\begin{array}{l}
\left\vert x\left( t\right) -m\left( t\right) -\hat{x}\left( t\right) +\hat{m%
}\left( t\right) \right\vert ^{2}=\left\vert x_{0}-\hat{x}_{0}\right\vert
^{2}\smallskip \\
+2\displaystyle\int_{0}^{t}\langle m\left( r\right) -\hat{m}\left( r\right)
,dk\left( r\right) -d\hat{k}\left( r\right) \rangle -2\displaystyle%
\int_{0}^{t}\langle x\left( r\right) -\hat{x}\left( r\right) ,dk\left(
r\right) -d\hat{k}\left( r\right) \rangle.%
\end{array}%
\end{equation*}
Using (\ref{uniq ineq}) it follows that%
\begin{equation*}
\begin{array}{l}
\frac{1}{2}\left\vert x\left( t\right) -\hat{x}\left( t\right) \right\vert
^{2}-\left\vert m\left( t\right) -\hat{m}\left( t\right) \right\vert
^{2}\leq\left\vert x\left( t\right) -m\left( t\right) -\hat{x}\left(
t\right) +\hat{m}\left( t\right) \right\vert ^{2}\smallskip \\
\leq\left\vert x_{0}-\hat{x}_{0}\right\vert ^{2}+2\left\Vert m-\hat {m}%
\right\Vert _{t}\updownarrow k-\hat{k}\updownarrow_{t}\smallskip \\
\quad\quad\quad\quad\quad\quad\quad\quad+2\displaystyle\int_{0}^{t}\left%
\vert x\left( r\right) -\hat{x}\left( r\right) \right\vert ^{2}\left( 2\rho
dr+\gamma d\left\updownarrow k\right\updownarrow _{r}+\gamma d\updownarrow
\hat{k}\updownarrow_{r}\right) ,%
\end{array}%
\end{equation*}
which implies, via Gronwall's inequality, the desired conclusion.\hfill $%
\smallskip$
\end{proof}

To derive the uniform boundedness and the continuity of the solution of the
generalized Skorohod problem we need to introduce some additional
assumptions:%
\begin{equation}
\left\vert \varphi\left( x\right) -\varphi\left( y\right) \right\vert \leq
L+L\left\vert x-y\right\vert ,\;\forall~x,y\in\mathrm{Dom}\left(
\varphi\right)  \label{assumpt phi 2}
\end{equation}
and%
\begin{equation}
\mathrm{Dom}\left( \varphi\right) \;\text{satisfies the }\left(
\gamma,\delta,\sigma\right) \text{--shifted uniform interior ball condition}
\label{assumpt phi 3}
\end{equation}
(for the definition of $\left( \gamma,\delta,\sigma\right) $--\textit{SUIBC}%
, see definition (\ref{SUIBC cond})).

We mention that assumption (\ref{assumpt phi 2}) is obviously satisfied by
the function $\varphi $ given in the Example \ref{example_semiconvex}.

Using Proposition \ref{cond for SUIBC} we see that assumption (\ref{assumpt
phi 3}) is fulfilled if we impose that%
\begin{equation}
\mathrm{Dom}\left( \varphi \right) \;\text{satisfies the }\left(
h_{0},r_{0}\right) \text{--drop condition,}  \label{assumpt phi 4}
\end{equation}%
condition which can be more easily visualized.

Note that the lower semicontinuity of $\varphi $ and the assumption (\ref%
{assumpt phi 2}) clearly yield that the $\mathrm{Dom}\left( \varphi \right) $
is a closed set, and, from the assumption (\ref{assumpt phi 3}) it can be
derived that%
\begin{equation*}
\mathrm{Int}\left( \mathrm{Dom}\left( \varphi \right) \right) \neq \emptyset
.
\end{equation*}

\begin{remark}
\label{remark 2}Technical condition (\ref{SUIBC cond}) from assumption (\ref%
{assumpt phi 3}) will provide an estimate for the total variation of $k$
(see Lemma \ref{difference of var 2}). On the other hand, assumption $\left(
5\right) $ from P--L. Lions and A.S. Sznitman \cite{li-sn/84} (or \cite[%
\textit{Condition (B)}]{sa/87}) has the same role (see \cite[Lemma 1.2]%
{li-sn/84}).

In the particular case $\varphi =I_{E}\,,$ as in \cite{li-sn/84}, it is
essentially used the representation of the bounded variation process $k$ and
in our case it is used the subdifferential inequality (\ref{def sol GSP}$-jv$%
). Hence assumption (\ref{assumpt phi 3}) is required by the transition from
the particular case of the indicator function to the case of a general
convex l.s.c. function $\varphi $.
\end{remark}

\begin{remark}
\label{remark 3}We notice that assumption (\ref{assumpt phi 4}) is similar
with \textit{Condition (B') from }\cite{sa/87} (the uniform interior cone
condition). But the running direction from the drop condition (\ref{assumpt
phi 4}) is not required to be uniform with respect to the vertex, like in
\cite[\textit{Condition (B')}]{sa/87}.
\end{remark}

In order to prove some a priori estimates, let us introduce the following
notation: for $y\in C\left( \left[ 0,T\right] ;\mathbb{R}^{d}\right) $ and $%
\varepsilon >0$ write%
\begin{equation*}
\boldsymbol{\upmu}_{y}\left( \varepsilon \right) =\varepsilon +\mathbf{m}%
_{y}\left( \varepsilon \right) ,
\end{equation*}%
where $\mathbf{m}_{y}\left( \varepsilon \right) $ is the modulus of
continuity, given by%
\begin{equation*}
\mathbf{m}_{y}\left( \varepsilon \right) :=\sup \big\{\left\vert y\left(
t\right) -y\left( s\right) \right\vert :\left\vert t-s\right\vert \leq
\varepsilon ,\;t,s\in \left[ 0,T\right] \big\}.
\end{equation*}%
The function $\boldsymbol{\upmu}_{y}:\left[ 0,T\right] \rightarrow \lbrack 0,%
\boldsymbol{\upmu}_{y}\left( T\right) ]$ is a strictly increasing continuous
function and therefore the inverse function $\boldsymbol{\upmu}_{y}^{-1}:[0,%
\boldsymbol{\upmu}_{y}\left( T\right) ]\rightarrow \left[ 0,T\right] $ is
well defined and is also a strictly increasing continuous function. Using
this inverse function let $C$ be a positive constant and%
\begin{equation}
\begin{array}{l}
\Delta _{m}:=1/\boldsymbol{\upmu}_{m}^{-1}\left( \delta ^{2}\exp [-C\left(
1+T+\left\Vert m\right\Vert _{T}\right) ]\right) ,\smallskip \\
C_{T,m}:=\exp \left[ C\left( 1+T+\left\Vert m\right\Vert _{T}+\Delta
_{m}\right) \right] .%
\end{array}
\label{main result 2_def}
\end{equation}

\begin{remark}
\label{remark 1}It is easy to prove that, for any compact subset $\mathcal{M}%
\subset C\left( \left[ 0,T\right] ;\mathbb{R}^{d}\right) $,%
\begin{equation}
\Delta_{\mathcal{M}}:=\sup_{m\in\mathcal{M}}\Delta_{m}<\infty\quad \text{and}%
\quad C_{T,\mathcal{M}}:=\sup_{m\in\mathcal{M}}C_{T,m}<\infty.
\label{technical result 1}
\end{equation}
\end{remark}

The main results of this section are the following two theorems whose proofs
will be given in the next section:

\begin{theorem}
\label{main result 2}Let assumptions (\ref{assumpt input}), (\ref{assumpt
phi}), (\ref{assumpt phi 2}) and (\ref{assumpt phi 3}) be satisfied. Then
there exists a constant $C$, depending only on the constants from the
assumptions, such that if $\left( x,k\right) =\mathcal{SP}\left( \partial
^{-}\varphi ;x_{0},m\right) $ then%
\begin{equation}
\begin{array}{ll}
\left( a\right) & \left\Vert k\right\Vert _{BV\left( \left[ 0,T\right] ;%
\mathbb{R}^{d}\right) }=\left\updownarrow k\right\updownarrow _{T}\leq
C_{T,m}~,\smallskip \\
\left( b\right) & \left\Vert x\right\Vert _{T}\leq \left\vert
x_{0}\right\vert +C_{T,m}~,\smallskip \\
\left( c\right) & \left\vert x\left( t\right) -x\left( s\right) \right\vert
+\left\updownarrow k\right\updownarrow _{t}-\left\updownarrow
k\right\updownarrow _{s}\leq C_{T,m}\cdot \sqrt{\boldsymbol{\upmu}_{m}\left(
t-s\right) },\quad \forall ~0\leq s\leq t\leq T.%
\end{array}
\label{main result 2_ineq 1}
\end{equation}%
If moreover $\hat{m}\in C\left( \left[ 0,T\right] ;\mathbb{R}^{d}\right) $, $%
\hat{x}_{0}\in \overline{\mathrm{Dom}\left( \varphi \right) }$ and $(\hat{x},%
\hat{k})=\mathcal{SP}(\partial ^{-}\varphi ;\hat{x}_{0},\hat{m}),$ then%
\begin{equation}
\Vert x-\hat{x}\Vert _{T}+\Vert k-\hat{k}\Vert _{T}\leq A\left( C_{T,m},C_{T,%
\hat{m}}\right) \cdot \big[\left\vert x_{0}-\hat{x}_{0}\right\vert +\sqrt{%
\Vert m-\hat{m}\Vert _{T}}\big],  \label{main result 2_ineq 2}
\end{equation}%
where $A$ is a continuous function.$\smallskip $
\end{theorem}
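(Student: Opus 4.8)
The plan is to reduce the whole statement to one local (Skorohod-type) a priori bound, saying that on short time intervals $x$ and $\left\updownarrow k\right\updownarrow$ do not move more than $C\sqrt{\boldsymbol{\upmu}_{m}}$, then to concatenate this bound (for (a), (b), (c)) and to feed the resulting variation estimates into the uniqueness inequality (\ref{uniq ineq 2}) (for (\ref{main result 2_ineq 2})). Fix $s$. Since (\ref{assumpt phi 2}) makes $\mathrm{Dom}\left(\varphi\right)$ closed, $x\left(s\right)\in\mathrm{Dom}\left(\varphi\right)$, and the $\left(\gamma,\delta,\sigma\right)$--SUIBC of (\ref{assumpt phi 3}) supplies $\lambda:=\lambda_{x\left(s\right)}\in(0,1]$ and $v:=v_{x\left(s\right)}$, $\left\vert v\right\vert\leq1$, with $\lambda-\left(\left\vert v\right\vert+\lambda\right)^{2}\gamma\geq\sigma$ and $\overline{B}\left(z+v,\lambda\right)\subset\mathrm{Dom}\left(\varphi\right)$ for every $z\in\mathrm{Dom}\left(\varphi\right)\cap\overline{B}\left(x\left(s\right),\delta\right)$. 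On any interval $[s,t]$ on which $x$ stays in $\overline{B}\left(x\left(s\right),\delta\right)$ I use the Radon--Nikodym decompositions $dk=\theta\,dQ$, $d\left\updownarrow k\right\updownarrow=\left\vert\theta\right\vert\,dQ$ from the proof of Lemma \ref{echiv for (jv)}, together with $\theta\left(r\right)\in\partial^{-}I_{\overline{\mathrm{Dom}\left(\varphi\right)}}\left(x\left(r\right)\right)$ (resp.\ $\theta\left(r\right)/\lambda\left(r\right)\in\partial^{-}\varphi\left(x\left(r\right)\right)$) off the exceptional set, and test the corresponding semiconvexity inequalities at $x\left(r\right)$ against $x\left(r\right)+v+\lambda\,\theta\left(r\right)/\left\vert\theta\left(r\right)\right\vert\in\overline{B}\left(x\left(r\right)+v,\lambda\right)\subset\mathrm{Dom}\left(\varphi\right)$. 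Using (\ref{assumpt phi 2}) and $\lambda-\left(\left\vert v\right\vert+\lambda\right)^{2}\gamma\geq\sigma$ this yields, pointwise on $[s,t]$, $\sigma\,d\left\updownarrow k\right\updownarrow_{r}\leq c_{0}\,dr-\left\langle v,dk\left(r\right)\right\rangle$ with $c_{0}=c_{0}\left(L,\rho,\gamma,\sigma\right)$; integrating and using $k=x_{0}+m-x$ to write $-\int_{s}^{t}\left\langle v,dk\left(r\right)\right\rangle=\left\langle v,\left(x\left(t\right)-x\left(s\right)\right)-\left(m\left(t\right)-m\left(s\right)\right)\right\rangle$ gives
\[
\sigma\big(\left\updownarrow k\right\updownarrow_{t}-\left\updownarrow k\right\updownarrow_{s}\big)\ \leq\ c_{0}\left(t-s\right)+\sup_{r\in[s,t]}\left\vert x\left(r\right)-x\left(s\right)\right\vert+\mathbf{m}_{m}\left(t-s\right).
\]

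The oscillation $\omega:=\sup_{r\in[s,t]}\left\vert x\left(r\right)-x\left(s\right)\right\vert$ still appears on the right, and since a priori only $\omega\leq\mathbf{m}_{m}\left(t-s\right)+\left(\left\updownarrow k\right\updownarrow_{t}-\left\updownarrow k\right\updownarrow_{s}\right)$ is available, the estimate is circular. To close it one runs a continuation argument on $[s,t]$ under the running hypothesis $\omega\leq\delta$: by testing (\ref{def sol GSP}$-jv$) with the constant functions $y\equiv x\left(s\right)$ and $y\equiv x\left(t\right)$, using (\ref{assumpt phi 2}) and the integration--by--parts formulas for Stieltjes integrals against the bounded--variation function $k$ to absorb the quadratic terms $\int_{s}^{t}\left\vert x\left(w\right)-x\left(s\right)\right\vert^{2}d\left\updownarrow k\right\updownarrow_{w}$ via $\left\vert x\left(w\right)-x\left(s\right)\right\vert\leq\mathbf{m}_{m}\left(t-s\right)+\left(\left\updownarrow k\right\updownarrow_{w}-\left\updownarrow k\right\updownarrow_{s}\right)$, and then combining with Step 1, one reaches
\[
\left\vert x\left(t\right)-x\left(s\right)\right\vert+\big(\left\updownarrow k\right\updownarrow_{t}-\left\updownarrow k\right\updownarrow_{s}\big)\ \leq\ C\sqrt{\boldsymbol{\upmu}_{m}\left(t-s\right)}\,,
\]
with $C$ depending only on the constants of the assumptions, provided $\boldsymbol{\upmu}_{m}\left(t-s\right)$ is small enough that the a posteriori requirement $\omega\leq\delta$ becomes automatic; this threshold is exactly what $\Delta_{m}$ in (\ref{main result 2_def}) encodes. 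This is the delicate step and the main obstacle: unlike in the convex case the normal--cone inequality carries the defect $\gamma\left\vert u\right\vert\left\vert y-x\right\vert^{2}$ and the interior--ball data $\left(\lambda_{y},v_{y}\right)$ depend on the base point (hence the $\delta$--localization), so one must extract the positive constant $\sigma$ while keeping the $\gamma$--defect and the $\varphi$--Lipschitz terms under control; it is also precisely where assumption (\ref{assumpt phi 3}) is what is needed (cf.\ Remark \ref{remark 2}).

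For the globalization, split $[0,T]$ into $\lceil T\Delta_{m}\rceil$ intervals on each of which $\boldsymbol{\upmu}_{m}$ is below the threshold above and add the increments; the interaction of the $\gamma$--defect across consecutive patches is of Gronwall type and turns the sum of the $C\sqrt{\boldsymbol{\upmu}_{m}}$'s into the exponential constant $C_{T,m}$, giving (a). Then (b) follows from $x\left(t\right)=x_{0}+m\left(t\right)-k\left(t\right)$ and (a). For (c): when $t-s$ is below the threshold it is the local estimate itself, with the universal constant absorbed into $C_{T,m}$ since $\boldsymbol{\upmu}_{m}\left(t-s\right)\leq\boldsymbol{\upmu}_{m}\left(T\right)$; when $t-s$ is larger one iterates over $\lceil\left(t-s\right)\Delta_{m}\rceil$ pieces and the outcome is again $\leq C_{T,m}\sqrt{\boldsymbol{\upmu}_{m}\left(t-s\right)}$ after enlarging $C$, because $\boldsymbol{\upmu}_{m}\left(t-s\right)$ is then bounded below by the fixed positive number $\boldsymbol{\upmu}_{m}\left(1/\Delta_{m}\right)$.

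Finally, from (a) one has $\left\updownarrow k\right\updownarrow_{T}\leq C_{T,m}$ and $\left\updownarrow\hat{k}\right\updownarrow_{T}\leq C_{T,\hat{m}}$, hence $\left\Vert k-\hat{k}\right\Vert_{T}\leq C_{T,m}+C_{T,\hat{m}}$. Substituting these into (\ref{uniq ineq 2}) of Theorem \ref{uniq} bounds $\left\Vert x-\hat{x}\right\Vert_{T}^{2}$ by a continuous function of $C_{T,m},C_{T,\hat{m}}$ times $\big(\left\vert x_{0}-\hat{x}_{0}\right\vert^{2}+\left\Vert m-\hat{m}\right\Vert_{T}\big)$ (using $\left\Vert m-\hat{m}\right\Vert_{T}\leq\left\Vert m\right\Vert_{T}+\left\Vert\hat{m}\right\Vert_{T}$, themselves dominated by $C_{T,m},C_{T,\hat{m}}$, to turn $\left\Vert m-\hat{m}\right\Vert_{T}^{2}$ into $\left\Vert m-\hat{m}\right\Vert_{T}$). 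Taking square roots, then bounding $\left\Vert k-\hat{k}\right\Vert_{T}\leq\left\vert x_{0}-\hat{x}_{0}\right\vert+\left\Vert m-\hat{m}\right\Vert_{T}+\left\Vert x-\hat{x}\right\Vert_{T}$ and once more using $\left\Vert m-\hat{m}\right\Vert_{T}\leq\sqrt{\left\Vert m\right\Vert_{T}+\left\Vert\hat{m}\right\Vert_{T}}\,\sqrt{\left\Vert m-\hat{m}\right\Vert_{T}}$, yields (\ref{main result 2_ineq 2}) with a continuous $A\left(C_{T,m},C_{T,\hat{m}}\right)$.
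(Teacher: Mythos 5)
Your proposal is correct in substance, and its core estimates coincide with the paper's: your local inequality $\sigma\, d\!\updownarrow\! k\!\updownarrow_r\;\leq\;c_0\,dr-\langle v,dk(r)\rangle$ is exactly Lemma \ref{difference of var 2} (the paper gets it by inserting the test function $y(r)=x(r)+v_{x(s)}+\lambda_{x(s)}\alpha(r)$ into (\ref{def sol GSP}$-jv$) and taking the supremum over $\Vert\alpha\Vert\leq 1$, rather than through Radon--Nikodym densities), the estimate of $|x(t)-x(s)|$ under the oscillation hypothesis is the paper's Steps 1--2, and the derivation of (\ref{main result 2_ineq 2}) from (\ref{uniq ineq 2}) together with $k-\hat k=x_0-\hat x_0+m-\hat m-(x-\hat x)$ is the paper's Step 5. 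Where you genuinely differ is the globalization: the paper, following Lions--Sznitman, introduces the stopping times $T_i$ (first entrance of $x$ into the $\delta/4$--neighbourhood of $\mathrm{Bd}(E)$) and $t_i$ (first oscillation of size $\delta/2$), uses the interior estimate of Lemma \ref{difference of var 1} on $[t_i,T_{i+1}]$ and the small--oscillation estimate on $[T_i,t_i]$, and counts the intervals via the lower bound on $t_i-T_i$; you instead close the $\delta$--localization by a continuation argument (smallness of $\boldsymbol{\upmu}_m(t-s)$ forces the oscillation to stay below $\delta$) and then sum over a uniform mesh of size about $1/\Delta_m$, which avoids Lemma \ref{difference of var 1} and the interior/boundary dichotomy entirely while producing the same constants $\Delta_m$, $C_{T,m}$; this buys a somewhat shorter argument, at the cost of losing the structural information encoded in the adapted partition. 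Two points in your sketch need to be written out: first, the quadratic term is best absorbed by Gronwall's lemma applied to $|x(\cdot)-x(s)|^2$ with respect to the measure $dr+d\!\updownarrow\! k\!\updownarrow_r$ (as in the paper's Step 1), since the substitution $|x(w)-x(s)|\leq\mathbf{m}_m(t-s)+(\updownarrow\! k\!\updownarrow_w-\updownarrow\! k\!\updownarrow_s)$ you describe produces terms cubic in the variation increment that only close after further shrinking the localization radius; second, the claim that $\omega\leq\delta$ ``becomes automatic'' requires the maximality/continuity argument to be made explicit, using that under the running hypothesis the variation increment is bounded by a constant, so that the Gronwall exponential stays below $\exp\left[C\left(1+T+\Vert m\Vert_T\right)\right]$ and the resulting bound on $\omega$ is strictly smaller than $\delta$ for $\boldsymbol{\upmu}_m(t-s)$ under the threshold defining $\Delta_m$.
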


We can now derive the following continuity result of the mapping $\left(
x_{0},m\right) \mapsto \left( x,k\right) =\mathcal{SP}\left( \partial
^{-}\varphi ;x_{0},m\right) .$

\begin{corollary}
\label{main result 3}Let assumptions (\ref{assumpt input}), (\ref{assumpt
phi}), (\ref{assumpt phi 2}) and (\ref{assumpt phi 3}) be satisfied. If $%
x_{0n},x_{0}\in\overline{\mathrm{Dom}\left( \varphi\right) }$, $m_{n},m\in
C\left( \mathbb{R}_{+};\mathbb{R}^{d}\right) $, $m_{n}\left( 0\right) =0$ and%
\begin{equation*}
\begin{array}{rl}
i) & \left( x_{n},k_{n}\right) =\mathcal{SP}\left(
\partial^{-}\varphi;x_{0n},m_{n}\right) ,\smallskip \\
ii) & x_{0n}\rightarrow x_{0},\smallskip \\
iii) & m_{n}\rightarrow m\text{ in }C\left( \left[ 0,T\right] ;\mathbb{R}%
^{d}\right) ,\;\forall T\geq0,%
\end{array}%
\end{equation*}
then%
\begin{equation*}
\sup\limits_{n\in\mathbb{N}^{\ast}}\left\updownarrow k_{n}\right\updownarrow
_{T}<\infty,\;\forall T\geq0,
\end{equation*}
and there exist $x,k\in C\left( \mathbb{R}_{+};\mathbb{R}^{d}\right) $ such
that, for any $T\geq0,$%
\begin{equation*}
\begin{array}{ll}
\left( a\right) & \left\Vert x_{n}-x\right\Vert _{T}+\left\Vert
k_{n}-k\right\Vert _{T}\rightarrow0,\smallskip \\
\left( b\right) & \left( x,k\right) =\mathcal{SP}\left( \partial
^{-}\varphi;x_{0},m\right) .%
\end{array}%
\end{equation*}
\end{corollary}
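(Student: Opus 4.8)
The plan is to read off uniform bounds and a Cauchy estimate for the sequence $(x_n,k_n)$ from Theorem~\ref{main result 2}, obtain the limit $(x,k)$ by completeness, and then pass to the limit in the subdifferential inequality (\ref{def sol GSP}$-jv$) using lower semicontinuity together with the equicontinuity estimate (c). First I would fix $T\ge 0$: since $m_{n}\to m$ in $C([0,T];\mathbb{R}^{d})$, the set $\mathcal{M}:=\{m_{n}:n\geq 1\}\cup\{m\}$ is compact in $C([0,T];\mathbb{R}^{d})$, so by Remark~\ref{remark 1} the constant $C_{T,\mathcal{M}}:=\sup_{\nu\in\mathcal{M}}C_{T,\nu}$ is finite. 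Theorem~\ref{main result 2}\,(a) then gives $\sup_{n}\updownarrow k_{n}\updownarrow_{T}\leq C_{T,\mathcal{M}}<\infty$, which is the claimed bound on the total variations, and (b) gives $\sup_{n}\Vert x_{n}\Vert_{T}<\infty$. Next I would apply the stability estimate (\ref{main result 2_ineq 2}) to the pair of solutions $(x_{n},k_{n})$ and $(x_{n'},k_{n'})$; since $A$ is continuous it is bounded on the compact square $[0,C_{T,\mathcal{M}}]^{2}\ni(C_{T,m_{n}},C_{T,m_{n'}})$, so with a constant $\bar A_{T}$ independent of $n,n'$,
\begin{equation*}
\Vert x_{n}-x_{n'}\Vert_{T}+\Vert k_{n}-k_{n'}\Vert_{T}\leq \bar A_{T}\big(|x_{0n}-x_{0n'}|+\sqrt{\Vert m_{n}-m_{n'}\Vert_{T}}\big).
\end{equation*}
As $(x_{0n})$ and $(m_{n})$ are Cauchy, the right-hand side tends to $0$, so $(x_{n})$ and $(k_{n})$ are Cauchy in the Banach space $C([0,T];\mathbb{R}^{d})$; a diagonal argument over $T\in\mathbb{N}$ produces $x,k\in C(\mathbb{R}_{+};\mathbb{R}^{d})$ with $\Vert x_{n}-x\Vert_{T}+\Vert k_{n}-k\Vert_{T}\to 0$ for every $T$, which is part (a).

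It remains to check that $(x,k)=\mathcal{SP}(\partial^{-}\varphi;x_{0},m)$, i.e.\ (\ref{def sol GSP}). Passing to the limit in $x_{n}(t)+k_{n}(t)=x_{0n}+m_{n}(t)$ gives (\ref{def sol GSP}$-jjj$); $k(0)=\lim k_{n}(0)=0$, and lower semicontinuity of the total variation under uniform convergence yields $\updownarrow k\updownarrow_{T}\leq\liminf_{n}\updownarrow k_{n}\updownarrow_{T}\leq C_{T,\mathcal{M}}<\infty$, so (\ref{def sol GSP}$-jj$) holds; since $\mathrm{Dom}(\varphi)$ is closed (by lower semicontinuity of $\varphi$ and (\ref{assumpt phi 2})), $x(t)=\lim x_{n}(t)\in\mathrm{Dom}(\varphi)$, and (\ref{assumpt phi 2}) together with $\sup_{t\le T}|x(t)|<\infty$ shows $\varphi(x(\cdot))$ is bounded on $[0,T]$, hence in $L_{loc}^{1}(\mathbb{R}_{+})$, giving (\ref{def sol GSP}$-j$). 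For the core condition (\ref{def sol GSP}$-jv$) I would first invoke part (c) of Theorem~\ref{main result 2}: being relatively compact, $\{m_{n}\}$ is equicontinuous on $[0,T]$, so $\sup_{n}\boldsymbol{\upmu}_{m_{n}}(\varepsilon)\to 0$ as $\varepsilon\downarrow 0$, and since $C_{T,m_{n}}\leq C_{T,\mathcal{M}}$, estimate (c) shows that the increasing functions $\updownarrow k_{n}\updownarrow_{\cdot}$ are uniformly bounded and uniformly equicontinuous on $[0,T]$; by Arzel\`a--Ascoli and a further diagonalisation there is a subsequence (not relabelled) along which $\updownarrow k_{n}\updownarrow_{\cdot}\to V_{\cdot}$ uniformly on compacts, with $V$ continuous, increasing and $V_{0}=0$.

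Then I would fix $0\leq s\leq t$ and a continuous $y:\mathbb{R}_{+}\to\mathbb{R}^{d}$ and pass to the limit, term by term, in (\ref{def sol GSP}$-jv$) written for $(x_{n},k_{n})$. For the Stieltjes term I split $\int_{s}^{t}\langle y-x_{n},dk_{n}\rangle=\int_{s}^{t}\langle y-x,dk_{n}\rangle+\int_{s}^{t}\langle x-x_{n},dk_{n}\rangle$: the first integral converges to $\int_{s}^{t}\langle y-x,dk\rangle$ by the standard continuity of Riemann--Stieltjes integration with respect to uniformly convergent integrators of uniformly bounded variation ($y-x$ continuous, $k_{n}\to k$ uniformly, $\sup_{n}\updownarrow k_{n}\updownarrow_{T}<\infty$), while the second is bounded by $\Vert x-x_{n}\Vert_{T}\,C_{T,\mathcal{M}}\to 0$; the same reasoning, now with integrators $\updownarrow k_{n}\updownarrow_{\cdot}\to V_{\cdot}$, gives $\int_{s}^{t}|y-x_{n}|^{2}(\rho\,dr+\gamma\,d\updownarrow k_{n}\updownarrow_{r})\to\int_{s}^{t}|y-x|^{2}(\rho\,dr+\gamma\,dV_{r})$. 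For $\int_{s}^{t}\varphi(x_{n}(r))\,dr$ on the left, lower semicontinuity of $\varphi$ gives $\liminf_{n}\varphi(x_{n}(r))\geq\varphi(x(r))$ pointwise, and the $\varphi(x_{n}(r))$ are bounded below uniformly in $n,r$ (by the quadratic lower bound valid for any $(\rho,\gamma)$--semiconvex function), so Fatou's lemma yields $\liminf_{n}\int_{s}^{t}\varphi(x_{n}(r))\,dr\geq\int_{s}^{t}\varphi(x(r))\,dr$; the term $\int_{s}^{t}\varphi(y(r))\,dr$ does not involve $n$ (and the inequality is trivial when it is $+\infty$). Combining the four limits, $(x,k)$ satisfies (\ref{def sol GSP - jv}) with the continuous increasing driver $A_{r}:=\rho r+\gamma V_{r}$, so by Lemma~\ref{echiv for (jv)} it satisfies (\ref{def sol GSP}$-jv$); hence $(x,k)=\mathcal{SP}(\partial^{-}\varphi;x_{0},m)$, which is part (b) (uniqueness of this solution being guaranteed by Theorem~\ref{uniq}).

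I expect the main obstacle to be precisely the passage to the limit in (\ref{def sol GSP}$-jv$): the total variations $\updownarrow k_{n}\updownarrow_{\cdot}$ need not converge to $\updownarrow k\updownarrow_{\cdot}$, only to some larger increasing function $V$, so the exact form of the right-hand side of (\ref{def sol GSP}$-jv$) is not preserved. The way around this is that Lemma~\ref{echiv for (jv)} permits an arbitrary continuous increasing driver, while estimate (c) of Theorem~\ref{main result 2} is exactly what supplies the uniform equicontinuity of $\updownarrow k_{n}\updownarrow_{\cdot}$ needed to extract such a $V$; the remaining ingredients (the uniform bounds, the Cauchy property, and the easy conditions (j)--(jjj)) are routine consequences of Theorem~\ref{main result 2} and lower semicontinuity.
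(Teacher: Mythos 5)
Your proposal is correct and follows essentially the same route as the paper: compactness of $\mathcal{M}=\{m,m_{n}\}$ plus the estimates of Theorem \ref{main result 2} give the uniform variation bound and the Cauchy property, Arzel\`a--Ascoli applied to $\left\updownarrow k_{n}\right\updownarrow$ produces the increasing limit driver, and Lemma \ref{echiv for (jv)} converts the limiting inequality into (\ref{def sol GSP}$-jv$). The only difference is that you spell out the term-by-term passage to the limit (Stieltjes integrals with uniformly bounded-variation integrators, Fatou for $\int\varphi(x_{n})$) which the paper compresses into a single ``Clearly''.
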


\begin{proof}
Let us fix arbitrary $T>0$. The set $\mathcal{M}=\left\{ m,m_{n}:n\in
\mathbb{N}^{\ast }\right\} $ is a compact subset of $C\left( \left[ 0,T%
\right] ;\mathbb{R}^{d}\right) $. If $C_{T,m}$ is the constant defined by (%
\ref{main result 2_def}), then, using (\ref{technical result 1}), it follows
that%
\begin{equation*}
C_{T,\mathcal{M}}:=\sup_{\nu \in \mathcal{M}}C_{T,\nu }<\infty .
\end{equation*}%
Also%
\begin{equation*}
\boldsymbol{\upmu}_{\mathcal{M}}\left( \varepsilon \right) :=\sup_{\nu \in
\mathcal{M}}\boldsymbol{\upmu}_{\nu }\left( \varepsilon \right) \searrow 0,\;%
\text{as\ }\varepsilon \searrow 0.
\end{equation*}%
Let $a>0$ be such that $\left\vert x_{0n}\right\vert \leq a$. By the
estimates established in Theorem \ref{main result 2} we obtain: for all $%
n,l\in \mathbb{N}^{\ast }$ and for all $s,t\in \left[ 0,T\right] ,$ $s\leq t$%
,%
\begin{equation*}
\begin{array}{l}
\left\Vert x_{n}\right\Vert _{T}+\left\updownarrow k_{n}\right\updownarrow
_{T}\leq a+C_{T,\mathcal{M}}~,\smallskip \\
\left\vert x_{n}\left( t\right) -x_{n}\left( s\right) \right\vert
+\left\updownarrow k_{n}\right\updownarrow _{t}-\left\updownarrow
k_{n}\right\updownarrow _{s}\leq C_{T,\mathcal{M}}\cdot \sqrt{\boldsymbol{%
\upmu}_{\mathcal{M}}\left( t-s\right) }%
\end{array}%
\end{equation*}%
and%
\begin{equation*}
\left\Vert x_{n}-x_{l}\right\Vert _{T}+\left\Vert k_{n}-k_{l}\right\Vert
_{T}\leq C_{T,\mathcal{M}}\cdot \big[\left\vert x_{0n}-x_{0l}\right\vert +%
\sqrt{||m_{n}-m_{l}||_{T}}~\big].
\end{equation*}%
Therefore there exist $x,k,A\in C\left( \mathbb{R}_{+};\mathbb{R}^{d}\right)
$ such that%
\begin{equation*}
x_{n}\rightarrow x,\quad k_{n}\rightarrow k,\text{ in }C(\left[ 0,T\right] ;%
\mathbb{R}^{d})\text{, as }n\rightarrow \infty ,
\end{equation*}%
and, by Arzel\`{a}--Ascoli's Theorem, on a subsequence denoted also with $%
\left\updownarrow k_{n}\right\updownarrow $,%
\begin{equation*}
\left\updownarrow k_{n}\right\updownarrow \rightarrow A\text{, in }C(\left[
0,T\right] ;\mathbb{R}^{d})\text{, as }n\rightarrow \infty ,
\end{equation*}%
where $A$ is an increasing function starting from zero.

Clearly, the pair $\left( x,k\right) $ satisfies (\ref{def sol GSP}$%
-j,jj,jjj $) and (\ref{def sol GSP - jv}), which means, using Lemma \ref%
{echiv for (jv)}, that $\left( x,k\right) =\mathcal{SP}\left(
\partial^{-}\varphi;x_{0},m\right) $.\hfill$\smallskip$
\end{proof}

\begin{theorem}
\label{main result 1}Let assumptions (\ref{assumpt input}), (\ref{assumpt
phi}), (\ref{assumpt phi 2}) and (\ref{assumpt phi 3}) be satisfied.

Then the generalized Skorohod problem%
\begin{equation*}
\left\{
\begin{array}{l}
x\left( t\right) +k\left( t\right) =x_{0}+m\left( t\right) ,\quad t\geq
0,\smallskip \\
dk\left( t\right) \in \partial ^{-}\varphi \left( x\left( t\right) \right)
\left( dt\right)%
\end{array}%
\right.
\end{equation*}%
has a unique solution $\left( x,k\right) $, in the sense of Definition \ref%
{sol GSP} (and we write $\left( x,k\right) =\mathcal{SP}\left( \partial
^{-}\varphi ;x_{0},m\right) $).
\end{theorem}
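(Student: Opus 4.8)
The uniqueness is already contained in Theorem~\ref{uniq}, so the plan is to prove only the existence of a solution, and to do so by constructing it as the uniform limit of the iterates of an implicit time-stepping (``catching-up'') scheme, in the spirit of Moreau's algorithm for the sweeping process but adapted to the $\left( \rho ,\gamma \right) $--semiconvex functional $\varphi$. Fix $T>0$. For $n\geq 1$ I would take a partition $0=t_{0}^{n}<\cdots <t_{N_{n}}^{n}=T$ of mesh $\left\Vert \Delta _{n}\right\Vert \rightarrow 0$, the mesh being chosen small relative to the modulus of continuity of $m$ (this is what the quantity $\Delta _{m}$ in (\ref{main result 2_def}) anticipates). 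Put $x_{0}^{n}:=x_{0}$ and, having constructed $x_{i}^{n}\in \overline{\mathrm{Dom}\left( \varphi \right) }$, define $x_{i+1}^{n}$ as a minimiser of
\begin{equation*}
z\longmapsto \varphi \left( z\right) +\frac{1}{2\left( t_{i+1}^{n}-t_{i}^{n}\right) }\big|z-x_{i}^{n}-\big(m\left( t_{i+1}^{n}\right) -m\left( t_{i}^{n}\right) \big)\big|^{2}.
\end{equation*}
Existence of such a minimiser follows from the quadratic minorant of $\varphi$ (item 6 of the Remark following Definition~\ref{semiconvex function}) together with lower semicontinuity; and, provided the step $t_{i+1}^{n}-t_{i}^{n}$ is small, the $\left( \rho ,\gamma \right) $--semiconvexity forces the minimiser to satisfy the Euler inequality
\begin{equation*}
\hat{x}_{i+1}^{n}:=\frac{x_{i}^{n}+\big(m\left( t_{i+1}^{n}\right) -m\left( t_{i}^{n}\right) \big)-x_{i+1}^{n}}{t_{i+1}^{n}-t_{i}^{n}}\in \partial ^{-}\varphi \left( x_{i+1}^{n}\right) ,
\end{equation*}
so in particular $x_{i+1}^{n}\in \mathrm{Dom}\left( \varphi \right) $. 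Interpolating the nodal values affinely one obtains continuous $x_{n},k_{n}:\left[ 0,T\right] \rightarrow \mathbb{R}^{d}$ with $x_{n}+k_{n}=x_{0}+m_{n}$ (where $m_{n}$ is the nodal interpolation of $m$, so $m_{n}\rightarrow m$ uniformly), $k_{n}\in BV$, and — after multiplying the subdifferential inequality of Definition~\ref{semiconvex function}$\,\mathrm{a}_{3}$ by $t_{i+1}^{n}-t_{i}^{n}$ and summing — a Riemann-sum version of the inequality (\ref{def sol GSP - jv}) with $dA_{r}$ replaced by $\rho \,dr+\gamma \,d\!\updownarrow \!k_{n}\!\updownarrow _{r}$.

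The heart of the matter is to establish, uniformly in $n$, the a priori estimates of Theorem~\ref{main result 2}: a bound $\updownarrow \!k_{n}\!\updownarrow _{T}\leq C_{T,m}$ on the total variation, a bound on $\left\Vert x_{n}\right\Vert _{T}$, and the joint modulus estimate $\left\vert x_{n}\left( t\right) -x_{n}\left( s\right) \right\vert +\updownarrow \!k_{n}\!\updownarrow _{t}-\updownarrow \!k_{n}\!\updownarrow _{s}\leq C_{T,m}\sqrt{\boldsymbol{\upmu}_{m}\left( t-s\right) }$. I expect the total-variation bound to be the main obstacle, and it is exactly here that the $\left( \gamma ,\delta ,\sigma \right) $--shifted uniform interior ball condition (\ref{assumpt phi 3}) is used: inserting in the discrete subdifferential inequality the test point $x_{i}^{n}+v_{y}$, with $y$ and $v_{y},\lambda _{y}$ as in (\ref{SUIBC cond}), and combining with the oscillation bound (\ref{assumpt phi 2}) on $\varphi$, one controls $\lambda _{y}\left\vert \hat{x}_{i+1}^{n}\right\vert$ — hence the length $\big(t_{i+1}^{n}-t_{i}^{n}\big)\left\vert \hat{x}_{i+1}^{n}\right\vert$ of the reflection increment — by the increment of $m$ plus lower-order terms; summing and applying a discrete Gronwall argument gives the bound $C_{T,m}$. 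This is the non-convex, subdifferential analogue of Lemma~1.2 of \cite{li-sn/84}, i.e.\ of Lemma~\ref{difference of var 2} invoked in Remark~\ref{remark 2}; one might in fact first prove the estimates for genuine solutions (Theorem~\ref{main result 2}) and then transcribe the argument at the discrete level.

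With these uniform estimates in hand, Arzel\`{a}--Ascoli (applied to $x_{n}$, $k_{n}$ and, thanks to estimate $(c)$, also to the increasing functions $\updownarrow \!k_{n}\!\updownarrow$) yields a subsequence along which $x_{n}\rightarrow x$ and $k_{n}\rightarrow k$ uniformly on $\left[ 0,T\right] $ and $\updownarrow \!k_{n}\!\updownarrow \rightarrow A$ uniformly, with $A$ continuous, increasing, $A\left( 0\right) =0$. Passing to the limit in the discrete inequality — using the lower semicontinuity of $\varphi$, the uniform integrability of $r\mapsto \varphi \left( x_{n}\left( r\right) \right) $ (a consequence of (\ref{assumpt phi 2}) and the uniform bound on $\left\Vert x_{n}\right\Vert _{T}$), and the Fatou-type lemmas of the Appendix to handle $\int_{s}^{t}\varphi \left( x_{n}\left( r\right) \right) dr$ — one checks that $\left( x,k\right) $ satisfies (\ref{def sol GSP}$-j,jj,jjj$) and the inequality (\ref{def sol GSP - jv}) with this $A$; by Lemma~\ref{echiv for (jv)} this means precisely that $\left( x,k\right) =\mathcal{SP}\left( \partial ^{-}\varphi ;x_{0},m\right) $ on $\left[ 0,T\right] $. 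A diagonal extraction over $T\nearrow \infty $, together with the uniqueness already established, then produces a single solution on all of $\mathbb{R}_{+}$. (Alternatively, since piecewise-affine inputs are dense in $C\left( \mathbb{R}_{+};\mathbb{R}^{d}\right) $ and on each affine piece the problem reduces to the gradient-flow inclusion $x^{\prime }+\partial ^{-}\varphi \left( x\right) \ni \text{const}$ for the $\phi$-convex $\varphi$, one could first treat piecewise-affine $m$ and then invoke Corollary~\ref{main result 3} to reach a general continuous $m$.)
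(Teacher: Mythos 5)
Your proposal is correct in outline but takes a genuinely different route from the paper, and most of the real work in your route is deferred rather than done. The paper never discretizes time: it approximates the input by $m_{n}\in C^{1}(\mathbb{R}_{+};\mathbb{R}^{d})$ with $\left\Vert m_{n}-m\right\Vert _{T}\rightarrow 0$, quotes the existence results of \cite{de-ma-to/85} and \cite{ro-sa/06} for regular inputs to obtain \emph{exact} solutions $\left( x_{n},k_{n}\right) =\mathcal{SP}\left( \partial ^{-}\varphi ;x_{0},m_{n}\right) $, and concludes immediately by Corollary \ref{main result 3}; all the hard analysis is thus concentrated once and for all in Theorem \ref{main result 2}, which is stated for exact solutions. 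Your catching-up scheme replaces the citation by a self-contained construction, which is a genuine gain in self-sufficiency, but note the price: your interpolants are \emph{not} solutions of $\mathcal{SP}\left( \partial ^{-}\varphi ;x_{0},\cdot \right) $ for any input, only approximate solutions of a Riemann-sum version of $\left( jv\right) $, so neither Theorem \ref{main result 2} nor Corollary \ref{main result 3} applies to them as stated; the entire chain Lemma \ref{difference of var 1} -- Lemma \ref{difference of var 2} -- Steps 1--4 of the proof of Theorem \ref{main result 2} (modulus-of-continuity estimate, the SUIBC variation bound, the adapted partition by the times $T_{i},t_{i}$ and the bound on their number, plus the bootstrap keeping the nodal oscillation below $\delta$ on each block) must be re-run at the discrete level. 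Your sketch does identify the right mechanism --- the test point $x_{i+1}^{n}+v_{y}+\lambda _{y}\alpha $ yielding $\sigma \left\vert \hat{x}_{i+1}^{n}\right\vert \leq -\left\langle v_{y},\hat{x}_{i+1}^{n}\right\rangle +3L+4\rho $, then summation and a discrete Gronwall argument --- and this transcription does go through, but ``transcribe the argument'' is where most of the proof lives. Two smaller technical points: existence of the minimizers and the Euler inclusion for the Fr\'{e}chet subdifferential indeed require the step size to be small relative to the quadratic minorant of $\varphi $, as you say; and in the limit passage you should integrate the one-step subdifferential inequality against $y\left( r\right) $ over $\left[ t_{i}^{n},t_{i+1}^{n}\right] $ rather than use Riemann sums of $\varphi \left( y\left( \cdot \right) \right) $, since (\ref{assumpt phi 2}) does not make $\varphi \circ y$ continuous. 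Finally, your parenthetical alternative (regular --- piecewise-affine or $C^{1}$ --- inputs, the gradient-flow results for $\phi $-convex functions, then Corollary \ref{main result 3}) is essentially the paper's own proof and, given that Theorem \ref{main result 2} and Corollary \ref{main result 3} are already available, is by far the shorter path.
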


Before giving the proof of the main results, Theorems \ref{main result 2}
and \ref{main result 1}, let us examine the particular case of the indicator
function of the closed set $E$ (which yields the classical Skorohod problem).

If $E$ satisfies the $r_{0}-UEBC$, then, by Lemmas \ref{UEBC - semiconvex}
and \ref{normal cone} and Definition \ref{semiconvex function}, the set $E$
is $\tfrac{1}{2r_{0}}$--semiconvex and the indicator function $I_{E}\left(
x\right) $ is a $(0,\tfrac{1}{2r_{0}})$--semiconvex function. Hence
assumptions (\ref{assumpt phi}) and (\ref{assumpt phi 2}) are satisfied.

We write the definition of the solution of the Skorohod problem in the case
of the indicator function.

\begin{definition}[Skorohod problem]
\label{sol SP}Let $E\subset\mathbb{R}^{d}$ be a set satisfying the $%
r_{0}-UEBC$. A pair $\left( x,k\right) $ is a solution of the Skorohod
problem if $x,k:\mathbb{R}_{+}\rightarrow$ $\mathbb{R}^{d}$ are continuous
functions and%
\begin{equation}
\begin{array}{rl}
\left( j\right) & x\left( t\right) \in E,\smallskip \\
\left( jj\right) & k\in BV_{loc}\left( \mathbb{R}_{+};\mathbb{R}^{d}\right) ,%
\text{\ }k\left( 0\right) =0\text{,}\smallskip \\
\left( jjj\right) & x\left( t\right) +k\left( t\right) =x_{0}+m\left(
t\right) ,\smallskip \\
\left( jv\right) & \forall~0\leq s\leq t\leq T,\;\forall y\in C\left(
\mathbb{R}_{+};E\right) \smallskip \\
& \displaystyle\int_{s}^{t}\left\langle y\left( r\right) -x\left( r\right)
,dk\left( r\right) \right\rangle \leq\dfrac{1}{2r_{0}}\int_{s}^{t}\left\vert
y\left( r\right) -x\left( r\right) \right\vert ^{2}d\left\updownarrow
k\right\updownarrow _{r}~.%
\end{array}
\label{def sol SP}
\end{equation}
\end{definition}

The following theorem is a consequence of the main existence Theorem \ref%
{main result 1}.

\begin{theorem}
\label{main result 4}Let $x_{0}\in E$ and $m:\mathbb{R}_{+}\rightarrow
\mathbb{R}^{d}$ be a continuous function such that $m\left( 0\right) =0$. If
$E$ satisfies the $r_{0}-UEBC$, for some $r_{0}>0$, and the shifted uniform
interior ball condition, then there exists a unique solution of the Skorohod
problem, in the sense of Definition \ref{sol SP}.

Moreover, each one of the following conditions are equivalent with (\ref{def
sol SP}$-jv$):%
\begin{equation}
\left( jv^{\prime}\right) \left\{
\begin{array}{l}
\left\updownarrow k\right\updownarrow _{t}=\displaystyle\int_{0}^{t}\mathbf{1%
}_{x\left( s\right) \in\mathrm{Bd}\left( E\right) }d\left\updownarrow
k\right\updownarrow _{s}~,\smallskip \\
k\left( t\right) =\displaystyle\int_{0}^{t}n_{x\left( s\right)
}d\left\updownarrow k\right\updownarrow _{s},\;\smallskip \\
\quad\quad\text{where }n_{x\left( s\right) }\in N_{E}\left( x\left( s\right)
\right) \;\text{and }|n_{x\left( s\right) }|=1,\;d\left\updownarrow
k\right\updownarrow _{s}\text{--a.e.,}%
\end{array}
\right.  \label{def sol SP - jv}
\end{equation}
and%
\begin{equation}
\begin{array}{ll}
\left( jv^{\prime\prime}\right) & \exists~\beta>0\text{ such that }\forall~y:%
\mathbb{R}_{+}\rightarrow E\text{ continuous,}\smallskip \\
& \quad\displaystyle\int_{s}^{t}\left\langle y\left( r\right) -x\left(
r\right) ,dk\left( r\right) \right\rangle \leq\beta\int_{s}^{t}\left\vert
y\left( r\right) -x\left( r\right) \right\vert ^{2}d\left\updownarrow
k\right\updownarrow _{r}.%
\end{array}
\label{def sol SP - jv'}
\end{equation}
\end{theorem}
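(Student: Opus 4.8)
The plan is to obtain existence and uniqueness as a direct specialization of Theorem~\ref{main result 1} to the indicator function $\varphi=I_{E}$, and then to prove the equivalence of $(jv)$, $(jv^{\prime})$, $(jv^{\prime\prime})$ by establishing the cycle $(jv^{\prime})\Rightarrow(jv)\Rightarrow(jv^{\prime\prime})\Rightarrow(jv^{\prime})$.

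First I would check that $\varphi=I_{E}$ meets all the hypotheses of Theorem~\ref{main result 1}. Since $E$ satisfies the $r_{0}-UEBC$, Lemmas~\ref{UEBC - semiconvex} and~\ref{normal cone} together with~(\ref{particular Frechet subdifferential}) show that $I_{E}$ is a proper, lower semicontinuous, $(0,\tfrac{1}{2r_{0}})$--semiconvex function, so~(\ref{assumpt phi}) holds; assumption~(\ref{assumpt phi 2}) holds with any $L\geq0$ because $\varphi\equiv0$ on $\mathrm{Dom}(\varphi)=E$; and~(\ref{assumpt phi 3}) is precisely the assumed shifted uniform interior ball condition. I would also observe that Definition~\ref{sol SP} is Definition~\ref{sol GSP} read for $\varphi=I_{E}$: condition~(\ref{def sol GSP}$-j$) becomes $x(t)\in\overline{E}=E$, and in~(\ref{def sol GSP}$-jv$) the term $\int_{s}^{t}\varphi(y(r))\,dr$ is $+\infty$ unless the continuous test function $y$ is $E$--valued, in which case, with $\rho=0$ and $\gamma=\tfrac{1}{2r_{0}}$, the inequality reduces to~(\ref{def sol SP}$-jv$). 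Hence Theorem~\ref{main result 1} yields existence and uniqueness of the solution of the Skorohod problem.

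Turning to the equivalences, the implication $(jv^{\prime})\Rightarrow(jv)$ follows by substituting $k(t)=\int_{0}^{t}n_{x(s)}\,d\updownarrow k\updownarrow_{s}$ into $\int_{s}^{t}\langle y(r)-x(r),dk(r)\rangle$: since $n_{x(r)}\in N_{E}(x(r))$ with $|n_{x(r)}|=1$ for $d\updownarrow k\updownarrow_{r}$--a.e.\ $r$, the characterization of $N_{E}$ in Lemma~\ref{normal cone} bounds the integrand by $\tfrac{1}{2r_{0}}|y(r)-x(r)|^{2}$, and integrating against $d\updownarrow k\updownarrow$ gives~(\ref{def sol SP}$-jv$); and $(jv)\Rightarrow(jv^{\prime\prime})$ is immediate with $\beta=\tfrac{1}{2r_{0}}$. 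The substantive implication is $(jv^{\prime\prime})\Rightarrow(jv^{\prime})$. Here I would use the polar decomposition of the $\mathbb{R}^{d}$--valued measure $dk$ to write $dk(r)=\theta(r)\,d\updownarrow k\updownarrow_{r}$ with $|\theta(r)|=1$ for $d\updownarrow k\updownarrow_{r}$--a.e.\ $r$. Fixing $z\in E$ and inserting the constant test function $y\equiv z$ into $(jv^{\prime\prime})$ yields $\int_{s}^{t}\big(\beta|z-x(r)|^{2}-\langle z-x(r),\theta(r)\rangle\big)\,d\updownarrow k\updownarrow_{r}\geq0$ for all $0\leq s\leq t$, so the density of this nonnegative indefinite integral with respect to $d\updownarrow k\updownarrow$ is nonnegative $d\updownarrow k\updownarrow_{r}$--a.e., i.e.\ $\langle\theta(r),z-x(r)\rangle\leq\beta|z-x(r)|^{2}$ off a $d\updownarrow k\updownarrow$--null set. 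Running $z$ through a countable dense subset of $E$, intersecting the exceptional sets, and using continuity in $z$ of both sides, I obtain: for $d\updownarrow k\updownarrow_{r}$--a.e.\ $r$, $\langle\theta(r),z-x(r)\rangle\leq\beta|z-x(r)|^{2}$ for every $z\in E$. Dividing by $|z-x(r)|$ and letting $z\to x(r)$ within $E$, the definition of the Fr\'{e}chet subdifferential gives $\theta(r)\in\partial^{-}I_{E}(x(r))=N_{E}(x(r))$ (by~(\ref{particular Frechet subdifferential})); and since $|\theta(r)|=1\neq0$ while $\partial^{-}I_{E}$ reduces to $\{0\}$ at interior points of $E$, one also has $x(r)\in\mathrm{Bd}(E)$ for $d\updownarrow k\updownarrow_{r}$--a.e.\ $r$. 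Setting $n_{x(r)}:=\theta(r)$ then gives $k(t)=\int_{0}^{t}n_{x(s)}\,d\updownarrow k\updownarrow_{s}$ and $\updownarrow k\updownarrow_{t}=\int_{0}^{t}\mathbf{1}_{x(s)\in\mathrm{Bd}(E)}\,d\updownarrow k\updownarrow_{s}$, which is $(jv^{\prime})$.

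I expect the main obstacle to be this last step: converting the family of integral inequalities, indexed by subintervals and by all continuous $E$--valued test functions, into a single $d\updownarrow k\updownarrow$--almost everywhere pointwise inclusion $\theta(r)\in N_{E}(x(r))$. The delicate ingredients are the polar decomposition with $|\theta|=1$ a.e.\ against the total variation measure, selecting the exceptional null set uniformly in the test point via separability of $E$, and the observation that the quadratic remainder $\beta|z-x(r)|^{2}$ is negligible at first order, so that the (possibly large) constant $\beta$ in $(jv^{\prime\prime})$ already forces membership in $N_{E}(x(r))$; the measure--theoretic mechanics run parallel to those in the proof of Lemma~\ref{echiv for (jv)}. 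Everything else is routine bookkeeping.
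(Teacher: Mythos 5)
Your proposal is correct and follows essentially the same route as the paper: existence and uniqueness by specializing Theorem \ref{main result 1} to $\varphi=I_{E}$ (checking (\ref{assumpt phi}), (\ref{assumpt phi 2}), (\ref{assumpt phi 3}) exactly as the paper does), then $(jv^{\prime})\Rightarrow(jv)$ via the normal-cone inequality of Lemmas \ref{UEBC - semiconvex}--\ref{normal cone}, $(jv)\Rightarrow(jv^{\prime\prime})$ trivially with $\beta=\tfrac{1}{2r_{0}}$, and $(jv^{\prime\prime})\Rightarrow(jv^{\prime})$ via the polar decomposition $dk=\theta\,d\left\updownarrow k\right\updownarrow$, an a.e.\ differentiation argument in the spirit of Lemma \ref{echiv for (jv)}, and the first-order (Fr\'echet) characterization $\partial^{-}I_{E}=N_{E}$. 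The one genuine divergence is how the boundary-support identity $\left\updownarrow k\right\updownarrow_{t}=\int_{0}^{t}\mathbf{1}_{x(s)\in\mathrm{Bd}(E)}d\left\updownarrow k\right\updownarrow_{s}$ is obtained: the paper proves it separately (inequality (\ref{technical ineq 1})) by testing with $y=x+\lambda\alpha$ on intervals where $x$ stays in $\mathrm{Int}(E)$ and letting $\lambda\to0$, whereas you deduce it directly from $\theta(r)\in N_{E}(x(r))$ together with $|\theta(r)|=1$ $d\left\updownarrow k\right\updownarrow$--a.e.\ and the fact that the normal cone (equivalently $\partial^{-}I_{E}$) is $\{0\}$ at interior points; your shortcut is valid and slightly leaner, at the cost of leaning on the a.e.\ inclusion rather than a self-contained localization argument. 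Your use of constant test functions $y\equiv z$ over a countable dense subset of $E$, with a single exceptional null set, is also a sound (and arguably more explicit) substitute for the paper's terser appeal to Lebesgue--Besicovitch with general continuous $E$--valued $y$.
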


\begin{proof}
The uniqueness is ensures by Theorem \ref{main result 1}. For a direct proof
it is sufficient to use inequality \ref{uniq ineq 2} and the next one: if $%
\left( x,k\right) =\mathcal{SP}\left( E;x_{0},m\right) $ and $(\hat{x},\hat{k%
})=\mathcal{SP}\left( E;\hat{x}_{0},\hat{m}\right) $, then from (\ref{def
sol SP}) we get (see also Lemma \ref{uniq lemma})%
\begin{equation*}
\langle x\left( t\right) -\hat{x}\left( t\right) ,dk\left( t\right) -d\hat{k}%
\left( t\right) \rangle +\dfrac{1}{2r_{0}}\left\vert x\left( t\right) -\hat{x%
}\left( t\right) \right\vert ^{2}(d\left\updownarrow k\right\updownarrow
_{t}+d\updownarrow \hspace{-0.09cm}\hat{k}\hspace{-0.05cm}\updownarrow
_{t})\geq 0.
\end{equation*}

The existence is due to Theorem \ref{main result 1} (but, for a direct proof
of the existence, with condition (\ref{def sol SP}$-jv$) replaced by (\ref%
{def sol SP - jv}), we refer the reader to \cite{li-sn/84}).

Proof of (\ref{def sol SP - jv}) $\Longrightarrow $ (\ref{def sol SP}$-jv$):
using Lemma \ref{UEBC - semiconvex} we see%
\begin{equation*}
\begin{array}{l}
\displaystyle\int_{s}^{t}\left\langle y\left( r\right) -x\left( r\right)
,dk\left( r\right) \right\rangle =\int_{s}^{t}\left\langle y\left( r\right)
-x\left( r\right) ,n_{x\left( r\right) }d\left\updownarrow
k\right\updownarrow _{r}\right\rangle \smallskip \\
=\displaystyle\int_{s}^{t}\left\langle y\left( r\right) -x\left( r\right)
,n_{x\left( r\right) }\mathbf{1}_{x\left( r\right) \in \mathrm{Bd}\left(
E\right) }d\left\updownarrow k\right\updownarrow _{r}\right\rangle \smallskip
\\
\quad \leq \dfrac{1}{2r_{0}}\displaystyle\int_{s}^{t}\left\vert n_{x\left(
r\right) }\right\vert \left\vert y\left( r\right) -x\left( r\right)
\right\vert ^{2}\mathbf{1}_{x\left( s\right) \in \mathrm{Bd}\left( E\right)
}d\left\updownarrow k\right\updownarrow _{r}\smallskip \\
\quad \leq \dfrac{1}{2r_{0}}\displaystyle\int_{s}^{t}\left\vert y\left(
r\right) -x\left( r\right) \right\vert ^{2}d\left\updownarrow
k\right\updownarrow _{r}~.%
\end{array}%
\end{equation*}

Clearly (\ref{def sol SP}$-jv$) $\Longrightarrow$ (\ref{def sol SP - jv'}).

Proof of (\ref{def sol SP - jv'}) $\Longrightarrow $ (\ref{def sol SP - jv}%
): let $\left[ s,t\right] $ be an interval such that $x\left( r\right) \in
\mathrm{Int}\left( E\right) $ for all $r\in \lbrack s,t]$. Then there exists
$\delta =\delta _{s,t}>0$ such that%
\begin{equation*}
\inf_{r\in \left[ s,t\right] }\mathrm{d}_{\mathrm{Bd}\left( E\right) }\big(%
x\left( r\right) \big)\geq \delta .
\end{equation*}%
Let $\lambda \in \left[ 0,\delta \right] $ and $\alpha \in C\left( \left[ 0,T%
\right] ;\mathbb{R}^{d}\right) $ such that $\left\Vert \alpha \right\Vert
_{T}\leq 1$. Setting $y\left( r\right) =x\left( r\right) +\lambda \alpha
\left( r\right) $ in (\ref{def sol SP - jv'}) we obtain%
\begin{equation*}
\int_{s}^{t}\left\langle \alpha \left( r\right) ,dk\left( r\right)
\right\rangle \leq \beta \lambda \int_{s}^{t}d\left\updownarrow
k\right\updownarrow _{r}~.
\end{equation*}%
Hence, passing to the limit, for $\lambda \rightarrow 0$, and taking $%
\sup_{\left\Vert \alpha \right\Vert _{T}\leq 1}$, we deduce the implication:%
\begin{equation}
x\left( r\right) \in \mathrm{Int}\left( E\right) ,\;\forall r\in \lbrack
s,t]\;\Longrightarrow \;\left\updownarrow k\right\updownarrow
_{t}-\left\updownarrow k\right\updownarrow _{s}=0.  \label{technical ineq 1}
\end{equation}%
Let $\ell \left( r\right) $ be a measurable function such that $\left\vert
\ell \left( r\right) \right\vert =1$, $d\left\updownarrow
k\right\updownarrow _{r}$--a.e. and%
\begin{equation*}
k\left( t\right) =\int_{0}^{t}\ell \left( r\right) d\left\updownarrow
k\right\updownarrow _{r}~.
\end{equation*}%
Since (\ref{def sol SP - jv'}) holds for all $0\leq s\leq t$, we deduce,
using the Lebesgue--Besicovitch theorem, that%
\begin{equation*}
\beta \left\vert y\left( r\right) -x\left( r\right) \right\vert
^{2}-\left\langle \ell \left( r\right) ,y\left( r\right) -x\left( r\right)
\right\rangle \geq 0,\quad d\left\updownarrow k\right\updownarrow _{r}\text{%
--a.e.,}
\end{equation*}%
for any $y\in C\left( \left[ 0,T\right] ;E\right) $.

Therefore, from Lemma \ref{normal cone}, we infer that%
\begin{equation}
\ell \left( r\right) \in N_{E}\left( x\left( r\right) \right)
,\;d\left\updownarrow k\right\updownarrow _{r}\text{--a.e.}
\label{technical ineq 2}
\end{equation}%
We have thus proved inequality (\ref{def sol SP - jv}), since we have (\ref%
{technical ineq 1}) and (\ref{technical ineq 2}).\hfill
\end{proof}

\section{Generalized Skorohod problem: proofs}

In order to prove Theorem \ref{main result 1} let us first prove some
auxiliary results.

Let $\left( x,k\right) =\mathcal{SP}\left( \partial ^{-}\varphi
;x_{0},m\right) $ and $y\in C\left( \mathbb{R}_{+};E\right) $, where $E=%
\mathrm{Dom}\left( \varphi \right) $. From (\ref{assumpt phi 2}) and (\ref%
{def sol GSP}--$jv$) we have, for all $0\leq s\leq t,$%
\begin{equation}
\begin{array}{l}
\displaystyle\int_{s}^{t}\left\langle y\left( r\right) -x\left( r\right)
,dk\left( r\right) \right\rangle \leq L\left( t-s\right)
+L\int_{s}^{t}\left\vert y\left( r\right) -x\left( r\right) \right\vert
dr\medskip \\
\quad \quad \quad \quad \quad \quad \quad \quad \quad \quad \quad \quad
\quad \quad \quad \quad \quad +\displaystyle\int_{s}^{t}\left\vert y\left(
r\right) -x\left( r\right) \right\vert ^{2}\left( \rho dr+\gamma
d\left\updownarrow k\right\updownarrow _{r}\right) .%
\end{array}
\label{difference of var_ineq 1}
\end{equation}%
Suppose that $x\left( r\right) \in \mathrm{Int}\left( \mathrm{Dom}\left(
\varphi \right) \right) $ for all $r\in \left[ s,t\right] ,$ and let%
\begin{equation*}
0<b\leq \inf_{r\in \left[ s,t\right] }\mathrm{d}_{\mathrm{Bd}\left( E\right)
}\left( x\left( r\right) \right) .
\end{equation*}%
Write $y\left( r\right) =x\left( r\right) +\lambda b\alpha \left( r\right) $
with $\alpha \in C\left( \mathbb{R}_{+};\mathbb{R}^{d}\right) ,$ $\left\Vert
\alpha \right\Vert _{\left[ s,t\right] }\leq 1$ and $0<\lambda <1.$ Hence
the above inequality becomes, for $\lambda =\left[ \left( 1+\gamma \right)
\left( 1+b\right) ^{2}\right] ^{-1}$%
\begin{align*}
\lambda b\int_{s}^{t}\left\langle \alpha \left( r\right) ,dk\left( r\right)
\right\rangle & \leq \left( L+Lb\right) \left( t-s\right) +\lambda ^{2}b^{2}
\left[ \rho \left( t-s\right) +\gamma \left( \left\updownarrow
k\right\updownarrow _{t}-\left\updownarrow k\right\updownarrow _{s}\right) %
\right] \\
& \leq \left( L+Lb+\lambda ^{2}b^{2}\rho \right) \left( t-s\right) +\frac{%
\lambda b}{1+b}\left( \left\updownarrow k\right\updownarrow
_{t}-\left\updownarrow k\right\updownarrow _{s}\right) .
\end{align*}%
Taking the supremum over all $\alpha $ such that $\left\Vert \alpha
\right\Vert _{\left[ s,t\right] }\leq 1$ we see that%
\begin{equation*}
\frac{\lambda b^{2}}{1+b}\left( \left\updownarrow k\right\updownarrow
_{t}-\left\updownarrow k\right\updownarrow _{s}\right) \leq \left(
L+Lb+\lambda ^{2}b^{2}\rho \right) \left( t-s\right) .
\end{equation*}%
Consequently, the following result is proved:

\begin{lemma}
\label{difference of var 1}Let $\varphi $ such that assumption (\ref{assumpt
phi 2}) is satisfied and $\left( x,k\right) =\mathcal{SP}\left( \partial
^{-}\varphi ;x_{0},m\right) $. If $x\left( r\right) \in \mathrm{Int}\left(
\mathrm{Dom}\left( \varphi \right) \right) $, for all $r\in \left[ s,t\right]
$, then there exists a positive constant $C=C\left( L,\rho ,\gamma ,b\right)
$ such that%
\begin{equation*}
\left\updownarrow k\right\updownarrow _{t}-\left\updownarrow
k\right\updownarrow _{s}\leq C\left( t-s\right)
\end{equation*}%
where%
\begin{equation*}
0<b\leq \inf_{r\in \left[ s,t\right] }\mathrm{d}_{\mathrm{Bd}\left( E\right)
}\left( x\left( r\right) \right) .
\end{equation*}
\end{lemma}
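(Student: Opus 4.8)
The plan is to test the subdifferential inequality (\ref{def sol GSP}$-jv$) against a suitably localised perturbation of $x$ in order to extract an estimate for the increments of $\left\updownarrow k\right\updownarrow$ on $[s,t]$. First I would take an arbitrary continuous $y:\mathbb{R}_{+}\rightarrow E$, with $E=\mathrm{Dom}(\varphi)$, in (\ref{def sol GSP}$-jv$) and bound $\varphi(x(r))-\varphi(y(r))$ from below by $-L-L|y(r)-x(r)|$ using the growth assumption (\ref{assumpt phi 2}); this produces exactly inequality (\ref{difference of var_ineq 1}),
\[
\int_{s}^{t}\langle y(r)-x(r),dk(r)\rangle\le L(t-s)+L\int_{s}^{t}|y(r)-x(r)|\,dr+\int_{s}^{t}|y(r)-x(r)|^{2}\bigl(\rho\,dr+\gamma\,d\left\updownarrow k\right\updownarrow_{r}\bigr).
\]

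Next, since by hypothesis $\mathrm{d}_{\mathrm{Bd}(E)}(x(r))\ge b>0$ on $[s,t]$, I would specialise to $y(r)=x(r)+\lambda b\,\alpha(r)$ with $\alpha\in C(\mathbb{R}_{+};\mathbb{R}^{d})$, $\|\alpha\|_{[s,t]}\le 1$, and $0<\lambda<1$; such a $y$ is admissible because then $y(r)\in\overline{B}(x(r),\lambda b)\subset\mathrm{Int}(E)\subset E$ on $[s,t]$ (and $\alpha$ may be extended so that $y$ is globally valued in $E$). Inserting this and using $|y(r)-x(r)|\le\lambda b<b$ reduces the estimate to
\[
\lambda b\int_{s}^{t}\langle\alpha(r),dk(r)\rangle\le(L+Lb+\lambda^{2}b^{2}\rho)(t-s)+\lambda^{2}b^{2}\gamma\bigl(\left\updownarrow k\right\updownarrow_{t}-\left\updownarrow k\right\updownarrow_{s}\bigr).
\]

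The decisive step is to choose $\lambda$ so that the $\gamma$-term on the right is absorbed into the left after optimising over $\alpha$: taking $\lambda=[(1+\gamma)(1+b)^{2}]^{-1}$ one has the elementary bound $\lambda b\gamma\le 1/(1+b)$, hence $\lambda^{2}b^{2}\gamma\le\frac{\lambda b}{1+b}$. I would then pass to $\sup_{\|\alpha\|_{[s,t]}\le 1}$; by the duality between $C([0,T];\mathbb{R}^{d})$ and $BV([0,T];\mathbb{R}^{d})$ recalled above, the left-hand side becomes $\lambda b\bigl(\left\updownarrow k\right\updownarrow_{t}-\left\updownarrow k\right\updownarrow_{s}\bigr)$. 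Moving the $\gamma$-term over and using $1-\frac{1}{1+b}=\frac{b}{1+b}$ yields $\frac{\lambda b^{2}}{1+b}\bigl(\left\updownarrow k\right\updownarrow_{t}-\left\updownarrow k\right\updownarrow_{s}\bigr)\le(L+Lb+\lambda^{2}b^{2}\rho)(t-s)$, which is the claim with $C=\frac{(1+b)(L+Lb+\lambda^{2}b^{2}\rho)}{\lambda b^{2}}$, a constant depending only on $L$, $\rho$, $\gamma$, $b$.

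There is no serious obstacle here; the only points requiring a little care are checking that the perturbed $y$ genuinely stays in $\mathrm{Dom}(\varphi)$ (so that $\varphi(y(r))$ is finite and (\ref{assumpt phi 2}) applies), and observing that $\sup_{\alpha}\int_{s}^{t}\langle\alpha,dk\rangle$ recovers the variation of $k$ on the subinterval $[s,t]$ rather than on $[0,t]$ — which is immediate since the test functions $\alpha$ are free on $[s,t]$ and can be taken to vanish outside it.
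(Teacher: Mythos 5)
Your proposal is correct and follows essentially the same route as the paper: test (\ref{def sol GSP}$-jv$) with $y(r)=x(r)+\lambda b\,\alpha(r)$, use (\ref{assumpt phi 2}) to get (\ref{difference of var_ineq 1}), choose $\lambda=[(1+\gamma)(1+b)^{2}]^{-1}$ to absorb the $\gamma$-term, and take the supremum over $\|\alpha\|_{[s,t]}\le 1$ to recover the variation increment. The constant you obtain matches the paper's $C=C(L,\rho,\gamma,b)$, so there is nothing to add.
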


More generally we have

\begin{lemma}
\label{difference of var 2}Let $\varphi$ be a $\left( \rho,\gamma\right) $%
--semiconvex function and $\left( x,k\right) =\mathcal{SP}\left(
\partial^{-}\varphi;x_{0},m\right) $. Assume that $\varphi$ satisfies
assumption (\ref{assumpt phi 2}) and set $\mathrm{Dom}\left( \varphi\right) $
satisfies assumption (\ref{assumpt phi 3}). If $0\leq s\leq t$ and%
\begin{equation*}
\sup_{r\in\left[ s,t\right] }\left\vert x\left( r\right) -x\left( s\right)
\right\vert \leq\delta,
\end{equation*}
then there exists $\sigma>0$ such that%
\begin{equation}
\left\updownarrow k\right\updownarrow _{t}-\left\updownarrow
k\right\updownarrow _{s}\leq\frac{1}{\sigma}\left\vert k\left( t\right)
-k\left( s\right) \right\vert +\frac{3L+4\rho}{\sigma}\left( t-s\right) .
\label{difference of var_ineq 2}
\end{equation}
\end{lemma}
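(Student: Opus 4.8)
The plan is to test the subdifferential inequality~(\ref{def sol GSP}$-jv$) against a one-parameter family of continuous perturbations of $x$ coming from the shifted interior ball condition anchored at the single point $x(s)$, and then take a supremum over those perturbations to recover $\left\updownarrow k\right\updownarrow_t-\left\updownarrow k\right\updownarrow_s$. First I would record two easy facts: because $\varphi$ is lower semicontinuous and satisfies~(\ref{assumpt phi 2}), $\mathrm{Dom}(\varphi)$ is closed, so $(j)$ of Definition~\ref{sol GSP} gives $x(r)\in\mathrm{Dom}(\varphi)$ for all $r$, and $k=x_0+m-x$ is continuous, hence $r\mapsto\left\updownarrow k\right\updownarrow_r$ is continuous. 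Applying assumption~(\ref{assumpt phi 3}) (the $(\gamma,\delta,\sigma)$--SUIBC) at $y=x(s)\in\mathrm{Dom}(\varphi)$ yields $\lambda:=\lambda_{x(s)}\in(0,1]$ and $v:=v_{x(s)}$ with $|v|\le 1$, $\lambda-(|v|+\lambda)^2\gamma\ge\sigma$, and $\overline{B}(z+v,\lambda)\subset\mathrm{Dom}(\varphi)$ for every $z\in\mathrm{Dom}(\varphi)\cap\overline{B}(x(s),\delta)$; the standing hypothesis $\sup_{r\in[s,t]}|x(r)-x(s)|\le\delta$ then shows that each $x(r)$ with $r\in[s,t]$ is such a $z$, so $\overline{B}(x(r)+v,\lambda)\subset\mathrm{Dom}(\varphi)$ for all $r\in[s,t]$. (The $\sigma$ in the conclusion is precisely this one.)

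Next, fix $\alpha\in C(\mathbb{R}_+;\mathbb{R}^d)$ with $\|\alpha\|_{[s,t]}\le 1$ and set $y(r):=x(r)+v+\lambda\alpha(r)$; this is continuous and, by the previous step, satisfies $y(r)\in\overline{B}(x(r)+v,\lambda)\subset\mathrm{Dom}(\varphi)$ for $r\in[s,t]$, so the right-hand side of~(\ref{def sol GSP}$-jv$) is finite and the inequality is a genuine constraint. Using $y(r)-x(r)=v+\lambda\alpha(r)$ and $\int_s^t\langle v,dk(r)\rangle=\langle v,k(t)-k(s)\rangle$, it rearranges to
\[
\lambda\int_s^t\langle\alpha(r),dk(r)\rangle\le-\langle v,k(t)-k(s)\rangle+\int_s^t\big[\varphi(y(r))-\varphi(x(r))\big]dr+\int_s^t|y(r)-x(r)|^2\big(\rho\,dr+\gamma\,d\left\updownarrow k\right\updownarrow_r\big).
\]
Now I would estimate the right-hand side termwise: $-\langle v,k(t)-k(s)\rangle\le|k(t)-k(s)|$ since $|v|\le 1$; since $|y(r)-x(r)|=|v+\lambda\alpha(r)|\le|v|+\lambda\le 2$, assumption~(\ref{assumpt phi 2}) gives $\varphi(y(r))-\varphi(x(r))\le L+L|y(r)-x(r)|\le 3L$; and $|y(r)-x(r)|^2\le(|v|+\lambda)^2\le 4$ bounds the $\rho$-term by $4\rho(t-s)$ and the $\gamma$-term by $\gamma(|v|+\lambda)^2(\left\updownarrow k\right\updownarrow_t-\left\updownarrow k\right\updownarrow_s)$. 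The resulting bound does not involve $\alpha$.

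Finally, taking the supremum over all such continuous $\alpha$ and using that $\sup_{\alpha}\int_s^t\langle\alpha(r),dk(r)\rangle=\left\updownarrow k\right\updownarrow_t-\left\updownarrow k\right\updownarrow_s$ (the Riesz--Stieltjes duality recalled in Section~2, with no endpoint subtlety since $\left\updownarrow k\right\updownarrow$ is continuous), I obtain
\[
\big[\lambda-\gamma(|v|+\lambda)^2\big]\big(\left\updownarrow k\right\updownarrow_t-\left\updownarrow k\right\updownarrow_s\big)\le|k(t)-k(s)|+(3L+4\rho)(t-s),
\]
and since $\lambda-\gamma(|v|+\lambda)^2\ge\sigma>0$ by~(\ref{SUIBC cond})$(i)$, dividing by $\sigma$ gives~(\ref{difference of var_ineq 2}). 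The only genuinely delicate point is the admissibility of the test perturbations, i.e.\ keeping $y(r)$ inside $\mathrm{Dom}(\varphi)$ so that~(\ref{def sol GSP}$-jv$) is not vacuous; this is exactly what the shifted interior ball condition provides once the whole sub-arc $x([s,t])$ stays within distance $\delta$ of $x(s)$, which is the hypothesis of the lemma. Everything else is bookkeeping with the constants $|v|\le 1$, $\lambda\le 1$ and the inequality $\lambda-\gamma(|v|+\lambda)^2\ge\sigma$.
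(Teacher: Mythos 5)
Your proposal is correct and follows essentially the same route as the paper: test the subdifferential inequality (\ref{def sol GSP}$-jv$), combined with (\ref{assumpt phi 2}), against the perturbations $y(r)=x(r)+v_{x(s)}+\lambda_{x(s)}\alpha(r)$ supplied by the shifted uniform interior ball condition at $x(s)$, take the supremum over $\|\alpha\|_{[s,t]}\le 1$, and absorb the $\gamma$-term using $\lambda_{x(s)}-\gamma(|v_{x(s)}|+\lambda_{x(s)})^2\ge\sigma$. Your write-up merely spells out the admissibility of the test functions and the constant bookkeeping that the paper leaves implicit (via its preliminary inequality (\ref{difference of var_ineq 1})).
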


\begin{proof}
Let us fix arbitrarily $\alpha\in C\left( \mathbb{R}_{+};\mathbb{R}%
^{d}\right) $ such that $\left\Vert \alpha\right\Vert _{\left[ s,t\right]
}\leq1$. From assumptions (\ref{assumpt phi 2}--\ref{assumpt phi 3}), if%
\begin{equation*}
y\left( r\right) :=x\left( r\right) +v_{x\left( s\right) }+\lambda_{x\left(
s\right) }\alpha\left( r\right) ,\;\;r\in\left[ s,t\right] ,
\end{equation*}
then $y\left( r\right) \in E$.

Moreover%
\begin{equation*}
\left\vert y\left( r\right) -x\left( r\right) \right\vert \leq\left\vert
v_{x\left( s\right) }\right\vert +\lambda_{x\left( s\right) }\leq2
\end{equation*}
and%
\begin{equation*}
\left\vert \varphi\left( y\left( r\right) \right) -\varphi\left( x\left(
r\right) \right) \right\vert \leq3L.
\end{equation*}
From (\ref{difference of var_ineq 1}) we deduce that%
\begin{align*}
\lambda_{x\left( s\right) }\int_{s}^{t}\left\langle \alpha\left( r\right)
,dk\left( r\right) \right\rangle & \leq-\int_{s}^{t}\left\langle v_{x\left(
s\right) },dk\left( r\right) \right\rangle +\left( 3L+4\rho\right) \left(
t-s\right) \\
& +\gamma\int_{s}^{t}\left( \left\vert v_{x\left( s\right) }\right\vert
+\lambda_{x\left( s\right) }\right) ^{2}d\left\updownarrow
k\right\updownarrow _{r}~.
\end{align*}
Taking the supremum over all $\alpha$ such that $\left\Vert
\alpha\right\Vert _{\left[ s,t\right] }\leq1$ we see, using also (\ref{SUIBC
cond}), that%
\begin{equation*}
\sigma\left( \left\updownarrow k\right\updownarrow _{t}-\left\updownarrow
k\right\updownarrow _{s}\right) \leq\left\vert k\left( t\right) -k\left(
s\right) \right\vert +\left( 3L+4\rho\right) \left( t-s\right)
\end{equation*}
and the Lemma follows.\hfill$\smallskip$
\end{proof}

\begin{proof}[Proof of Theorem \protect\ref{main result 2}]
We denote by $C,C^{\prime},C^{\prime\prime}$ generic constants independent
of $x_{0}$, $\hat{x}_{0}$, $m$, $\hat{m}$ and $T$, but possibly depending on
constants $L,\delta ,\sigma,\rho,\gamma$ provided by the assumptions.$%
\smallskip$

\noindent\textrm{Step 1.}\textit{\ Some estimates of the modulus of
continuity of the function }$x$\textit{.}

Let $0\leq s\leq t\leq T.$

Since%
\begin{equation*}
\left\vert x\left( t\right) -x\left( s\right) -m\left( t\right) +m\left(
s\right) \right\vert =\left\vert k\left( t\right) -k\left( s\right)
\right\vert \leq\left\updownarrow k\right\updownarrow _{t}-\left\updownarrow
k\right\updownarrow _{s},
\end{equation*}
it follows that%
\begin{equation*}
\left\vert x\left( t\right) -x\left( s\right) \right\vert \leq\left\vert
m\left( t\right) -m\left( s\right) \right\vert +\left\updownarrow
k\right\updownarrow _{t}-\left\updownarrow k\right\updownarrow _{s}.
\end{equation*}
We clearly have%
\begin{align*}
\left\vert x\left( t\right) -x\left( s\right) -m\left( t\right) +m\left(
s\right) \right\vert ^{2} & =2\int_{s}^{t}\left\langle m\left( r\right)
-m\left( s\right) ,dk\left( r\right) \right\rangle \\
& +2\int_{s}^{t}\left\langle x\left( s\right) -x\left( r\right) ,dk\left(
r\right) \right\rangle .
\end{align*}
From (\ref{def sol GSP}$-jv$) written for $y\left( r\right) \equiv x\left(
s\right) $ and (\ref{assumpt phi 2}) we have%
\begin{align*}
\int_{s}^{t}\left\langle x\left( s\right) -x\left( r\right) ,dk\left(
r\right) \right\rangle & \leq L\left( t-s\right) +L\int_{s}^{t}\left\vert
x\left( s\right) -x\left( r\right) \right\vert dr \\
& +\int_{s}^{t}\left\vert x\left( s\right) -x\left( r\right) \right\vert
^{2}\left( \rho dr+\gamma d\left\updownarrow k\right\updownarrow _{r}\right)
.
\end{align*}
Thus, using also the inequality $\frac{1}{2}\left\vert \alpha\right\vert
^{2}\leq\left\vert \alpha-\beta\right\vert ^{2}+\left\vert \beta\right\vert
^{2}$, we get%
\begin{equation*}
\begin{array}{r}
\dfrac{1}{2}\left\vert x\left( t\right) -x\left( s\right) \right\vert
^{2}\leq\left\vert m\left( t\right) -m\left( s\right) \right\vert ^{2}+2%
\mathbf{m}_{m}\left( t-s\right) \left( \left\updownarrow k\right\updownarrow
_{t}-\left\updownarrow k\right\updownarrow _{s}\right) +C\left( t-s\right)
\smallskip \\
+C\displaystyle\int_{s}^{t}\left\vert x\left( r\right) -x\left( s\right)
\right\vert ^{2}\left( dr+d\left\updownarrow k\right\updownarrow _{r}\right)
,%
\end{array}%
\end{equation*}
and, by Gronwall's inequality,%
\begin{equation}
\begin{array}{r}
\left\vert x\left( t\right) -x\left( s\right) \right\vert ^{2}\leq\left[
\mathbf{m}_{m}^{2}\left( t-s\right) +\mathbf{m}_{m}\left( t-s\right) \left(
\left\updownarrow k\right\updownarrow _{t}-\left\updownarrow
k\right\updownarrow _{s}\right) +\left( t-s\right) \right] \smallskip \\
\cdot\exp\left[ C\left( 1+t-s+\left\updownarrow k\right\updownarrow
_{t}-\left\updownarrow k\right\updownarrow _{s}\right) \right] ,%
\end{array}
\label{main result 2_ineq 3}
\end{equation}
for all $\,0\leq s\leq t\leq T.\smallskip$

\noindent\textrm{Step 2.}\textit{\ Estimates of the differences }$\left\vert
x\left( t\right) -x\left( s\right) \right\vert $\textit{\ and }$%
\left\updownarrow k\right\updownarrow _{t}-\left\updownarrow
k\right\updownarrow _{s}$\textit{\ under the assumption }$\left\vert x\left(
t\right) -x\left( s\right) \right\vert \leq\delta$.

Let $0\leq s\leq r\leq t\leq T$ such that $\left\vert x\left( t\right)
-x\left( s\right) \right\vert \leq\delta$. From (\ref{difference of var_ineq
2}) we have%
\begin{equation*}
\begin{array}{l}
\left\updownarrow k\right\updownarrow _{t}-\left\updownarrow
k\right\updownarrow _{s}\leq C\left\vert k\left( t\right) -k\left( s\right)
\right\vert +C\left( t-s\right) \smallskip \\
=C\left\vert x\left( t\right) -x\left( s\right) -m\left( t\right) +m\left(
s\right) \right\vert +C\left( t-s\right) \smallskip \\
\leq C\left\vert x\left( t\right) -x\left( s\right) \right\vert +C\mathbf{m}%
_{m}\left( t-s\right) +C~\left( t-s\right) \leq C\delta +C\boldsymbol{\upmu}%
_{m}\left( t-s\right) .%
\end{array}%
\end{equation*}
Using the estimate (\ref{main result 2_ineq 3}), it clearly follows that%
\begin{equation*}
\left\vert x\left( t\right) -x\left( s\right) \right\vert ^{2}\leq%
\boldsymbol{\upmu}_{m}\left( t-s\right) \exp\left[ C\left( 1+T+\left\Vert
m\right\Vert _{T}\right) \right] ,\;\;\text{for all }0\leq s\leq t\leq T,
\end{equation*}
since $\left( t-s\right) +\mathbf{m}_{m}\left( t-s\right) =\boldsymbol{\upmu}%
_{m}\left( t-s\right) \leq T+2\left\Vert m\right\Vert _{T}~.$

Hence if $0\leq s\leq t\leq T$ and $\left\vert x\left( t\right) -x\left(
s\right) \right\vert \leq\delta$ then%
\begin{equation}
\left\vert x\left( t\right) -x\left( s\right) \right\vert +\left\updownarrow
k\right\updownarrow _{t}-\left\updownarrow k\right\updownarrow _{s}\leq\sqrt{%
\boldsymbol{\upmu}_{m}\left( t-s\right) }\cdot\exp\left[ C\left(
1+T+\left\Vert m\right\Vert _{T}\right) \right] .
\label{main result 2_ineq 4}
\end{equation}

\noindent\textrm{Step 3. }\textit{Adapted time partition and local estimates.%
}

Let the sequence given by (the definition is suggested by \cite{li-sn/84})%
\begin{align*}
t_{0}& =T_{0}=0, \\
T_{1}& =\inf \left\{ t\in \left[ t_{0},T\right] :\mathrm{d}_{\mathrm{Bd}%
\left( E\right) }\left( x\left( t\right) \right) \leq \delta /4\right\} , \\
t_{1}& =\inf \left\{ t\in \left[ T_{1},T\right] :\left\vert x\left( t\right)
-x\left( T_{1}\right) \right\vert >\delta /2\right\} , \\
T_{2}& =\inf \left\{ t\in \left[ t_{1},T\right] :\mathrm{d}_{\mathrm{Bd}%
\left( E\right) }\left( x\left( t\right) \right) \leq \delta /4\right\} , \\
& \cdots \cdots \cdots \\
t_{i}& =\inf \left\{ t\in \left[ T_{i},T\right] :\left\vert x\left( t\right)
-x\left( T_{i}\right) \right\vert >\delta /2\right\} \\
T_{i+1}& =\inf \left\{ t\in \left[ t_{i},T\right] :\mathrm{d}_{\mathrm{Bd}%
\left( E\right) }\left( x\left( t\right) \right) \leq \delta /4\right\} \\
& \cdots \cdots \cdots
\end{align*}%
Clearly%
\begin{equation*}
0=T_{0}=t_{0}\leq T_{1}<t_{1}\leq T_{2}<\cdots <t_{i}\leq
T_{i+1}<t_{i+1}\leq \cdots \leq T.
\end{equation*}%
Let $t_{i}\leq r\leq T_{i+1}$. Then $x\left( r\right) \in \mathrm{Int}\left(
E\right) $ and $\mathrm{d}_{\mathrm{Bd}\left( E\right) }\left( x\left(
r\right) \right) \geq \delta /4.$ By Lemma \ref{difference of var 1} we get%
\begin{equation*}
\left\vert k\left( t\right) -k\left( s\right) \right\vert \leq
\left\updownarrow k\right\updownarrow _{t}-\left\updownarrow
k\right\updownarrow _{s}\leq C\left( t-s\right) \;\text{for }t_{i}\leq s\leq
t\leq T_{i+1}~.
\end{equation*}%
Also for $t_{i}\leq s\leq t\leq T_{i+1}:$%
\begin{align*}
\left\vert x\left( t\right) -x\left( s\right) \right\vert & \leq \left\vert
k\left( t\right) -k\left( s\right) \right\vert +\left\vert m\left( t\right)
-m\left( s\right) \right\vert \leq C\left( t-s\right) +\left\vert m\left(
t\right) -m\left( s\right) \right\vert \\
& \leq C\boldsymbol{\upmu}_{m}\left( t-s\right)
\end{align*}%
and then%
\begin{equation*}
\left\vert x\left( t\right) -x\left( s\right) \right\vert +\left\updownarrow
k\right\updownarrow _{t}-\left\updownarrow k\right\updownarrow _{s}\leq C%
\boldsymbol{\upmu}_{m}\left( t-s\right) .
\end{equation*}%
On each of the intervals $\left[ T_{i},t_{i}\right] ,$ we have
\begin{equation*}
\left\vert x\left( t\right) -x\left( s\right) \right\vert \leq \delta ,\text{%
\quad for all }T_{i}\leq s\leq t\leq t_{i}.
\end{equation*}%
and consequently, for all $T_{i}\leq s\leq t\leq t_{i}$, inequality (\ref%
{main result 2_ineq 4}) holds.

If $T_{i}\leq s\leq t_{i}\leq t\leq T_{i+1}$ then%
\begin{equation*}
\begin{array}{l}
\left\vert x\left( t\right) -x\left( s\right) \right\vert +\left\updownarrow
k\right\updownarrow _{t}-\left\updownarrow k\right\updownarrow _{s}\smallskip
\\
\leq\left\vert x\left( t\right) -x\left( t_{i}\right) \right\vert
+\left\updownarrow k\right\updownarrow _{t}-\left\updownarrow
k\right\updownarrow _{t_{i}}+\left\vert x\left( t_{i}\right) -x\left(
s\right) \right\vert +\left\updownarrow k\right\updownarrow
_{t_{i}}-\left\updownarrow k\right\updownarrow _{s}\smallskip \\
\leq C\boldsymbol{\upmu}_{m}\left( t-t_{i}\right) +\sqrt{\boldsymbol{\upmu}%
_{m}\left( t_{i}-s\right) }\cdot\exp\left[ C\left( 1+T+\left\Vert
m\right\Vert _{T}\right) \right] \smallskip \\
\leq\sqrt{\boldsymbol{\upmu}_{m}\left( t-s\right) }\times\exp\left[
C^{\prime}\left( 1+T+\left\Vert m\right\Vert _{T}\right) \right] .%
\end{array}%
\end{equation*}
Consequently for all $i\in\mathbb{N}$ and $T_{i}\leq s\leq t\leq T_{i+1}$,
inequality (\ref{main result 2_ineq 4}) holds.$\smallskip$

\noindent\textrm{Step 4.}\textit{\ Getting inequalities }(\ref{main result
2_ineq 1}).

Since $\boldsymbol{\upmu}_{m}^{-1}:\left[ 0,\boldsymbol{\upmu}_{m}\left(
T\right) \right] \rightarrow\left[ 0,T\right] $ is well defined and is a
strictly increasing continuous function, from%
\begin{equation*}
\begin{array}{l}
\frac{\delta}{2}\leq\left\vert x\left( t_{i}\right) -x\left( T_{i}\right)
\right\vert \leq\sqrt{\boldsymbol{\upmu}_{m}\left( t_{i}-T_{i}\right) }%
\times\exp\left[ C\left( 1+T+\left\Vert m\right\Vert _{T}\right) \right]
\smallskip \\
\leq\sqrt{\boldsymbol{\upmu}_{m}\left( T_{i+1}-T_{i}\right) }\cdot \exp\left[
C\left( 1+T+\left\Vert m\right\Vert _{T}\right) \right] ,%
\end{array}%
\end{equation*}
we deduce that%
\begin{align*}
T_{i+1}-T_{i} & \geq\boldsymbol{\upmu}_{m}^{-1}\left( \frac{\delta^{2}}{4}%
\exp\left[ -2C\left( 1+T+\left\Vert m\right\Vert _{T}\right) \right] \right)
\\
& \geq\boldsymbol{\upmu}_{m}^{-1}\left( \delta^{2}\exp\left[ -2C^{\prime
}\left( 1+T+\left\Vert m\right\Vert _{T}\right) \right] \right) >0.
\end{align*}
Hence the bounded increasing sequence $\left( T_{i}\right) _{i\geq0}$ has a
finite numbers of terms, therefore there exists $j\in\mathbb{N}^{\ast}$ such
that $T=T_{j}$. Then%
\begin{equation*}
T=T_{j}=\sum_{i=1}^{j}\left( T_{i}-T_{i-1}\right) \geq j\Delta_{m}^{-1},
\end{equation*}
where $\Delta_{m}:=1/\boldsymbol{\upmu}_{m}^{-1}\left( \delta^{2}\exp\left[
-C^{\prime}\left( 1+T+\left\Vert m\right\Vert _{T}\right) \right] \right) $
(see definition (\ref{main result 2_def})).

Let $0\leq s\leq t\leq T.$ We have%
\begin{align*}
\left\updownarrow k\right\updownarrow _{t}-\left\updownarrow
k\right\updownarrow _{s} & =\sum_{i=1}^{j}\left( \left\updownarrow
k\right\updownarrow _{\left( t\wedge T_{i}\right) \vee s}-\left\updownarrow
k\right\updownarrow _{\left( t\wedge T_{i-1}\right) \vee s}\right) \\
& \leq\sum_{i=1}^{j}\sqrt{\boldsymbol{\upmu}_{m}\big(\left( t\wedge
T_{i}\right) \vee s-\left( t\wedge T_{i-1}\right) \vee s\big)}\cdot \exp%
\left[ C\left( 1+T+\left\Vert m\right\Vert _{T}\right) \right] \\
& \leq j\sqrt{\boldsymbol{\upmu}_{m}\left( t-s\right) }\cdot\exp\left[
C\left( 1+T+\left\Vert m\right\Vert _{T}\right) \right] \\
& \leq T\Delta_{m}\sqrt{\boldsymbol{\upmu}_{m}\left( t-s\right) }\cdot \exp%
\left[ C\left( 1+T+\left\Vert m\right\Vert _{T}\right) \right]
\end{align*}
and consequently%
\begin{equation*}
\left\updownarrow k\right\updownarrow _{T}\leq T\Delta_{m}\sqrt{\boldsymbol{%
\upmu}_{m}\left( T\right) }\cdot\exp\left[ C\left( 1+T+\left\Vert
m\right\Vert _{T}\right) \right] \leq\exp\left[ C^{\prime}\left(
1+T+\left\Vert m\right\Vert _{T}+\Delta_{m}\right) \right]
\end{equation*}
and
\begin{equation*}
\left\vert x\left( t\right) \right\vert =\left\vert x_{0}+m\left( t\right)
-k\left( t\right) \right\vert \leq\left\vert x_{0}\right\vert +\left\Vert
m\right\Vert _{t}+\left\updownarrow k\right\updownarrow _{t}\leq\left\vert
x_{0}\right\vert +\left\Vert m\right\Vert _{T}+\left\updownarrow
k\right\updownarrow _{T}.
\end{equation*}
Hence there exists a positive constant $C=C\left( L,\delta,\sigma,\rho
,\gamma\right) $ such that, under notations (\ref{main result 2_def}),%
\begin{equation*}
\left\updownarrow k\right\updownarrow _{T}\leq C_{T,m},\quad\text{and}%
\quad\left\Vert x\right\Vert _{T}\leq\left\vert x_{0}\right\vert +C_{T,m},
\end{equation*}
which is part of conclusion (\ref{main result 2_ineq 1}).

In order to end the proof of (\ref{main result 2_ineq 1}) it is sufficient
to remark that, for any $0\leq s\leq t\leq T,$%
\begin{align*}
\left\vert x\left( t\right) -x\left( s\right) \right\vert ^{2} & \leq\left[
\mathbf{m}_{m}^{2}\left( t-s\right) +\mathbf{m}_{m}\left( t-s\right)
C_{T,m}+\left( t-s\right) \right] \cdot\exp\left[ C\left( 1+C_{T,m}\right) %
\right] \\
& \leq C_{T,m}~\boldsymbol{\upmu}_{m}\left( t-s\right) .
\end{align*}

\noindent\textrm{Step 5.}\textit{\ Getting inequality }(\ref{main result
2_ineq 2}).

Since $\left\updownarrow k\right\updownarrow _{T}+\updownarrow \hat{k}%
\updownarrow _{T}\leq C_{T,m}+C_{T,\hat{m}}$~, from inequality (\ref{uniq
ineq 2}) we have%
\begin{equation*}
\begin{array}{r}
\left\Vert x-\hat{x}\right\Vert _{T}^{2}\leq 2\big[\left\vert x_{0}-\hat{x}%
_{0}\right\vert ^{2}+\left\Vert m-\hat{m}\right\Vert _{T}^{2}+2\left\Vert m-%
\hat{m}\right\Vert _{T}\Vert k-\hat{k}\Vert _{T}\big] \\
\cdot \exp \big[4\gamma (2t+\left\updownarrow k\right\updownarrow
_{T}+\updownarrow \hat{k}\updownarrow _{T})\big]\smallskip \\
\multicolumn{1}{l}{\quad \quad \quad \quad \;\leq A^{2}\left( C_{T,m},C_{T,%
\hat{m}}\right) ~\left[ \left\vert x_{0}-\hat{x}_{0}\right\vert
^{2}+\left\Vert m-\hat{m}\right\Vert _{T}\right] ,}%
\end{array}%
\end{equation*}%
(where $A$ is a continuous function) and the conclusion follows since $k-%
\hat{k}=x_{0}-\hat{x}_{0}+m-\hat{m}-\left( x-\hat{x}\right) $.\hfill $%
\medskip $
\end{proof}

\begin{proof}[Proof of the Theorem \protect\ref{main result 1}]
Uniqueness was proved in Theorem \ref{uniq}. To prove existence, let $%
m_{n}\in C^{1}\left( \mathbb{R}_{+};\mathbb{R}^{d}\right) $ with $%
m_{n}\left( 0\right) =0$ be such that $\left\Vert m_{n}-m\right\Vert _{T}$ $%
\rightarrow 0$ for all $T\geq 0$. Since $m_{n}\in C^{1}\left( \mathbb{R}_{+};%
\mathbb{R}^{d}\right) $, we deduce, using the results from the papers \cite%
{de-ma-to/85} or \cite{ro-sa/06}, that there exists a unique solution $%
\left( x_{n},k_{n}\right) $ of the $\mathcal{SP}\left( \partial ^{-}\varphi
;x_{0},m_{n}\right) $, and by Corollary \ref{main result 3} we see that
there exist $x,k\in C\left( \mathbb{R}_{+};\mathbb{R}^{d}\right) $ such that
for all $T\geq 0$%
\begin{equation*}
\begin{array}{l}
\left\Vert x_{n}-x\right\Vert _{T}+\left\Vert k_{n}-k\right\Vert
_{T}\rightarrow 0,\;\text{as\ }n\rightarrow \infty \text{, and}\smallskip \\
\left( x,k\right) =\mathcal{SP}\left( \partial ^{-}\varphi ;x_{0},m\right) ,%
\end{array}%
\end{equation*}%
which completes the proof.\hfill
\end{proof}

\section{Generalized Skorohod equations}

Consider the next (non--convex) variational inequality with singular input
(which will be called generalized Skorohod differential equation):%
\begin{equation}
\left\{
\begin{array}{l}
x\left( t\right) +k\left( t\right) =x_{0}+\displaystyle\int_{0}^{t}f\left(
s,x\left( s\right) \right) ds+m\left( t\right) ,\quad t\geq0,\smallskip \\
dk\left( t\right) \in\partial^{-}\varphi\left( x\left( t\right) \right)
\left( dt\right)%
\end{array}
\right.  \label{GSE}
\end{equation}
(for the notation $dk\left( t\right) \in\partial^{-}\varphi\left( x\left(
t\right) \right) \left( dt\right) $ we recall Remark \ref%
{notation_apartenance}).

We introduce the following supplementary assumptions:%
\begin{equation}
f\left( \cdot ,x\right) :\mathbb{R}_{+}\rightarrow \mathbb{R}^{d}\text{ is
measurable, }\forall x\in \mathbb{R}^{d},  \label{assumpt f 1}
\end{equation}%
and there exists $\mu \in L_{loc}^{1}\left( \mathbb{R}_{+}\right) $, such
that a.e.\ $t\geq 0:$%
\begin{equation}
\begin{array}{rl}
\left( i\right) & x\mapsto f\left( t,x\right) :\mathbb{R}^{d}\rightarrow
\mathbb{R}^{d}\,\text{is continuous,}\smallskip \\
\left( ii\right) & \left\langle x-y,f(t,x)-f(t,y)\right\rangle \leq \mu
\left( t\right) |x-y|^{2},\;\forall x,y\in \mathbb{R}^{d},\smallskip \\
\left( iii\right) & \displaystyle\int_{0}^{T}f^{\#}\left( s\right) ds<\infty
,\;\forall T\geq 0,%
\end{array}
\label{assumpt f 2}
\end{equation}%
where%
\begin{equation}
f^{\#}\left( t\right) :=\sup \big\{\left\vert f(t,x)\right\vert :x\in
\overline{\mathrm{Dom}\left( \varphi \right) }\big\}.  \label{def f sharp}
\end{equation}%
Clearly, assumption (\ref{assumpt f 2}$-iii$) is satisfied if, as example, $%
f\left( t,x\right) =f\left( t\right) $ or if $\mathrm{Dom}\left( \varphi
\right) $ is bounded.

\begin{proposition}[Generalized Skorohod Equation]
\label{main result 5}Let $\varphi :\mathbb{R}^{d}\rightarrow(-\infty,\infty]$
and $f:\mathbb{R}_{+}\times\mathbb{R}^{d}\rightarrow\mathbb{R}^{d}$ be such
that assumptions (\ref{assumpt input}), (\ref{assumpt phi}), (\ref{assumpt
phi 2}), (\ref{assumpt phi 3}) and (\ref{assumpt f 1}), (\ref{assumpt f 2})
are satisfied.

Then the generalized Skorohod equation (\ref{GSE}) has a unique solution.
\end{proposition}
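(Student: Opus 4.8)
The plan is to reduce the equation with drift $f$ to the driftless generalized Skorohod problem already solved in Theorem~\ref{main result 1}, by absorbing the integral $\int_0^t f(s,x(s))\,ds$ into the continuous input together with $m$. More precisely, I would set up a fixed point argument: given a continuous $z:\mathbb{R}_+\to\overline{\mathrm{Dom}(\varphi)}$, define $m_z(t):=\int_0^t f(s,z(s))\,ds+m(t)$; by assumption (\ref{assumpt f 1}) and (\ref{assumpt f 2}$-i,iii$) this is a well-defined continuous function with $m_z(0)=0$, so by Theorem~\ref{main result 1} there is a unique solution $(x,k)=\mathcal{SP}(\partial^-\varphi;x_0,m_z)$. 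Denote by $\Theta(z):=x$ the associated map. A solution of (\ref{GSE}) is precisely a fixed point of $\Theta$, so the goal is to show $\Theta$ has a unique fixed point in $C(\mathbb{R}_+;\overline{\mathrm{Dom}(\varphi)})$.

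For uniqueness of the solution (hence of the fixed point), I would run essentially the argument of Theorem~\ref{uniq}: if $(x,k)$ and $(\hat x,\hat k)$ are two solutions of (\ref{GSE}), write $u=x-\hat x$ and use the identity for $|x(t)-\hat x(t)-\int_0^t(f(s,x(s))-f(s,\hat x(s)))ds+(\text{same for }k)|^2$ expanded via integration by parts; the cross term involving $dk-d\hat k$ is controlled from below by Lemma~\ref{uniq lemma}, and the drift contributes a term bounded by $2\int_0^t\mu(s)|u(s)|^2\,ds$ thanks to the one-sided Lipschitz condition (\ref{assumpt f 2}$-ii$). Gronwall's inequality with the integrator $2\mu(s)ds+2\rho\,ds+\gamma\,d\!\updownarrow\!k\!\updownarrow_s+\gamma\,d\!\updownarrow\!\hat k\!\updownarrow_s$ then forces $u\equiv0$ on $[0,T]$ for every $T$, since the total variations $\updownarrow\!k\!\updownarrow_T$ and $\updownarrow\!\hat k\!\updownarrow_T$ are finite by Theorem~\ref{main result 2}$(a)$.

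For existence I would first establish an a priori bound: if $x$ is any fixed point on $[0,T]$, then $\|m_x\|_T\le \|m\|_T+\int_0^T f^{\#}(s)\,ds=:R_T$, which is independent of $x$ because the solution stays in $\overline{\mathrm{Dom}(\varphi)}$; feeding this into Theorem~\ref{main result 2}$(b)$ gives $\|x\|_T\le|x_0|+C_{T,R_T}$ uniformly. Next, the family $\{m_z:\|z\|_T\le|x_0|+C_{T,R_T}\}$ lies in a set with modulus of continuity dominated by $\mathbf{m}_m(\cdot)+\int_0^{\,\cdot}\!f^{\#}$, which is a compact subset $\mathcal{M}$ of $C([0,T];\mathbb{R}^d)$; by Remark~\ref{remark 1} the constants $C_{T,\mathcal{M}}$ and $\boldsymbol{\upmu}_{\mathcal{M}}$ are uniform, so by the continuity estimate (\ref{main result 2_ineq 2}) the map $\Theta$ is continuous and maps a closed bounded equicontinuous (hence, by Arzel\`a--Ascoli, relatively compact) convex subset of $C([0,T];\mathbb{R}^d)$ into itself. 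Schauder's fixed point theorem then yields a fixed point on $[0,T]$; since $T$ is arbitrary and uniqueness has already been shown, the local solutions patch together into a unique global solution of (\ref{GSE}).

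The main obstacle is verifying that $\Theta$ genuinely maps a suitable compact convex set into itself: one must check that the self-map can be restricted to functions taking values in $\overline{\mathrm{Dom}(\varphi)}$ (automatic, since every $\mathcal{SP}$ solution does) and, more delicately, that the equicontinuity of the images is uniform over the domain of $\Theta$ — this is where the compactness of $\mathcal{M}$ in $C([0,T];\mathbb{R}^d)$ and the uniform constants of Remark~\ref{remark 1} are essential, rather than the pointwise estimates of Theorem~\ref{main result 2} alone. The rest is routine once these uniformities are in place.
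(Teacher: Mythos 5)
Your uniqueness argument is essentially the paper's (Lemma \ref{uniq lemma} plus the one--sided Lipschitz condition and a Gronwall inequality with the measure $2\rho\,dr+\gamma\,d\!\updownarrow\! k\!\updownarrow_r+\gamma\,d\!\updownarrow\!\hat k\!\updownarrow_r+\mu^{+}(r)dr$), and that part is fine. The existence step, however, has a genuine gap at the Schauder application. The set on which $\Theta$ is actually a self--map consists of functions with values in $\overline{\mathrm{Dom}(\varphi)}$ --- you note yourself that outputs automatically satisfy this --- but that set is \emph{not convex}, precisely because $\mathrm{Dom}(\varphi)$ is non--convex, which is the whole setting of the paper. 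If you enlarge to a convex set (say the closed convex hull of your bounded equicontinuous set), then for $z$ leaving $\overline{\mathrm{Dom}(\varphi)}$ the bound $|f(s,z(s))|\le f^{\#}(s)$ is no longer available: assumption (\ref{assumpt f 2}$-iii$) controls $f$ only on $\overline{\mathrm{Dom}(\varphi)}$, so $m_z$ need not even be well defined, and the self--map property and the uniform equicontinuity (hence the compact family $\mathcal{M}$ and the constants of Remark \ref{remark 1}) are lost. Nor can you restore convexity by composing with a projection/retraction onto the domain: under the paper's hypotheses there is in general no continuous retraction of $\mathbb{R}^{d}$ onto $\overline{\mathrm{Dom}(\varphi)}$ (an annulus satisfies the UEBC and the drop condition but is not a retract of $\mathbb{R}^{d}$), and the metric projection is only defined and Lipschitz on a small neighborhood. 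So, as written, the key fixed--point step fails.

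The idea can be repaired by running Schauder on the \emph{inputs} rather than on the trajectories: consider $\Psi(\nu):=\int_{0}^{\cdot}f(s,x_{\nu}(s))\,ds+m$, where $(x_{\nu},k_{\nu})=\mathcal{SP}(\partial^{-}\varphi;x_{0},\nu)$, acting on the set of $\nu$ with $\nu(0)=0$, $\Vert\nu\Vert_{T}\le\Vert m\Vert_{T}+\int_{0}^{T}f^{\#}$ and modulus of continuity dominated by $\mathbf{m}_{m}(\cdot)+\int f^{\#}$; this set is convex and compact, $\Psi$ maps it into itself because $x_{\nu}$ stays in $\overline{\mathrm{Dom}(\varphi)}$, and $\Psi$ is continuous by (\ref{main result 2_ineq 2}) (with the uniform constants of Remark \ref{remark 1}) combined with dominated convergence for the Nemytskii map. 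The paper avoids fixed--point theorems altogether: it solves the delayed equation with drift $f(s,x_{n}(s-1/n))$ by iteration over intervals of length $1/n$ using Theorem \ref{main result 1}, derives uniform bounds and equicontinuity from Theorem \ref{main result 2} and Remark \ref{remark 1}, extracts a uniformly convergent subsequence by Arzel\`a--Ascoli, and identifies the limit via the stability result of Corollary \ref{main result 3}. Either of these routes closes the gap; your argument as stated does not.
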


\begin{proof}
Let $\left( x,k\right) $ and $(\hat{x},\hat{k})$ be two solutions. Then
\begin{equation*}
\begin{array}{l}
|x\left( t\right) -\hat{x}\left( t\right) |^{2}+2\displaystyle%
\int_{0}^{t}\langle x\left( r\right) -\hat{x}\left( r\right) ,dk\left(
r\right) -d\hat{k}\left( r\right) \rangle\smallskip \\
=2\displaystyle\int_{0}^{t}\langle x\left( r\right) -\hat{x}\left( r\right)
,f\left( r,x\left( r\right) \right) -f\left( r,\hat{x}\left( r\right)
\right) \rangle dr\leq2\int_{0}^{t}\mu^{+}\left( r\right) |x\left( r\right) -%
\hat{x}\left( r\right) |^{2}dr,%
\end{array}%
\end{equation*}
and using Lemma \ref{uniq lemma} it follows that%
\begin{equation*}
\left\vert x\left( t\right) -\hat{x}\left( t\right) \right\vert
^{2}\leq2\int _{0}^{t}\left\vert x\left( r\right) -\hat{x}\left( r\right)
\right\vert ^{2}dA_{r},
\end{equation*}
where%
\begin{equation*}
A_{t}=2\rho t+\gamma\left\updownarrow k\right\updownarrow _{t}+\gamma
\updownarrow\hspace{-0.09cm}\hat{k}\hspace{-0.05cm}\updownarrow_{t}+\int
_{0}^{t}\mu^{+}\left( r\right) dr.
\end{equation*}
Applying a Gronwall's type inequality, we see that $x=\hat{x}$.$\smallskip$

Concerning the existence, we shall obtain the solution $\left( x,k\right) $
as the limit in $C\left( \left[ 0,T\right] ;\mathbb{R}^{d}\right) \times
C\left( \left[ 0,T\right] ;\mathbb{R}^{d}\right) $ of the sequence $\left(
x_{n},k_{n}\right) _{n\in\mathbb{N}^{\ast}}$ defined by an approximate
Skorohod equation%
\begin{equation}
\left\{
\begin{array}{l}
x_{n}\left( t\right) =x_{0}~,\;\text{for }t<0, \\
x_{n}\left( t\right) +k_{n}\left( t\right) =x_{0}+\displaystyle\int_{0}^{t}f%
\big(s,x_{n}(s-1/n)\big)ds+m\left( t\right) ,\quad\text{for }%
t\geq0,\smallskip \\
dk_{n}\left( t\right) \in\partial^{-}\varphi\left( x_{n}\left( t\right)
\right) \left( dt\right) .%
\end{array}
\right.  \label{GSE approx}
\end{equation}
For any $i\in\mathbb{N}$, for $t\in\left[ \frac{i}{n},\frac{i+1}{n}\right] $%
, we can write%
\begin{equation*}
x_{n}\left( t\right) +\left[ k_{n}\left( t\right) -k_{n}\left( i/n\right) %
\right] =x_{n}\left( i/n\right) +\int_{i/n}^{t}f\big(s,x_{n}(s-1/n)\big)%
ds+m\left( t\right) -m\left( i/n\right) ,
\end{equation*}
therefore by iteration over the intervals $\left[ \frac{i}{n},\frac{i+1}{n}%
\right] $ there exists (via Theorem \ref{main result 1}) a unique pair $%
\left( x_{n},k_{n}\right) =\mathcal{SP}\left( \partial^{-}\varphi
;x_{0},m_{n}\right) $, with%
\begin{equation*}
m_{n}\left( t\right) =\int_{0}^{t}f\big(s,x_{n}(s-1/n)\big)ds+m\left(
t\right) .
\end{equation*}
Let $T>0$. If $\mathcal{M}$ denotes the set%
\begin{equation*}
\mathcal{M}:=\left\{ m_{n}:n\in\mathbb{N}^{\ast}\right\} ,
\end{equation*}
then $\mathcal{M}$ is a relatively compact subset of $C\left( \left[ 0,T%
\right] ;\mathbb{R}^{d}\right) $ since it is a bounded and equicontinuous
subset of $C\left( \left[ 0,T\right] ;\mathbb{R}^{d}\right) $.

Indeed%
\begin{equation*}
\left\Vert m_{n}\right\Vert _{T}\leq \int_{0}^{T}f^{\#}\left( s\right)
ds+\left\Vert m\right\Vert _{T}
\end{equation*}%
and, for $s<t,$%
\begin{equation*}
\left\vert m_{n}\left( t\right) -m_{n}\left( s\right) \right\vert \leq
\int_{s}^{t}f^{\#}\left( r\right) dr+\left\vert m\left( t\right) -m\left(
s\right) \right\vert .
\end{equation*}%
Then by Theorem \ref{main result 2} and Remark \ref{remark 1},%
\begin{equation}
\left\Vert x_{n}\right\Vert _{T}+\left\updownarrow k_{n}\right\updownarrow
_{T}\leq \left\vert x_{0}\right\vert +C_{T,\mathcal{M}}  \label{bound}
\end{equation}%
and for all $0\leq s\leq t:$%
\begin{equation*}
\left\vert x_{n}\left( t\right) -x_{n}\left( s\right) \right\vert
+\left\updownarrow k_{n}\right\updownarrow _{t}-\left\updownarrow
k_{n}\right\updownarrow _{s}\leq C_{T,\mathcal{M}}\sqrt{\boldsymbol{\upmu}_{%
\mathcal{M}}\left( t-s\right) },
\end{equation*}%
where $\boldsymbol{\upmu}_{\mathcal{M}}\left( \varepsilon \right)
:=\varepsilon +\sup\limits_{m\in \mathcal{M}}\mathbf{m}_{m}.$

Hence, by Arzel\`{a}--Ascoli's theorem, the set $\left\{ x_{n}:n\in \mathbb{N%
}^{\ast}\right\} $ is a relatively compact subset of $C\left( \left[ 0,T%
\right] ;\mathbb{R}^{d}\right) $.

Let $x\in C\left( \left[ 0,T\right] ;\mathbb{R}^{d}\right) $ be such that,
along a sequence still denoted by $\left\{ x_{n}:n\in \mathbb{N}^{\ast
}\right\} $,%
\begin{equation*}
\left\Vert x_{n}-x\right\Vert _{T}\rightarrow 0,\;\text{as }n\rightarrow
\infty .
\end{equation*}%
Then, uniformly with respect to $t\in \left[ 0,T\right] ,$%
\begin{equation*}
m_{n}\left( t\right) \rightarrow \int_{0}^{t}f\left( s,x(s)\right)
ds+m\left( t\right) ,\;\text{as }n\rightarrow \infty ,
\end{equation*}%
and%
\begin{equation*}
k_{n}\left( t\right) \rightarrow k\left( t\right) =x_{0}+\int_{0}^{t}f\left(
s,x(s)\right) ds+m\left( t\right) -x\left( t\right) ,\;\text{as }%
n\rightarrow \infty .
\end{equation*}%
Using Corollary \ref{main result 3} we infer that%
\begin{equation*}
\left( x,k\right) =\mathcal{SP}\left( \partial ^{-}\varphi
;x_{0},\int_{0}^{\cdot }f\left( s,x(s)\right) ds+m\right)
\end{equation*}%
i.e. $\left( x,k\right) $ is a solution of problem (\ref{GSE}). The
uniqueness of the solution of implies that the whole sequence $\left(
x_{n},k_{n}\right) $ is convergent to that solution $\left( x,k\right) $.
The proof is completed now.\hfill $\smallskip $
\end{proof}

\begin{remark}
As it can be seen in the above proof, assumption (\ref{assumpt f 2}$-iii$)
is essential in order to obtain inequality (\ref{bound}), i.e. the
boundedness of the sequence $(x_{n},k_{n})_{n}$ defined by (\ref{GSE approx}%
).
\end{remark}

\begin{remark}
\label{remark 5}If we replace assumptions (\ref{assumpt phi 2}), (\ref%
{assumpt phi 3}) and (\ref{assumpt f 2}$-iii$) by the hypotheses%
\begin{equation*}
\mathrm{Int}\left( \mathrm{Dom}\left( \varphi \right) \right) \neq \emptyset
\end{equation*}%
and%
\begin{equation*}
\int_{0}^{T}f_{R}^{\#}\left( s\right) ds<\infty ,\quad \forall R,T>0,
\end{equation*}%
where%
\begin{equation*}
f_{R}^{\#}\left( t\right) :=\sup \big\{\left\vert f(t,x)\right\vert
:\left\vert x\right\vert \leq R\big\},
\end{equation*}%
then, following the calculus from the convex case (see, e.g., Remark 4.15
and Proposition 4.16 from \cite{pa-ra/14}), we deduce%
\begin{equation}
\begin{array}{l}
\displaystyle\frac{1}{2}\left\vert x\left( t\right) -u_{0}\right\vert ^{2}+%
\frac{r_{0}}{2}\left\updownarrow k\right\updownarrow _{t}+\frac{r_{0}}{2}%
\int_{0}^{t}\left\vert f\left( r,x\left( r\right) \right) \right\vert
dr\medskip  \\
\displaystyle\leq \frac{1}{4}\left\Vert x-u_{0}\right\Vert
_{t}^{2}+C_{0}+C_{0}\left\Vert m\right\Vert _{T}\medskip  \\
\displaystyle+2\int_{0}^{t}\mu ^{+}\left( r\right) \left\Vert
x-u_{0}\right\Vert _{r}^{2}dr+2\int_{0}^{t}\mu ^{+}\left( r\right)
\left\Vert x-u_{0}\right\Vert _{r}^{2}(\rho dr+\gamma d\left\updownarrow
k\right\updownarrow _{r}),%
\end{array}
\label{bound 2}
\end{equation}%
where $u_{0}\in \mathbb{R}^{d}$ and $r_{0}\in \lbrack 0,1)$ are such that $%
\overline{B}\left( u_{0},r_{0}\right) \subset \mathrm{Int}\left( \mathrm{Dom}%
\left( \varphi \right) \right) .$

In the convex case, namely $\gamma =0$, this inequality yields the
boundedness (\ref{main result 2_ineq 1}--$a,b$), but in the non--convex case
($\gamma \neq 0$) we cannot obtain (\ref{main result 2_ineq 1}--$a,b$).

Of course, if there exists $R>0$ such that $\mathrm{Dom}\left( \varphi
\right) \subset B\left( 0,R\right) $, then $\left\Vert x\right\Vert _{T}\leq
R$ and from (\ref{bound 2}) we obtain $\left\updownarrow k\right\updownarrow
_{T}\leq C$, if $\gamma $ is such that%
\begin{equation*}
0\leq \gamma <\frac{r_{0}}{2\left( r_{0}+R+\left\vert u_{0}\right\vert
\right) }\,.
\end{equation*}
\end{remark}

If in the above Proposition we take $\varphi =I_{E}$ we get, via Theorem \ref%
{main result 4},

\begin{corollary}[Skorohod equation]
Let $x_{0}\in E$ and $m:\mathbb{R}_{+}\rightarrow \mathbb{R}^{d}$ be a
continuous function such that $m\left( 0\right) =0$. If $f$ satisfies
assumption (\ref{assumpt f 1})--(\ref{assumpt f 2}) and $E$ satisfies the $%
r_{0}-UEBC$ (for some $r_{0}>0$) and the shifted uniform interior ball
condition, then there exists a unique pair $\left( x,k\right) $ such that:%
\begin{equation}
\begin{array}{rl}
\left( j\right) & x,k\in C\left( \mathbb{R}_{+};E\right) ,\text{\quad }%
k\left( 0\right) =0,\smallskip \\
\left( jj\right) & k\in BV_{loc}\left( \mathbb{R}_{+};\mathbb{R}^{d}\right)
,\smallskip \\
\left( jjj\right) & x\left( t\right) +k\left( t\right)
=x_{0}+\int_{0}^{t}f\left( s,x(s)\right) ds+m\left( t\right) ,\smallskip \\
\left( jv\right) & \left\updownarrow k\right\updownarrow _{t}=\int_{0}^{t}%
\mathbf{1}_{x\left( s\right) \in \mathrm{Bd}\left( E\right)
}d\left\updownarrow k\right\updownarrow _{s}~,\smallskip \\
\left( v\right) & \multicolumn{1}{r}{k\left( t\right)
=\int_{0}^{t}n_{x\left( s\right) }d\left\updownarrow k\right\updownarrow
_{s},\;\text{where }n_{x\left( s\right) }\in N_{E}\left( x\left( s\right)
\right) \quad \text{and}\smallskip} \\
& \multicolumn{1}{r}{\left\vert n_{x\left( s\right) }\right\vert
=1,\;d\left\updownarrow k\right\updownarrow _{s}\text{--a.e.}}%
\end{array}
\label{def sol GSE_indicator case}
\end{equation}
\end{corollary}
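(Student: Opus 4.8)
The plan is to derive the corollary by specializing Proposition \ref{main result 5} to the indicator function $\varphi=I_{E}$ and then rewriting the resulting subdifferential inequality in the geometric form (jv)--(v) by means of Theorem \ref{main result 4}.

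First I would verify the hypotheses of Proposition \ref{main result 5} for $\varphi=I_{E}$. As already observed in the text preceding Definition \ref{sol SP}, when $E$ satisfies the $r_{0}-UEBC$ the indicator $I_{E}$ is a proper, lower semicontinuous, $\big(0,\tfrac{1}{2r_{0}}\big)$--semiconvex function (by Lemmas \ref{UEBC - semiconvex} and \ref{normal cone}, Definition \ref{semiconvex function} and (\ref{particular Frechet subdifferential})), with $\mathrm{Dom}\left( I_{E}\right) =E$; hence assumptions (\ref{assumpt input}), (\ref{assumpt phi}) and (\ref{assumpt phi 2}) hold (the latter trivially, with $L=0$, since $I_{E}\equiv 0$ on $E$). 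Assumption (\ref{assumpt phi 3}) is exactly the shifted uniform interior ball condition imposed on $E$ in the statement, and (\ref{assumpt f 1})--(\ref{assumpt f 2}) are part of the hypotheses. Proposition \ref{main result 5} therefore furnishes a unique pair $\left( x,k\right) $ solving (\ref{GSE}) with $\varphi=I_{E}$ in the sense of Definition \ref{sol GSP}: in particular $x,k\in C\left( \mathbb{R}_{+};E\right) $, $k\left( 0\right) =0$, $k\in BV_{loc}\left( \mathbb{R}_{+};\mathbb{R}^{d}\right) $, the identity in (jjj) holds, and $\left( x,k\right) $ satisfies (\ref{def sol GSP}$-jv$), which for $\varphi=I_{E}$ reduces to (\ref{def sol SP}$-jv$).

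Next I would absorb the drift into the driving path. Put $\tilde{m}\left( t\right) :=\int_{0}^{t}f\left( s,x\left( s\right) \right) ds+m\left( t\right) $; assumption (\ref{assumpt f 2}$-iii$) yields $\int_{0}^{t}\left\vert f\left( s,x\left( s\right) \right) \right\vert ds\leq \int_{0}^{t}f^{\#}\left( s\right) ds<\infty $ for every $t$, hence $\tilde{m}\in C\left( \mathbb{R}_{+};\mathbb{R}^{d}\right) $, $\tilde{m}\left( 0\right) =0$, and (jjj) reads $x\left( t\right) +k\left( t\right) =x_{0}+\tilde{m}\left( t\right) $. Thus $\left( x,k\right) =\mathcal{SP}\left( E;x_{0},\tilde{m}\right) $ in the sense of Definition \ref{sol SP}, and Theorem \ref{main result 4}, applied with driving path $\tilde{m}$, tells us that condition (\ref{def sol SP}$-jv$) is equivalent to (\ref{def sol SP - jv}). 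Since (\ref{def sol SP - jv}) is precisely the conjunction of conditions (jv) and (v) of the statement, the pair $\left( x,k\right) $ produced above satisfies all of (j)--(v).

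Finally, for uniqueness, I would show that any pair $\left( x,k\right) $ satisfying (j)--(v) solves (\ref{GSE}) with $\varphi=I_{E}$ in the sense of Definition \ref{sol GSP}: with $\tilde{m}$ as above, (jjj) gives $x+k=x_{0}+\tilde{m}$, while (jv)--(v) are condition (\ref{def sol SP - jv}); by the implication (\ref{def sol SP - jv}) $\Rightarrow$ (\ref{def sol SP}$-jv$) established in the proof of Theorem \ref{main result 4}, the pair satisfies (\ref{def sol SP}$-jv$), i.e.\ (\ref{def sol GSP}$-jv$) for $\varphi=I_{E}$; the uniqueness part of Proposition \ref{main result 5} then forces $\left( x,k\right) $ to coincide with the solution built in the previous step. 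I do not anticipate a real obstacle here; the only point requiring a bit of care is the passage from the equation with drift (\ref{GSE}) to a drift-free Skorohod problem with the continuous path $\tilde{m}$, which is what lets Theorem \ref{main result 4}---stated for $\mathcal{SP}\left( E;x_{0},m\right) $---be invoked unchanged.
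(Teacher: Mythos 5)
Your proposal is correct and follows essentially the same route as the paper, whose proof is precisely the one-line observation that one takes $\varphi=I_{E}$ in Proposition \ref{main result 5} and invokes the equivalence of Theorem \ref{main result 4}; you merely spell out the verification of the hypotheses for $I_{E}$, the absorption of the drift into the continuous path $\tilde{m}$, and the use of the implication (\ref{def sol SP - jv}) $\Rightarrow$ (\ref{def sol SP}$-jv$) for uniqueness, all of which is what the paper intends.
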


\section{Non--Convex stochastic variational inequalities}

In the last section of the paper we will study the following multivalued
stochastic differential equation (also called \textit{stochastic variational
inequality}) considered on a non--convex domain:%
\begin{equation}
\left\{
\begin{array}{l}
X_{t}+K_{t}=\xi+\displaystyle\int_{0}^{t}F\left( s,X_{s}\right)
ds+\int_{0}^{t}G\left( s,X_{s}\right) dB_{s},\quad t\geq0,\smallskip \\
dK_{t}\left( \omega\right) \in\partial^{-}\varphi\left( X_{t}\left(
\omega\right) \right) \left( dt\right) ,%
\end{array}
\right.  \label{GSSE}
\end{equation}
where $\varphi$ is $\left( \rho,\gamma\right) $--semiconvex function and $%
\left\{ B_{t}:t\geq0\right\} $ is an $\mathbb{R}^{k}$--valued Brownian
motion with respect to a stochastic basis (which is supposed to be complete
and right--continuous) $\left( \Omega,\mathcal{F},\mathbb{P},\{\mathcal{F}%
_{t}\}_{t\geq0}\right) $.

First we derive directly from Theorem \ref{main result 1} the existence
result in the additive noise case.

\begin{corollary}
\label{main result 1_conseq}Let $\left( \Omega,\mathcal{F},\mathbb{P},%
\mathcal{F}_{t},B_{t}\right) _{t\geq0}$ be given. If
\begin{equation*}
\xi\in L^{0}\big(\Omega,\mathcal{F}_{0},\mathbb{P};\overline{\mathrm{Dom}%
\left( \varphi\right) }\big)
\end{equation*}
and $M$ is a progressively measurable and continuous stochastic processes
(p.m.c.s.p. for short) with $M_{0}=0$, then there exists a unique pair $%
\left( X,K\right) $ of p.m.c.s.p., solution of the problem%
\begin{equation*}
\left\{
\begin{array}{l}
X_{t}\left( \omega\right) +K_{t}\left( \omega\right) =\xi\left(
\omega\right) +M_{t}\left( \omega\right) ,\;t\geq0,\;\omega\in
\Omega,\smallskip \\
dK_{t}\left( \omega\right) \in\partial^{-}\varphi\left( X_{t}\left(
\omega\right) \right) \left( dt\right)%
\end{array}
\right.
\end{equation*}
$\smallskip$(in this case we shall write $\left( X_{\cdot}\left(
\omega\right) ,K_{\cdot}\left( \omega\right) \right) =\mathcal{SP}\left(
\partial^{-}\varphi;\xi\left( \omega\right) ,M_{\cdot}\left( \omega\right)
\right) ,\quad\mathbb{P}-a.s.$).
\end{corollary}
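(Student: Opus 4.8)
The plan is to solve the problem $\omega$ by $\omega$ using Theorem~\ref{main result 1}, and then to promote the resulting family of deterministic solutions to a pair of progressively measurable processes by exploiting the continuity and causality of the Skorohod solution map.

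\textbf{Pathwise solution.} Fix $\omega\in\Omega$. Since $M_{\cdot}(\omega)\in C(\mathbb{R}_{+};\mathbb{R}^{d})$ with $M_{0}(\omega)=0$ and $\xi(\omega)\in\overline{\mathrm{Dom}(\varphi)}$, the data $(x_{0},m)=(\xi(\omega),M_{\cdot}(\omega))$ satisfy (\ref{assumpt input}), and together with the standing assumptions (\ref{assumpt phi}), (\ref{assumpt phi 2}), (\ref{assumpt phi 3}) on $\varphi$, Theorem~\ref{main result 1} provides a unique pair $\bigl(X_{\cdot}(\omega),K_{\cdot}(\omega)\bigr)=\mathcal{SP}\bigl(\partial^{-}\varphi;\xi(\omega),M_{\cdot}(\omega)\bigr)$. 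This already yields pathwise existence; moreover, any two solutions consisting of measurable processes must agree $\mathbb{P}$--a.s., since applying the uniqueness assertion of Theorem~\ref{uniq} for each $\omega$ forces their paths to coincide.

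\textbf{Measurability.} It remains to check that $X$ and $K$ defined in this way are p.m.c.s.p. Fix $T>0$. By estimate (\ref{main result 2_ineq 2}) in Theorem~\ref{main result 2} (equivalently, by Corollary~\ref{main result 3}), the solution map
\[
\Phi_{T}:\overline{\mathrm{Dom}(\varphi)}\times C([0,T];\mathbb{R}^{d})\longrightarrow C([0,T];\mathbb{R}^{d})\times C([0,T];\mathbb{R}^{d}),\qquad (x_{0},m)\longmapsto (x,k),
\]
is continuous, hence Borel measurable. Because $M$ is adapted with continuous paths, $\omega\mapsto M_{\cdot}(\omega)\big|_{[0,T]}$ is $\mathcal{F}/\mathcal{B}\bigl(C([0,T];\mathbb{R}^{d})\bigr)$--measurable, and $\xi$ is $\mathcal{F}_{0}$--measurable; composing, $\omega\mapsto\bigl(X_{\cdot}(\omega)\big|_{[0,T]},K_{\cdot}(\omega)\big|_{[0,T]}\bigr)=\Phi_{T}\bigl(\xi(\omega),M_{\cdot}(\omega)\big|_{[0,T]}\bigr)$ is measurable as a $C([0,T];\mathbb{R}^{d})^{2}$--valued random variable. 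Letting $T$ range over $\mathbb{R}_{+}$ (the solutions being consistent on overlapping intervals by uniqueness), $X$ and $K$ are measurable processes with continuous sample paths.

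\textbf{Adaptedness and conclusion.} For progressive measurability it suffices to show $X_{t},K_{t}$ are $\mathcal{F}_{t}$--measurable and invoke the fact that a continuous adapted process is progressively measurable. Here one uses the causality of the generalized Skorohod problem: if $m,\tilde{m}$ coincide on $[0,t]$ and $x_{0}=\tilde{x}_{0}$, then the corresponding solutions coincide on $[0,t]$, by uniqueness in Theorem~\ref{main result 1} applied on the interval $[0,t]$. Hence $X_{t}$ and $K_{t}$ are measurable with respect to $\sigma(\xi)\vee\sigma(M_{s}:0\le s\le t)\subseteq\mathcal{F}_{t}$, which gives adaptedness and therefore progressive measurability. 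I expect the only genuinely delicate point to be this last step: verifying the causality of the solution map and the measurability of $\omega\mapsto M_{\cdot}(\omega)\big|_{[0,T]}$ as a path-space--valued random variable; the existence and uniqueness content is an immediate transcription of Theorem~\ref{main result 1} and Theorem~\ref{uniq}.
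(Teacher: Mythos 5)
Your proposal is correct and follows essentially the same route as the paper: solve the problem pathwise via Theorem~\ref{main result 1}, get $\mathbb{P}$--a.s.\ uniqueness from the deterministic uniqueness, and obtain progressive measurability from the continuity of the solution map $(\xi,M)\mapsto(X,K)$ on $\overline{\mathrm{Dom}(\varphi)}\times C([0,t];\mathbb{R}^{d})$ (Theorem~\ref{main result 2}/Corollary~\ref{main result 3}) combined with the progressive measurability of $M$. Your explicit treatment of causality and of the path-space measurability merely spells out what the paper's shorter argument leaves implicit.
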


\begin{proof}
Let $\omega $ be arbitrary but fixed. By Theorem \ref{main result 1}, the
Skorohod problem
\begin{equation*}
\left( X_{\cdot }\left( \omega \right) ,K_{\cdot }\left( \omega \right)
\right) =\mathcal{SP}\left( \partial ^{-}\varphi ;\xi \left( \omega \right)
,M_{\cdot }\left( \omega \right) \right)
\end{equation*}%
has a unique solution
\begin{equation*}
\left( X_{\cdot }\left( \omega \right) ,K_{\cdot }\left( \omega \right)
\right) \in C(\mathbb{R}_{+};\mathbb{R}^{d})\times C(\mathbb{R}_{+};\mathbb{R%
}^{d}).
\end{equation*}%
Since $\left( \omega ,t\right) \mapsto M_{t}\left( \omega \right) $ is
progressively measurable and the mapping%
\begin{equation*}
\left( \xi ,M\right) \mapsto X:\overline{\mathrm{Dom}\left( \varphi \right) }%
\times C(\left[ 0,t\right] ;\mathbb{R}^{d})\rightarrow C(\left[ 0,t\right] ;%
\mathbb{R}^{d})
\end{equation*}%
is continuous for each $0\leq t\leq T$, the stochastic process $X$ is
progressively measurable. Hence the conclusion follows.\hfill $\smallskip $
\end{proof}

The next assumptions will be needed throughout this section:

\begin{itemize}
\item[\textrm{(A}$_{1}$\textrm{)}] \textit{(Carath\'{e}odory conditions) The
functions }$F\left( \cdot,\cdot,\cdot\right) :\Omega\times\mathbb{R}%
_{+}\times\mathbb{R}^{d}\rightarrow\mathbb{R}^{d}$\textit{\ and }$G\left(
\cdot,\cdot,\cdot\right) :\Omega\times\mathbb{R}_{+}\times\mathbb{R}%
^{d}\rightarrow\mathbb{R}^{d\times k}$\textit{\ are }$\left( \mathcal{P},%
\mathbb{R}^{d}\right) $\textit{--Carath\'{e}odory functions, i.e.}%
\begin{equation}
\begin{array}{l}
F\left( \cdot,\cdot,x\right) \;\text{\textit{and}\ }G\left( \cdot
,\cdot,x\right) \mathit{\;}\text{\textit{are p.m.s.p.,}\ }\forall ~x\in%
\mathbb{R}^{d},\smallskip \\
F\left( \omega,t,\cdot\right) \;\text{\textit{and}\ }G\left( \omega
,t,\cdot\right) \;\text{\textit{are continuous function} }d\mathbb{P}\otimes
dt\text{\textit{--a.e.}}%
\end{array}
\label{assumpt Carath}
\end{equation}

\item[\textrm{(A}$_{2}$\textrm{)}] \textit{(Boundedness conditions)} \textit{%
For all }$T\geq0:$%
\begin{equation}
\displaystyle\int_{0}^{T}F^{\#}\left( s\right) ds<\infty\quad\text{and}\quad%
\displaystyle\int_{0}^{T}|G^{\#}\left( s\right) |^{2}ds<\infty ,\;\mathbb{P}%
\text{\textit{--a.s.,}}  \label{assumpt bound}
\end{equation}
\textit{where}%
\begin{align*}
F^{\#}\left( t\right) & :=\sup\big\{\left\vert F(t,x)\right\vert :x\in%
\overline{\mathrm{Dom}\left( \varphi\right) }\big\}, \\
G^{\#}\left( t\right) & :=\sup\big\{\left\vert G(t,x)\right\vert :x\in%
\overline{\mathrm{Dom}\left( \varphi\right) }\big\}
\end{align*}

\item[\textrm{(A}$_{3}$\textrm{)}] \textit{(Monotonicity and Lipschitz
conditions)} \textit{There exist }$\mu\in L_{loc}^{1}\left( \mathbb{R}%
_{+}\right) $ \textit{and} $\ell\in L_{loc}^{2}\left( \mathbb{R}_{+}\right) $
with $\ell\geq0,$ such that $d\mathbb{P}\otimes dt$--a.e.%
\begin{equation}
\begin{array}{rl}
\left( i\right) & \left\langle x-y,F(t,x)-F(t,y)\right\rangle \,\leq
\mu\left( t\right) |x-y|^{2},\quad\forall\,x,y\in\mathbb{R}^{d},\smallskip
\\
\left( ii\right) & |G(t,x)-G(t,y)|\leq\ell\left( t\right) |x-y|,\quad
\forall\,x,y\in\mathbb{R}^{d}\text{.}%
\end{array}
\label{assumpt lipsch}
\end{equation}
\end{itemize}

We define now the notions of strong and weak solutions for the stochastic
Skorohod equation (\ref{GSSE}).

\begin{definition}
Let $\left( \Omega,\mathcal{F},\mathbb{P},\mathcal{F}_{t},B_{t}\right)
_{t\geq0}$ be given. A pair $\left( X,K\right) :\Omega\times\mathbb{R}%
_{+}\rightarrow\mathbb{R}^{d}\times\mathbb{R}^{d}$ of continuous $\mathcal{F}%
_{t}-$p.m.c.s.p. is a strong solution of the stochastic Skorohod equation (%
\ref{GSSE}) if $\mathbb{P}$--a.s.$,$%
\begin{equation}
\begin{array}{rl}
\left( j\right) & X_{t}\in\overline{\mathrm{Dom}\left( \varphi\right) },%
\text{\ }\forall\,t\geq0,\;\varphi\left( X_{\cdot}\right) \in
L_{loc}^{1}\left( \mathbb{R}_{+}\right) ,\smallskip \\
\left( jj\right) & K_{\cdot}\in BV_{loc}\left( \mathbb{R}_{+};\mathbb{R}%
^{d}\right) ,\text{\quad}K_{0}=0,\smallskip \\
\left( jjj\right) & X_{t}+K_{t}=\xi+\displaystyle\int_{0}^{t}F\left(
s,X_{s}\right) ds+\displaystyle\int_{0}^{t}G\left( s,X_{s}\right)
dB_{s},\;\forall t\geq0,\smallskip \\
\left( jv\right) & \forall\,0\leq s\leq t,\;\forall y:\mathbb{R}%
_{+}\rightarrow\mathbb{R}^{d}\text{ continuous}\smallskip \\
& \displaystyle\int_{s}^{t}\left\langle y\left( r\right)
-X_{r},dK_{r}\right\rangle +\int_{s}^{t}\varphi\left( X_{r}\right)
dr\leq\int_{s}^{t}\varphi\left( y\left( r\right) \right) dr\smallskip \\
& \quad\quad\quad\quad\quad\quad\quad\quad+\displaystyle\int_{s}^{t}\left%
\vert y\left( r\right) -X_{r}\right\vert ^{2}\left( \rho dr+\gamma
d\left\updownarrow K\right\updownarrow _{r}\right) ,%
\end{array}
\label{def sol GSSE}
\end{equation}
which means that%
\begin{equation*}
\left( X_{\cdot}\left( \omega\right) ,K_{\cdot}\left( \omega\right) \right) =%
\mathcal{SP}\left( \partial^{-}\varphi;\xi\left( \omega\right)
,M_{\cdot}\left( \omega\right) \right) ,\quad\mathbb{P}-a.s.,
\end{equation*}
where%
\begin{equation*}
M_{t}=\int_{0}^{t}F\left( s,X_{s}\right) ds+\int_{0}^{t}G\left(
s,X_{s}\right) dB_{s}~.
\end{equation*}
\end{definition}

\begin{definition}
Let $F\left( \omega,t,x\right) :=f\left( t,x\right) $, $G\left(
\omega,t,x\right) :=g\left( t,x\right) $ and $\xi\left( \omega\right)
:=x_{0} $ (be independent of $\omega$). If there exists a stochastic basis $%
\left( \Omega,\mathcal{F},\mathbb{P},\mathcal{F}_{t}\right) _{t\geq0}$, an $%
\mathbb{R}^{k}$--valued $\mathcal{F}_{t}$--Brownian motion $\left\{
B_{t}:t\geq0\right\} $ and a pair $\left( X_{\cdot},K_{\cdot}\right)
:\Omega\times\mathbb{R}_{+}\rightarrow\mathbb{R}^{d}\times\mathbb{R}^{d}$ of
p.m.c.s.p. such that
\begin{equation*}
\left( X_{\cdot}\left( \omega\right) ,K_{\cdot}\left( \omega\right) \right) =%
\mathcal{SP}\left( \partial^{-}\varphi;x_{0},M_{\cdot}\left( \omega\right)
\right) ,\quad\mathbb{P}-a.s.,
\end{equation*}
where%
\begin{equation*}
M_{t}=\int _{0}^{t}f\left( s,X_{s}\right) ds+\int _{0}^{t}g\left(
s,X_{s}\right) dB_{s},
\end{equation*}
then the collection $\left( \Omega,\mathcal{F},\mathbb{P},\mathcal{F}%
_{t},B_{t},X_{t},K_{t}\right) _{t\geq0}$ it is called a weak solution of the
stochastic Skorohod equation (\ref{GSSE}).
\end{definition}

Since the stochastic process $K$ is uniquely determined from $\left(
X,B\right) $ through equation (\ref{def sol GSSE}$-jjj$), we can also say
that $X$ is a strong solution (and respectively $\left( \Omega,\mathcal{F},%
\mathbb{P},\mathcal{F}_{t},B_{t},X_{t}\right) _{t\geq0}$ is a weak solution).

We first give a uniqueness result for strong solutions.

\begin{proposition}
\label{uniq 2}(\textit{Pathwise uniqueness) }Let $\left( \Omega ,\mathcal{F},%
\mathbb{P},\mathcal{F}_{t},B_{t}\right) _{t\geq0}$ be given and assumptions (%
\ref{assumpt input}), (\ref{assumpt phi}) and (\ref{assumpt phi 2}) be
satisfied. The functions $F$ and $G$ are such that assumptions \textrm{(A}$%
_{1}$\textrm{--A}$_{3}$\textrm{)} are satisfied. Then the stochastic
Skorohod equation (\ref{GSSE}) has at most one strong solution.
\end{proposition}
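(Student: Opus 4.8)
The plan is to show that any two strong solutions $(X,K)$ and $(\hat{X},\hat{K})$ of $(\ref{GSSE})$, built on the same stochastic basis with the same Brownian motion $B$ and the same initial datum $\xi$, satisfy $X=\hat{X}$, $\mathbb{P}$--a.s.; the process $K$ is then recovered from $(\ref{def sol GSSE}-jjj)$, so $K=\hat{K}$ follows automatically. Subtracting the two copies of $(\ref{def sol GSSE}-jjj)$ and writing $U_{t}:=X_{t}-\hat{X}_{t}$, we get
\begin{equation*}
U_{t}+\big(K_{t}-\hat{K}_{t}\big)=\int_{0}^{t}\big(F(s,X_{s})-F(s,\hat{X}_{s})\big)ds+\int_{0}^{t}\big(G(s,X_{s})-G(s,\hat{X}_{s})\big)dB_{s}.
\end{equation*}
By the boundedness assumption \textrm{(A}$_{2}$\textrm{)} the drift integrand is dominated by $2F^{\#}$ and the diffusion integrand by $2G^{\#}$, so $U$ is a continuous semimartingale and It\^{o}'s formula may be applied to $\left\vert U_{t}\right\vert ^{2}$. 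Since $K-\hat{K}$ has locally bounded variation, hence contributes nothing to the quadratic variation,
\begin{equation*}
\left\vert U_{t}\right\vert ^{2}=-2\int_{0}^{t}\langle U_{r},dK_{r}-d\hat{K}_{r}\rangle +2\int_{0}^{t}\langle U_{r},F(r,X_{r})-F(r,\hat{X}_{r})\rangle dr+\int_{0}^{t}\left\vert G(r,X_{r})-G(r,\hat{X}_{r})\right\vert ^{2}dr+N_{t},
\end{equation*}
where $N_{t}:=2\int_{0}^{t}\langle U_{r},\big(G(r,X_{r})-G(r,\hat{X}_{r})\big)dB_{r}\rangle$ is a continuous local martingale with $N_{0}=0$.

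The deterministic input is Lemma \ref{uniq lemma}. Since a strong solution satisfies, $\mathbb{P}$--a.s., $\big(X_{\cdot}(\omega),K_{\cdot}(\omega)\big)=\mathcal{SP}\big(\partial^{-}\varphi;\xi(\omega),M_{\cdot}(\omega)\big)$ and likewise for $(\hat{X},\hat{K})$, inequality $(\ref{uniq ineq})$ holds pathwise for almost every $\omega$ and all $0\leq s\leq t$; taking $s=0$ gives
\begin{equation*}
-\int_{0}^{t}\langle U_{r},dK_{r}-d\hat{K}_{r}\rangle \leq \int_{0}^{t}\left\vert U_{r}\right\vert ^{2}\big(2\rho dr+\gamma d\left\updownarrow K\right\updownarrow _{r}+\gamma d\updownarrow\hat{K}\updownarrow_{r}\big).
\end{equation*}
Inserting this together with the monotonicity bound $\langle U_{r},F(r,X_{r})-F(r,\hat{X}_{r})\rangle \leq \mu^{+}(r)\left\vert U_{r}\right\vert ^{2}$ from \textrm{(A}$_{3}$\textrm{)} and the Lipschitz bound $\left\vert G(r,X_{r})-G(r,\hat{X}_{r})\right\vert ^{2}\leq \ell^{2}(r)\left\vert U_{r}\right\vert ^{2}$ from \textrm{(A}$_{3}$\textrm{)}, we arrive at
\begin{equation*}
\left\vert U_{t}\right\vert ^{2}\leq \int_{0}^{t}\left\vert U_{r}\right\vert ^{2}dV_{r}+N_{t},\qquad V_{t}:=4\rho t+2\gamma\left\updownarrow K\right\updownarrow _{t}+2\gamma\updownarrow\hat{K}\updownarrow_{t}+2\int_{0}^{t}\mu^{+}(r)dr+\int_{0}^{t}\ell^{2}(r)dr,
\end{equation*}
where $V$ is a continuous, increasing, adapted process, finite $\mathbb{P}$--a.s. for every $t$ (since $K,\hat{K}\in BV_{loc}$, $\mu\in L_{loc}^{1}$ and $\ell\in L_{loc}^{2}$).

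It remains to deduce $U\equiv 0$. This is exactly the situation handled by the forward stochastic inequality recorded in the Appendix: an estimate $\left\vert U_{t}\right\vert ^{2}\leq \int_{0}^{t}\left\vert U_{r}\right\vert ^{2}dV_{r}+N_{t}$ with $V$ continuous increasing adapted (a.s.\ finite) and $N$ a continuous local martingale vanishing at $0$ forces $\left\vert U_{t}\right\vert ^{2}=0$ for all $t$, $\mathbb{P}$--a.s. Equivalently, one can localize along $\tau_{n}:=\inf\{t\geq 0:V_{t}+\langle N\rangle_{t}\geq n\}$, take expectations to eliminate the martingale term on $[0,\tau_{n}]$, apply the classical Gronwall inequality to obtain $\mathbb{E}\left\vert U_{t\wedge\tau_{n}}\right\vert ^{2}=0$, and let $n\to\infty$ using $\tau_{n}\uparrow\infty$. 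Hence $X=\hat{X}$, and then $K=\hat{K}$ by $(\ref{def sol GSSE}-jjj)$, which is pathwise uniqueness. The delicate point is precisely this last step: the ``Gronwall coefficient'' $V_{t}$ is random and only a.s.\ finite (it involves the total variations $\left\updownarrow K\right\updownarrow _{t}$, $\updownarrow\hat{K}\updownarrow_{t}$), so one cannot take expectations directly in the basic inequality, and the localization / forward-inequality device is what makes the passage rigorous.
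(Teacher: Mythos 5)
Your main line of argument is the paper's own: combine the pathwise subdifferential inequality of Lemma \ref{uniq lemma} with the monotonicity/Lipschitz bounds of \textrm{(A}$_{3}$\textrm{)} to get a structural inequality of the type (\ref{technical ineq 7}) for the two semimartingales, and then conclude by the forward stochastic inequality of the Appendix. The paper does this without ever writing It\^{o}'s formula explicitly: it verifies (\ref{technical ineq 7}) directly (with the factor $\tfrac12 m_{p}+9p\lambda$, $\lambda>1$, in front of $|G(t,X_{t})-G(t,\hat{X}_{t})|^{2}$, absorbed into $V$ through the $\ell^{2}$ term) and invokes Corollary \ref{technical result 5}, which already packages the It\^{o} computation and the exponential weighting $e^{-V}$. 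Your explicit It\^{o} step is harmless, and the discrepancy in constants is cosmetic; in substance the first route you describe is the paper's proof.

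The place where your write-up goes wrong is the ``equivalently'' ending. Localizing at $\tau_{n}=\inf\{t:V_{t}+\langle N\rangle_{t}\geq n\}$ and taking expectations does \emph{not} reduce the inequality $|U_{t}|^{2}\leq\int_{0}^{t}|U_{r}|^{2}dV_{r}+N_{t}$ to a classical Gronwall situation: even on $[0,\tau_{n}]$ the measure $dV_{r}$ remains random (it contains $d\left\updownarrow K\right\updownarrow _{r}$ and $d\updownarrow\hat{K}\updownarrow_{r}$), so $\mathbb{E}\int_{0}^{t\wedge\tau_{n}}|U_{r}|^{2}dV_{r}$ cannot be dominated by $\int_{0}^{t}\mathbb{E}|U_{r\wedge\tau_{n}}|^{2}\,d\Lambda(r)$ for any deterministic $\Lambda$, which is exactly what classical Gronwall would require; moreover your stopping time does not control $\sup_{r\leq t}|U_{r}|$ (the domain of $\varphi$ may be unbounded), so the integrability of $|U_{t\wedge\tau_{n}}|^{2}$ and of $\int_{0}^{t\wedge\tau_{n}}|U_{r}|^{2}dV_{r}$ needed to take expectations is not justified. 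The correct elementary surrogate is the exponential-weight argument: from the differential form of your inequality, $d\big(e^{-2V_{t}}|U_{t}|^{2}\big)\leq e^{-2V_{t}}dN_{t}$, so the nonnegative process $e^{-2V}|U|^{2}$ is dominated by a continuous local martingale null at $0$, hence vanishes identically; this is precisely the mechanism behind Corollary \ref{technical result 5}, and it is why that corollary is stated for $e^{-V}(X-\hat{X})$ rather than for $X-\hat{X}$ itself. So either delete the Gronwall aside or replace it by this supermartingale argument; as written, that alternative ending would not survive scrutiny, while your primary route (citing the Appendix inequality) is sound and coincides with the paper's proof.
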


\begin{proof}
Let $\left( X,K\right) $ and $(\hat{X},\hat{K})$ be two solutions
corresponding to $\xi$ and respectively $\hat{\xi}$. Since
\begin{equation*}
dK_{t}\in\partial^{-}\varphi\left( X_{t}\right) \left( dt\right) \quad\text{%
and}\quad d\hat{K}_{t}\in\partial^{-}\varphi(\hat{X}_{t})\left( dt\right) ,
\end{equation*}
by Lemma \ref{uniq lemma}, for $p\geq1$ and $\lambda>0$%
\begin{equation*}
\begin{array}{l}
\big\langle X_{t}-\hat{X}_{t},\left( F\left( t,X_{t}\right) dt-dK_{t}\right)
-(F(t,\hat{X}_{t})dt-d\hat{K}_{t})\big\rangle\smallskip \\
+\big(\dfrac{1}{2}m_{p}+9p\lambda\big)|G\left( t,X_{t}\right) -G(t,\hat {X}%
_{t})|^{2}dt\leq|X_{t}-\hat{X}_{t}|^{2}dV_{t},%
\end{array}%
\end{equation*}
where%
\begin{equation*}
V_{t}=\int_{0}^{t}\big[\mu\left( s\right) ds+\big(\dfrac{1}{2}m_{p}+9p\lambda%
\big)\ell^{2}\left( s\right) ds+2\rho ds+\gamma d\left\updownarrow
K\right\updownarrow _{s}+\gamma d\updownarrow \hspace{-0.09cm}\hat{K}\hspace{%
-0.05cm}\updownarrow_{s}\big].
\end{equation*}
Therefore, by Corollary \ref{technical result 5} (from the Appendix), we get%
\begin{equation*}
\mathbb{E}\left[ 1\wedge\big|\hspace{-0.05cm}\big|e^{-V}(X-\hat {X})\big|%
\hspace{-0.05cm}\big|_{T}^{p}\right] \leq C_{p,\lambda}~\mathbb{E}\left[
1\wedge|\xi-\hat{\xi}|^{p}\right] ,
\end{equation*}
and the uniqueness follows.\hfill$\smallskip$
\end{proof}

Remark also that in the case of additive noise (i.e. $G$ does not depend
upon $X$) we have existence of a strong solution.

\begin{lemma}
\label{main result 7}Let $\left( \Omega,\mathcal{F},\mathbb{P},\mathcal{F}%
_{t},B_{t}\right) _{t\geq0}$ be given and assumptions (\ref{assumpt input}),
(\ref{assumpt phi}), (\ref{assumpt phi 2}) and (\ref{assumpt phi 3}) be
satisfied. The function $F$ is such that assumptions \textrm{(A}$_{1}$%
\textrm{--A}$_{3}$\textrm{)} are satisfied.

If%
\begin{equation*}
\xi\in L^{0}\big(\Omega,\mathcal{F}_{0},\mathbb{P};\overline{\mathrm{Dom}%
\left( \varphi\right) }\big)
\end{equation*}
and $M$ is a p.m.c.s.p. with $M_{0}=0$, then there exists a unique pair $%
\left( X,K\right) $ of p.m.c.s.p., solution of the problem%
\begin{equation*}
\left( X_{\cdot}\left( \omega\right) ,K_{\cdot}\left( \omega\right) \right) =%
\mathcal{SP}\left( \partial^{-}\varphi;\xi\left( \omega\right)
,M_{\cdot}\left( \omega\right) \right) ,\quad\mathbb{P}-a.s.,
\end{equation*}
i.e.%
\begin{equation*}
\left\{
\begin{array}{l}
X_{t}\left( \omega\right) +K_{t}\left( \omega\right) =\xi\left(
\omega\right) +\displaystyle\int_{0}^{t}F\left( \omega,s,X_{s}\left(
\omega\right) \right) ds+M_{t}\left( \omega\right) ,\quad t\geq 0,\smallskip
\\
dK_{t}\left( \omega\right) \in\partial^{-}\varphi\left( X_{t}\left(
\omega\right) \right) \left( dt\right) ,%
\end{array}
\right.
\end{equation*}
$\mathbb{P}$--a.s.
\end{lemma}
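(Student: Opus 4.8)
The plan is to solve the problem pathwise, invoking the deterministic Proposition \ref{main result 5} for every fixed $\omega $, and then to recover the progressive measurability of the solution by running the frozen--drift approximation scheme (\ref{GSE approx}) at the level of processes, each step of which is a Skorohod problem with \emph{additive} noise and is therefore covered by Corollary \ref{main result 1_conseq}. The one genuinely delicate point will be precisely this propagation of measurability through the iteration; once it is in place, existence and uniqueness fall out of the deterministic theory.

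\textbf{Pathwise existence and uniqueness.} By assumptions \textrm{(A}$_{1}$\textrm{)}--\textrm{(A}$_{3}$\textrm{)} and (\ref{assumpt input}) there is a set $\Omega _{0}\in \mathcal{F}$ with $\mathbb{P}(\Omega _{0})=1$ such that, for each $\omega \in \Omega _{0}$: $\xi (\omega )\in \overline{\mathrm{Dom}(\varphi )}$; $M_{\cdot }(\omega )\in C(\mathbb{R}_{+};\mathbb{R}^{d})$ with $M_{0}(\omega )=0$; and $f^{\omega }(s,x):=F(\omega ,s,x)$ satisfies (\ref{assumpt f 1})--(\ref{assumpt f 2}) (the one--sided Lipschitz bound comes from \textrm{(A}$_{3}$\textrm{)}$(i)$, and, by (\ref{assumpt bound}), $f^{\omega ,\#}=F^{\#}\in L_{loc}^{1}$). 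For such $\omega $, Proposition \ref{main result 5} applied with $x_{0}=\xi (\omega )$, $m=M_{\cdot }(\omega )$, $f=f^{\omega }$ gives a unique pair $(X_{\cdot }(\omega ),K_{\cdot }(\omega ))\in C(\mathbb{R}_{+};\mathbb{R}^{d})^{2}$ solving the stated system; off $\Omega _{0}$ put $(X,K)\equiv (\xi ,0)$. This already yields pathwise uniqueness.

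\textbf{Measurability.} I would fix $n\in \mathbb{N}^{\ast }$ and construct $(X^{n},K^{n})$ as in (\ref{GSE approx}): on $[i/n,(i+1)/n]$,
\begin{equation*}
X_{t}^{n}+\bigl( K_{t}^{n}-K_{i/n}^{n}\bigr) =X_{i/n}^{n}+\int_{i/n}^{t}F\bigl( \omega ,s,X_{s-1/n}^{n}(\omega )\bigr) \,ds+\bigl( M_{t}-M_{i/n}\bigr) .
\end{equation*}
One proves by induction on $i$ that $(X^{n},K^{n})$ is a continuous $\mathcal{F}_{t}$--p.m.c.s.p. on $[0,i/n]$ with the values of $X^{n}$ in $\overline{\mathrm{Dom}(\varphi )}$: in the inductive step the frozen drift $s\mapsto F(\omega ,s,X_{s-1/n}^{n}(\omega ))$ is progressively measurable on $[i/n,(i+1)/n]$ (composition of the Carath\'{e}odory $F$ with a p.m.c.s.p.), so $t\mapsto \int_{i/n}^{t}F(\omega ,s,X_{s-1/n}^{n})\,ds+M_{t}-M_{i/n}$ is a p.m.c.s.p. vanishing at $i/n$ and $X_{i/n}^{n}$ is $\mathcal{F}_{i/n}$--measurable with values in $\overline{\mathrm{Dom}(\varphi )}$, hence Corollary \ref{main result 1_conseq} (after translating the time origin to $i/n$) gives that the one--step solution is a p.m.c.s.p. with values in $\overline{\mathrm{Dom}(\varphi )}$. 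Gluing over $i$ shows that $(X^{n},K^{n})$ is a p.m.c.s.p. on every $[0,T]$; equivalently $(X^{n},K^{n})=\mathcal{SP}(\partial ^{-}\varphi ;\xi ,m_{n})$ with the p.m.c.s.p. input $m_{n}(t)=\int_{0}^{t}F(\omega ,s,X_{s-1/n}^{n}(\omega ))\,ds+M_{t}(\omega )$.

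\textbf{Passage to the limit and conclusion.} For $\omega \in \Omega _{0}$ the family $\mathcal{M}^{\omega }:=\{m_{n}(\omega ):n\in \mathbb{N}^{\ast }\}$ is bounded and equicontinuous in $C([0,T];\mathbb{R}^{d})$ by (\ref{assumpt bound}), hence relatively compact, so the a priori estimates of Theorem \ref{main result 2}, Remark \ref{remark 1} and Arzel\`{a}--Ascoli give, exactly as in the proof of Proposition \ref{main result 5}, that $(X_{\cdot }^{n}(\omega ),K_{\cdot }^{n}(\omega ))\to (X_{\cdot }(\omega ),K_{\cdot }(\omega ))$ in $C([0,T];\mathbb{R}^{d})^{2}$ for every $T$. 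Since a pointwise (in $\omega $) limit of a p.m.c.s.p. is again a p.m.c.s.p., the pair $(X,K)$ from the first step is a p.m.c.s.p., and by pathwise uniqueness it is the unique solution. The only step requiring real care is the measurability bookkeeping of the previous paragraph; the boundedness and equicontinuity of $\mathcal{M}^{\omega }$ and the inheritance of measurability by the limit are routine.
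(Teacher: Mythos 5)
Your proposal is correct and follows essentially the same route as the paper: the paper's proof likewise applies Corollary \ref{main result 1_conseq} to the delayed--drift approximating system (\ref{GSE approx}) (which implicitly contains your interval--by--interval iteration, since on each $[i/n,(i+1)/n]$ the drift is a known p.m.c.s.p.) and then passes to the limit exactly as in the proof of Proposition \ref{main result 5}. Your write--up merely makes explicit the measurability bookkeeping and the pathwise uniqueness via Proposition \ref{main result 5}, which the paper leaves implicit.
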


\begin{proof}
Applying Corollary \ref{main result 1_conseq} to the approximating problem%
\begin{equation*}
\left\{
\begin{array}{l}
X_{t}^{n}\left( \omega\right) +K_{t}^{n}\left( \omega\right) =\xi\left(
\omega\right) +\displaystyle\int_{0}^{t}F(\omega,s,X_{s-1/n}^{n}\left(
\omega\right) )ds+M_{t}\left( \omega\right) ,\quad t\geq0,\smallskip \\
dK_{t}^{n}\left( \omega\right) \in\partial^{-}\varphi\left( X_{t}^{n}\left(
\omega\right) \right) \left( dt\right) ,%
\end{array}
\right.
\end{equation*}
we conclude that there exists a unique solution $\left( X^{n},K^{n}\right) $
of p.m.c.s.p. The solution $\left( X,K\right) $ is obtained as the limit of
the sequence $\left( X^{n},K^{n}\right) $, exactly as in the proof of
Proposition \ref{main result 5}.\hfill$\smallskip$
\end{proof}

In order to study the general stochastic Skorohod equation (\ref{GSSE}) we
shall consider only the case when $F,G$ and $\xi$ are independent of $\omega$
and, to highlight this, the coefficients will be denoted by $f$ and $g$
respectively.

Let us consider equation%
\begin{equation}
\left\{
\begin{array}{l}
X_{t}+K_{t}=x_{0}+\displaystyle\int_{0}^{t}f\left( s,X_{s}\right)
ds+\int_{0}^{t}g\left( s,X_{s}\right) dB_{s},\quad t\geq0,\smallskip \\
dK_{t}\left( \omega\right) \in\partial^{-}\varphi\left( X_{t}\left(
\omega\right) \right) \left( dt\right) ,%
\end{array}
\right.  \label{GSSE 2}
\end{equation}
where $f:\mathbb{R}_{+}\times\mathbb{R}^{d}\rightarrow\mathbb{R}^{d}$ and $g:%
\mathbb{R}_{+}\times\mathbb{R}^{d}\rightarrow\mathbb{R}^{d\times k}$.

We recall the definition of $f^{\#}$, $g^{\#}$ given as in (\ref{def f sharp}%
).

\begin{theorem}
\label{main result 6}Let assumptions (\ref{assumpt input}), (\ref{assumpt
phi}), (\ref{assumpt phi 2}) and (\ref{assumpt phi 3}) be satisfied. The
functions $f$ and $g$ are suppose to be $\left( \mathcal{B}_{1},\mathbb{R}%
^{d}\right) $--Carath\'{e}odory functions satisfying moreover the
boundedness conditions%
\begin{equation*}
\int _{0}^{T}\left[ |f^{\#}\left( s\right) |^{2}+|g^{\#}\left( s\right) |^{4}%
\right] ds<\infty,\;\forall T\geq0.
\end{equation*}
If $x_{0}\in\overline{\mathrm{Dom}\left( \varphi\right) }$ then equation (%
\ref{GSSE 2}) has a weak solution $(\Omega,\mathcal{F},\mathbb{P},\mathcal{F}%
_{t},X_{t},K_{t},$ $B_{t})_{t\geq0}$ .
\end{theorem}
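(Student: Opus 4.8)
\emph{Proof proposal.} The plan is to realise a weak solution as a limit of the delayed approximation already used in Lemma \ref{main result 7} and Proposition \ref{main result 5}, and to pass to the limit by a tightness / Skorokhod--representation argument, using the continuity of the deterministic Skorohod map (Corollary \ref{main result 3}) to treat the reflection term $K$ and a martingale--problem argument to treat the It\^{o} term. First I would fix $T>0$ and, on some stochastic basis carrying a $k$--dimensional Brownian motion $B$, consider for $n\geq 1$ the delayed problem $X^{n}_{t}=x_{0}$ for $t<0$ and
\[
\left( X^{n}_{\cdot},K^{n}_{\cdot}\right)=\mathcal{SP}\big(\partial^{-}\varphi;\,x_{0},\,M^{n}_{\cdot}\big),\qquad M^{n}_{t}:=\int_{0}^{t}f\big(s,X^{n}_{s-1/n}\big)\,ds+\int_{0}^{t}g\big(s,X^{n}_{s-1/n}\big)\,dB_{s}.
\]
On each interval $[i/n,(i+1)/n]$ the input $M^{n}$ is an already constructed p.m.c.s.p.\ with $M^{n}_{0}=0$, so by iterating Corollary \ref{main result 1_conseq} the pair $(X^{n},K^{n})$ exists, is unique and is a p.m.c.s.p.

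Since $X^{n}_{s}\in\overline{\mathrm{Dom}\left( \varphi\right)}$ for all $s$, the integrands are dominated by $f^{\#}$ and $g^{\#}$, so the hypothesis $\int_{0}^{T}\big(|f^{\#}|^{2}+|g^{\#}|^{4}\big)\,ds<\infty$ together with the Burkholder--Davis--Gundy inequality gives bounds, uniform in $n$, of the form $\mathbb{E}\,\Vert M^{n}\Vert_{T}^{4}\leq C$ and $\mathbb{E}\,|M^{n}_{t}-M^{n}_{s}|^{4}\leq C\big(|t-s|^{2}+(\int_{s}^{t}|g^{\#}|^{2})^{2}\big)$; by Kolmogorov's criterion the laws of $M^{n}$ are tight in $C([0,T];\mathbb{R}^{d})$. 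Feeding these bounds into Theorem \ref{main result 2} (with the compact set $\mathcal{M}$ replaced by a suitable tight family, via Remark \ref{remark 1}) yields $\Vert X^{n}\Vert_{T}+\left\updownarrow K^{n}\right\updownarrow_{T}$ bounded in probability and $|X^{n}_{t}-X^{n}_{s}|+\left\updownarrow K^{n}\right\updownarrow_{t}-\left\updownarrow K^{n}\right\updownarrow_{s}\leq C_{T}\sqrt{\boldsymbol{\upmu}_{M^{n}}(t-s)}$. Combined with the tightness criteria for $C(\mathbb{R}_{+};\mathbb{R}^{d})$ collected in the Appendix, the laws of the tuple $\big(X^{n},K^{n},\left\updownarrow K^{n}\right\updownarrow,M^{n},B\big)$ are tight, so by Prokhorov's theorem and the Skorokhod representation theorem, along a subsequence there are, on a common probability space, copies $(\tilde X^{n},\tilde K^{n},\tilde A^{n},\tilde M^{n},\tilde B^{n})$ with the same laws, converging a.s.\ uniformly on $[0,T]$ to a limit $(\tilde X,\tilde K,\tilde A,\tilde M,\tilde B)$, with $\tilde A$ increasing.

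It then remains to identify the limit. For each $n$ one has $(\tilde X^{n},\tilde K^{n})=\mathcal{SP}(\partial^{-}\varphi;x_{0},\tilde M^{n})$ (same law) and $\tilde M^{n}\to\tilde M$, $\tilde X^{n}\to\tilde X$, $\tilde K^{n}\to\tilde K$ uniformly, so Corollary \ref{main result 3} gives $(\tilde X,\tilde K)=\mathcal{SP}(\partial^{-}\varphi;x_{0},\tilde M)$; in particular $\tilde X_{t}\in\overline{\mathrm{Dom}(\varphi)}$, $\tilde K\in BV_{loc}$ and $d\tilde K_{t}\in\partial^{-}\varphi(\tilde X_{t})(dt)$. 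It remains to show that, for the filtration generated by $(\tilde X,\tilde B)$, $\tilde B$ is a Brownian motion and $\tilde M_{t}=\int_{0}^{t}f(s,\tilde X_{s})\,ds+\int_{0}^{t}g(s,\tilde X_{s})\,d\tilde B_{s}$. The drift part converges by dominated convergence, using $|f(s,\tilde X^{n}_{s-1/n})|\leq f^{\#}(s)$, continuity of $f(s,\cdot)$ and $\tilde X^{n}_{s-1/n}\to\tilde X_{s}$ a.e. For the stochastic part I would pass to the limit at the level of the martingale problem: for each $n$, $\tilde B^{n}$ is a Brownian motion and $\tilde N^{n}_{t}:=\tilde M^{n}_{t}-\int_{0}^{t}f(s,\tilde X^{n}_{s-1/n})\,ds$ is a continuous $L^{2}$--martingale with $\langle\tilde N^{n}\rangle_{t}=\int_{0}^{t}(gg^{\ast})(s,\tilde X^{n}_{s-1/n})\,ds$ and $\langle\tilde N^{n,i},\tilde B^{n,j}\rangle_{t}=\int_{0}^{t}g_{ij}(s,\tilde X^{n}_{s-1/n})\,ds$; the uniform $L^{4}$ control of $g^{\#}$ gives the uniform integrability needed to pass to the limit in each of these identities, whence $\tilde B$ is a Brownian motion and $\tilde N_{t}:=\tilde M_{t}-\int_{0}^{t}f(s,\tilde X_{s})\,ds$ is a continuous martingale with $\langle\tilde N\rangle_{t}=\int_{0}^{t}(gg^{\ast})(s,\tilde X_{s})\,ds$ and $\langle\tilde N^{i},\tilde B^{j}\rangle_{t}=\int_{0}^{t}g_{ij}(s,\tilde X_{s})\,ds$. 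By the representation theorem for a continuous martingale with absolutely continuous bracket this forces $\tilde N_{t}=\int_{0}^{t}g(s,\tilde X_{s})\,d\tilde B_{s}$ (enlarging the space by an independent Brownian motion if $g$ degenerates), which is the claim; a diagonal argument over $T\nearrow\infty$ then produces a weak solution on all of $\mathbb{R}_{+}$.

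The main obstacle is precisely this last identification: because $g$ is only measurable in $t$ and merely continuous --- not Lipschitz --- in $x$, one cannot simply invoke $L^{2}$--continuity of stochastic integration, and routing the limit through the brackets $\langle\tilde N^{n}\rangle$ and $\langle\tilde N^{n},\tilde B^{n}\rangle$, which is what the fourth--moment hypothesis $\int_{0}^{T}|g^{\#}|^{4}\,ds<\infty$ is there to make rigorous, is the delicate step. Checking that $\tilde B$ remains a Brownian motion for the limit filtration (equivalently, L\'{e}vy's characterisation applied to the limiting brackets) and verifying progressive measurability of all limit objects are the remaining technical points.
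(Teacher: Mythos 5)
Your skeleton coincides with the paper's: the same delayed approximation $(X^n,K^n)=\mathcal{SP}(\partial^-\varphi;x_0,M^n)$ built from Lemma \ref{main result 7}/Corollary \ref{main result 1_conseq}, tightness of $M^n$ from the fourth–moment bounds via $f^{\#},g^{\#}$ (the paper uses Proposition \ref{technical result 4}), tightness of $(X^n,K^n,\left\updownarrow K^n\right\updownarrow)$ from the a priori estimates of Theorem \ref{main result 2} (in the paper this is packaged as Proposition \ref{technical result 7}, which is also where your deterministic constant $C_T$ must really be the random $C_{T,M^n}=G(M^n)$, controlled because $G$ is bounded on compacts), and then Prohorov plus Skorokhod representation. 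Where you genuinely diverge is in the two identification steps. For the reflection term you apply the continuity of the Skorohod map (Corollary \ref{main result 3}) pathwise on the representation space; this works, but it silently requires transferring the property "$(X^n,K^n)=\mathcal{SP}(\partial^-\varphi;x_0,M^n)$ a.s." across equality of laws, i.e.\ a measurability argument for the solution set of Definition \ref{sol GSP} (reduction of condition $(jv)$ to a countable family of test functions). The paper avoids this by transferring only closed/lower–semicontinuous pathwise inequalities through its Fatou-type Corollaries \ref{technical result 9} and \ref{technical result 10} and then reassembling the solution property via Lemma \ref{echiv for (jv)}. For the martingale part you run a martingale-problem argument: identify $\langle\tilde N^n\rangle$ and $\langle\tilde N^n,\tilde B^n\rangle$, pass to the limit using the $L^4$ integrability of $g^{\#}$ for uniform integrability, and conclude; note that once the cross-bracket $\langle\tilde N^i,\tilde B^j\rangle_t=\int_0^t g_{ij}(s,\tilde X_s)ds$ is in hand, the quadratic variation of $\tilde N-\int_0^\cdot g(s,\tilde X_s)d\tilde B_s$ is zero, so no representation theorem or enlargement of the space is actually needed. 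The paper instead identifies the limit integral directly: Proposition \ref{technical result 11} transfers the identity $X^n_t+K^n_t-S_t(X^n,B^n)=0$ to the representation space, and Proposition \ref{technical result 8} (continuity of $(X,B)\mapsto\int_0^\cdot X\,dB$ under joint convergence in probability, with $\int_0^T|g(s,\bar X^n_{s-1/n})-g(s,\bar X_s)|^2ds\to0$ by continuity of $g(s,\cdot)$ and domination by $g^{\#}\in L^2$) passes to the limit, so your "main obstacle" is dispatched without brackets. Your route is the more classical one and is correct modulo the measurability point above; the paper's route buys a shorter limit identification at the price of having Propositions \ref{technical result 8} and \ref{technical result 11} available.
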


\begin{remark}
Usually, when $G$ is Lipschitz, a fixed point argument is used (based on
Banach contraction theorem). But, in our case this argument doesn't work
even for the drift part $F\equiv0$, as it can be see from inequality (\ref%
{main result 2_ineq 2}), we have different order of the estimates in the
left and in the right side of the inequality.
\end{remark}

\begin{proof}
\noindent\textrm{Step 1.}\textit{\ Approximating sequence.}

Let $\left( \Omega,\mathcal{F},\mathbb{P},\mathcal{F}_{t}^{B},B_{t}\right)
_{t\geq0}$ be a stochastic basis. Applying Lemma \ref{main result 7}, we
deduce that there exists a unique pair $\left( X^{n},K^{n}\right)
:\Omega\times\mathbb{R}_{+}\rightarrow\mathbb{R}^{d}\times\mathbb{R}^{d}$ of
$\mathcal{F}_{t}^{B}$--p.m.c.s.p. such that%
\begin{equation}
\left\{
\begin{array}{l}
X_{t}^{n}+K_{t}^{n}=x_{0}+\displaystyle\int_{0}^{t}f(s,X_{s-1/n}^{n})ds+%
\int_{0}^{t}g(s,X_{s-1/n}^{n})dB_{s},\quad t\geq0,\smallskip \\
dK_{t}^{n}\left( \omega\right) \in\partial^{-}\varphi\left( X_{t}^{n}\left(
\omega\right) \right) \left( dt\right) .%
\end{array}
\right.  \label{GSSE approx}
\end{equation}
Denote%
\begin{equation*}
M_{t}^{n}:=\int_{0}^{t}f\left( s,X_{s-1/n}^{n}\right) ds+\int_{0}^{t}g\left(
s,X_{s-1/n}^{n}\right) dB_{s}~.
\end{equation*}
Since%
\begin{equation*}
\begin{array}{l}
\mathbb{E}\left[ \sup\limits_{0\leq\theta\leq\varepsilon}\left\vert
M_{t+\theta}^{n}-M_{t}^{n}\right\vert ^{4}\right] \leq C\left[ \left( %
\displaystyle\int_{t}^{t+\varepsilon}f^{\#}\left( s\right) ds\right)
^{4}+\left( \int_{t}^{t+\varepsilon}\left\vert g^{\#}\left( s\right)
\right\vert ^{2}ds\right) ^{2}\right] \smallskip \\
\quad\leq\varepsilon C\left[ \sup_{t\in\left[ 0,T\right] }\left( %
\displaystyle\int_{t}^{t+\varepsilon}\left\vert f^{\#}\left( s\right)
\right\vert ^{2}ds\right) ^{2}+\sup_{t\in\left[ 0,T\right]
}\int_{t}^{t+\varepsilon}\left\vert g^{\#}\left( s\right) \right\vert ^{4}ds%
\right]%
\end{array}%
\end{equation*}
using Proposition \ref{technical result 4}, we deduce that the family of
laws of $\left\{ M^{n}:n\geq1\right\} $ is tight on $C\left( \mathbb{R}_{+};%
\mathbb{R}^{d}\right) .$

Therefore by Theorem \ref{technical result 3} for all $T\geq0$%
\begin{equation*}
\lim_{N\nearrow\infty}\left[ \sup_{n\geq1}\mathbb{P}\left( \left\Vert
M^{n}\right\Vert _{T}\geq N\right) \right] =0,
\end{equation*}
and, for all $a>0$ and $T>0,$%
\begin{equation}
\lim_{\varepsilon\searrow0}\left[ \sup_{n\geq1}\mathbb{P}\big(\left\{
\mathbf{m}_{M^{n}}\left( \varepsilon\right) \geq a\right\} \big)\right] =0.
\label{technical result 2}
\end{equation}
Recalling the definition%
\begin{equation*}
\boldsymbol{\upmu}_{M^{n}}=\varepsilon+\mathbf{m}_{M^{n}}\left(
\varepsilon\right) ,
\end{equation*}
where $\mathbf{m}$ is the modulus of continuity, we see that we can replace
in \ref{technical result 2} $\mathbf{m}_{M^{n}}$ by $\boldsymbol{\upmu}%
_{M^{n}}.\smallskip$

\noindent\textrm{Step 2}\textit{\ Tightness.}

Let $T\geq0$ be arbitrary. We now show that the family of laws of the random
variables $U^{n}=\left( X^{n},K^{n},\left\updownarrow
K^{n}\right\updownarrow \right) $ is tight on $C\left( \left[ 0,T\right] ;%
\mathbb{R}^{2d+1}\right) .$

From (\ref{main result 2_ineq 1}$-c$) we deduce that%
\begin{equation*}
\mathbf{m}_{U^{n}}\left( \varepsilon\right) \leq G\left( M^{n}\right) \sqrt{%
\boldsymbol{\upmu}_{M^{n}}\left( \varepsilon\right) },\;\text{a.s.,}
\end{equation*}
where $G:C\left( \left[ 0,T\right] ;\mathbb{R}^{d}\right) \rightarrow
\mathbb{R}_{+}$ and%
\begin{align*}
G\left( x\right) & :=C_{T,x}=\exp\left[ C\left( 1+T+\left\Vert x\right\Vert
_{T}+B_{x}\right) \right] , \\
B_{x} & :=1/\boldsymbol{\upmu}_{x}^{-1}\left( \delta^{2}e^{-C\left(
1+T+\left\Vert x\right\Vert _{T}\right) }\right) .
\end{align*}
From (\ref{technical result 1}) we see that $G$ is bounded on compact subset
of $C\left( \left[ 0,T\right] ;\mathbb{R}^{d}\right) $ and therefore by
Proposition \ref{technical result 7}, $\left\{ U^{n};n\in\mathbb{N}^{\ast
}\right\} $ is tight on $C\left( \left[ 0,T\right] ;\mathbb{R}^{d}\right) $.

Using the Prohorov theorem we see that there exists a subsequence (still
denoted with $n$) such that%
\begin{equation*}
\left( X^{n},K^{n},\left\updownarrow K^{n}\right\updownarrow ,B\right)
\rightarrow\left( X,K,V,B\right) \quad\text{in law,\ as }n\rightarrow\infty
\end{equation*}
on $C\left( \left[ 0,T\right] ;\mathbb{R}^{2d+1+k}\right) $ and applying the
Skorohod theorem, we can choose a probability space $\left( \Omega,\mathcal{F%
},\mathbb{P}\right) $ and some random quadruples $(\bar {X}^{n},\bar{K}^{n},%
\bar{V}^{n},\bar{B}^{n})$, $(\bar{X},\bar{K},\bar{V},\bar{B})$ defined on $%
\left( \Omega,\mathcal{F},\mathbb{P}\right) $ such that%
\begin{align*}
\mathcal{L}(\bar{X}^{n},\bar{K}^{n},\bar{V}^{n},\bar{B}^{n}) & =\mathcal{L}%
\left( X^{n},K^{n},\left\updownarrow K^{n}\right\updownarrow ,B^{n}\right) \\
\mathcal{L}(\bar{X},\bar{K},\bar{V},\bar{B}) & =\mathcal{L}(X,K,V,B)
\end{align*}
and%
\begin{equation*}
(\bar{X}^{n},\bar{K}^{n},\bar{V}^{n},\bar{B}^{n})%
%TCIMACRO{%
%\TeXButton{\xrightarrow[]{\mathbb{P}-a.s.}}{\xrightarrow[]{\mathbb
%{P}-a.s.}}}%
%BeginExpansion
\xrightarrow[]{\mathbb
{P}-a.s.}%
%EndExpansion
(\bar{X},\bar{K},\bar{V},\bar{B}),\;\text{as }n\rightarrow\infty,\;\text{in }%
C(\left[ 0,T\right] ;\mathbb{R}^{2d+1+k}).
\end{equation*}
From Proposition \ref{technical result 8} we deduce that $%
%TCIMACRO{\TeXButton{\big(}{\big(}}%
%BeginExpansion
\big(%
%EndExpansion
\bar{B}^{n},\{\mathcal{F}_{t}^{\bar{X}^{n},\bar{K}^{n},\bar{V}^{n},\bar{B}%
^{n}}\}%
%TCIMACRO{\TeXButton{\big)}{\big)}}%
%BeginExpansion
\big)%
%EndExpansion
$, $n\geq1$, and $%
%TCIMACRO{\TeXButton{\big(}{\big(}}%
%BeginExpansion
\big(%
%EndExpansion
\bar{B},\{\mathcal{F}_{t}^{\bar{X},\bar{K},\bar{V},\bar{B}}\}%
%TCIMACRO{\TeXButton{\big)}{\big)}}%
%BeginExpansion
\big)%
%EndExpansion
$ are Brownian motions.$\smallskip$

\noindent\textrm{Step 3}\textit{\ Passing to the limit.}

Since we also have $\left( X^{n},K^{n},\left\updownarrow
K^{n}\right\updownarrow ,B\right) \rightarrow(\bar{X},\bar{K},\bar{V},\bar{B}%
),$ in law, then by Corollary \ref{technical result 9} we deduce that for
all $0\leq s\leq t$, $\mathbb{P}$--a.s.%
\begin{equation}
\begin{array}{l}
\bar{X}_{0}=x_{0},\quad\bar{K}_{0}=0,\quad\bar{X}_{t}\in\overline {\mathrm{%
Dom}\left( \varphi\right) },\smallskip \\
\left\updownarrow \bar{K}\right\updownarrow _{t}-\left\updownarrow \bar {K}%
\right\updownarrow _{s}\leq\bar{V}_{t}-\bar{V}_{s}\quad\text{and}\quad0=\bar{%
V}_{0}\leq\bar{V}_{s}\leq\bar{V}_{t}%
\end{array}
\label{technical ineq 5}
\end{equation}
Moreover, since for all $0\leq s<t$,$\;n\in\mathbb{N}^{\ast}$%
\begin{equation*}
\begin{array}{r}
\displaystyle\int_{s}^{t}\varphi\left( X_{r}^{n}\right)
dr\leq\int_{s}^{t}\varphi\left( y\left( r\right) \right)
dr-\int_{s}^{t}\left\langle y\left( r\right)
-X_{r}^{n},dK_{r}^{n}\right\rangle \smallskip \\
\displaystyle+\int_{s}^{t}\left\vert y\left( r\right) -X_{r}^{n}\right\vert
^{2}\left( \rho dr+\gamma d\left\updownarrow K^{n}\right\updownarrow
_{r}\right) ,\;\text{a.s.,}%
\end{array}%
\end{equation*}
then by Corollary \ref{technical result 10} we infer that%
\begin{equation}
\begin{array}{r}
\displaystyle\int_{s}^{t}\varphi\left( \bar{X}_{r}\right)
dr\leq\int_{s}^{t}\varphi\left( y\left( r\right) \right)
dr-\int_{s}^{t}\left\langle y\left( r\right) -\bar{X}_{r},d\bar{K}%
_{r}\right\rangle \smallskip \\
+\displaystyle\int_{s}^{t}\left\vert y\left( r\right) -\bar{X}%
_{r}\right\vert ^{2}\left( \rho dr+\gamma d\bar{V}_{r}\right) .%
\end{array}
\label{technical ineq 6}
\end{equation}
Hence, based on (\ref{technical ineq 5}) and (\ref{technical ineq 7}) and
Lemma \ref{echiv for (jv)} we have
\begin{equation*}
d\bar{K}_{r}\in\partial^{-}\varphi\left( \bar{X}_{r}\right) \left( dr\right)
.
\end{equation*}
Let%
\begin{equation*}
S_{t}\left( Y,B\right) :=x_{0}+\int _{0}^{t}f\left( s,Y_{s}\right) ds+\int
_{0}^{t}g\left( s,Y_{s}\right) dB_{s},\;\;t\geq0.
\end{equation*}
By Proposition \ref{technical result 11} it follows%
\begin{equation*}
\mathcal{L}(\bar{X}^{n},\bar{K}^{n},\bar{V}^{n},\bar{B}^{n},S_{t}(\bar{X}%
^{n},\bar{B}^{n}))=\mathcal{L}\left( X^{n},K^{n},\left\updownarrow
K^{n}\right\updownarrow ,B^{n},S_{t}(X^{n},B^{n})\right)
\end{equation*}
Since for every $t\geq0,$%
\begin{equation*}
X_{t}^{n}+K_{t}^{n}-S_{t}(X^{n},B^{n})=0,\;a.s.,
\end{equation*}
then by Corollary \ref{technical result 9} we have%
\begin{equation*}
\bar{X}_{t}^{n}+\bar{K}_{t}^{n}-S_{t}(\bar{X}^{n},\bar{B}^{n})=0,\;a.s.,
\end{equation*}
and consequently, letting $n\rightarrow\infty$,%
\begin{equation*}
\bar{X}_{t}+\bar{K}_{t}-S_{t}(\bar{X},\bar{B})=0,\;a.s.
\end{equation*}
Hence we obtain that, $\mathbb{P}$--a.s.,%
\begin{equation*}
\bar{X}_{t}+\bar{K}_{t}=x_{0}+\int _{0}^{t}f\left( s,\bar{X}_{s}\right)
ds+\int _{0}^{t}g\left( s,\bar{X}_{s}\right) d\bar{B}_{s},\;\forall~t\in%
\left[ 0,T\right] ,
\end{equation*}
and consequently $\Big(\bar{\Omega},\mathcal{\bar{F}},\mathbb{\bar{P}},%
\mathcal{F}_{t}^{\bar{B},\bar{X}},\bar{X}_{t},\bar{K}_{t},\bar{B}_{t}\Big)%
_{t\geq0}$ is a weak solution.\hfill$\smallskip$
\end{proof}

Since the stochastic process $K$ is uniquely determined by $\left(
X,B\right) $ via equation (\ref{GSSE 2}), then a weak solution for the
stochastic differential equation is a sextuplet ($\Omega $,$\mathcal{F}$,$%
\mathbb{P}$,$\left\{ \mathcal{F}_{t}\right\} _{t\geq 0}$,$X$,$B$). We know
that weak existence and pathwise uniqueness\textit{\ }implies strong
existence (see Theorem 3.55 in \cite{pa-ra/14} or Theorem 1.1 in \cite%
{ik-wa/81}). Hence we deduce from Theorem \ref{main result 6} and
Proposition \ref{uniq 2}:

\begin{theorem}
Let assumptions (\ref{assumpt input}), (\ref{assumpt phi}), (\ref{assumpt
phi 2}) and (\ref{assumpt phi 3}) be satisfied. The functions $f$ and $g$
are suppose to be $\left( \mathcal{B}_{1},\mathbb{R}^{d}\right) $--Carath%
\'{e}odory functions satisfying moreover assumption \textrm{(A}$_{3}$\textrm{%
)} and boundedness conditions%
\begin{equation*}
\int _{0}^{T}\left[ |f^{\#}\left( s\right) |^{2}+|g^{\#}\left( s\right) |^{4}%
\right] ds<\infty,\;\forall T\geq0.
\end{equation*}
If $x_{0}\in\overline{\mathrm{Dom}\left( \varphi\right) }$ then equation (%
\ref{GSSE 2}) has a unique strong solution $\left( X_{t},K_{t}\right)
_{t\geq0}~.$
\end{theorem}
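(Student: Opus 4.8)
The plan is to obtain the statement as an instance of the Yamada--Watanabe principle, combining the weak existence already granted by Theorem \ref{main result 6} with the pathwise uniqueness granted by Proposition \ref{uniq 2}. First I would check that the hypotheses of both results are in force: since $f,g$ are $\left( \mathcal{B}_{1},\mathbb{R}^{d}\right) $--Carath\'{e}odory, assumption \textrm{(A}$_{1}$\textrm{)} holds; the boundedness condition $\int_{0}^{T}[|f^{\#}(s)|^{2}+|g^{\#}(s)|^{4}]\,ds<\infty$ is exactly the hypothesis of Theorem \ref{main result 6}, and by the Cauchy--Schwarz inequality on the bounded interval $[0,T]$ it implies $\int_{0}^{T}f^{\#}(s)\,ds<\infty$ and $\int_{0}^{T}|g^{\#}(s)|^{2}\,ds<\infty$, i.e.\ assumption \textrm{(A}$_{2}$\textrm{)}; assumption \textrm{(A}$_{3}$\textrm{)} is imposed directly, so Proposition \ref{uniq 2} applies. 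Thus we already possess both a weak solution and pathwise uniqueness.

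Next I would record the by now familiar reduction: the constraint process $K$ is not a free unknown but is read off pathwise from $(X,B)$ through $K_{t}=x_{0}+\int_{0}^{t}f(s,X_{s})\,ds+\int_{0}^{t}g(s,X_{s})\,dB_{s}-X_{t}$, and the Skorohod solution map $m\mapsto(x,k)$ is continuous by Theorem \ref{main result 2} and Corollary \ref{main result 3}; consequently a weak solution of (\ref{GSSE 2}) amounts to the datum of a sextuplet $(\Omega,\mathcal{F},\mathbb{P},\{\mathcal{F}_{t}\},X,B)$, and strong existence for the pair $(X,K)$ is equivalent to strong existence for $X$ alone. This places us precisely in the framework in which the implication ``weak existence $+$ pathwise uniqueness $\Rightarrow$ strong existence and uniqueness'' is available.

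Then I would invoke that implication (Theorem 3.55 in \cite{pa-ra/14}, or Theorem 1.1 in \cite{ik-wa/81}): the weak solution produced by Theorem \ref{main result 6} can be realized on the prescribed stochastic basis with the prescribed Brownian motion $B$, with $X$ adapted to the augmented natural filtration of $B$; pathwise uniqueness from Proposition \ref{uniq 2} then forces this strong solution to be unique in the strong sense. Recovering $K$ as above yields the unique strong solution $\left( X_{t},K_{t}\right) _{t\geq0}$ of (\ref{GSSE 2}).

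The only point requiring some care --- and hence the ``main obstacle'', though a rather mild one --- is to ensure that the multivalued/reflection structure does not obstruct the Yamada--Watanabe machinery; this is exactly what the continuity of the Skorohod map and the pathwise determinacy of $K$ recorded above take care of, so no genuinely new argument is needed. All the substance lies in Theorems \ref{main result 6} and \ref{main result 2} and in Proposition \ref{uniq 2}.
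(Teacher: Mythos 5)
Your argument is correct and coincides with the paper's own proof: the authors likewise observe that $K$ is determined pathwise by $(X,B)$, reduce a weak solution to a sextuplet $(\Omega,\mathcal{F},\mathbb{P},\{\mathcal{F}_{t}\},X,B)$, and combine the weak existence of Theorem \ref{main result 6} with the pathwise uniqueness of Proposition \ref{uniq 2} via the Yamada--Watanabe principle (Theorem 3.55 in \cite{pa-ra/14}, Theorem 1.1 in \cite{ik-wa/81}). Your additional verification that the stated boundedness condition implies \textrm{(A}$_{2}$\textrm{)} by Cauchy--Schwarz is a correct, if implicit in the paper, detail.
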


\section{Appendix}

\subsection{Applications of Fatou's Lemma}

The next result is a well known consequence of weak convergence of
probability measures (for its proof see, e.g. \cite[Proposition 1.22]%
{pa-ra/14}).

\begin{proposition}
\label{technical result 12}Let $\left( \mathbb{X},\rho\right) $ be a
separable metric space. Let $\varphi:\mathbb{X}\rightarrow(-\infty,\infty]$
be a lower semicontinuous function. If $X$ and $X_{n}$ are $\mathbb{X}$%
--valued random variable, for $n\in\mathbb{N}^{\ast},$ such that%
\begin{equation*}
\begin{array}{rl}
\left( i\right) & X_{n}%
%TCIMACRO{\TeXButton{\xrightarrow[]{law}}{\xrightarrow[]{law}}}%
%BeginExpansion
\xrightarrow[]{law}%
%EndExpansion
X,\;\text{as }n\rightarrow\infty,%
\end{array}%
\end{equation*}
and there exists a continuous function $\alpha:\mathbb{X}\rightarrow \mathbb{%
R}$ such that
\begin{equation*}
\begin{array}{rl}
\left( ii\right) & \alpha\left( x\right) \leq\varphi\left( x\right)
,\quad\forall~x\in\mathbb{X},\smallskip \\
\left( iii\right) & \left\{ \alpha\left( X_{n}\right) :n\in \mathbb{N}%
^{\ast}\right\} \text{ is a uniformly integrable family,}%
\end{array}%
\end{equation*}
then the expectations $\mathbb{E}\varphi\left( X\right) $ and $\mathbb{E}%
\varphi\left( X_{n}\right) $ exist for all $n\in\mathbb{N},$ and
\begin{equation*}
\mathbb{-}\infty<\mathbb{E}\varphi\left( X\right) \leq\liminf_{n\rightarrow
+\infty}\mathbb{E}\varphi\left( X_{n}\right) ~.
\end{equation*}
\end{proposition}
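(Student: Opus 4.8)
The plan is to decompose $\varphi$ into a continuous summand and a nonnegative lower semicontinuous summand, treat each separately, and recombine. First I would set $\psi:=\varphi-\alpha$. Since $\alpha$ is continuous and $\varphi$ is lower semicontinuous, $\psi$ is lower semicontinuous, and assumption $(ii)$ gives $\psi\ge 0$ on $\mathbb{X}$; hence $\mathbb{E}\,\psi(X_{n})$ and $\mathbb{E}\,\psi(X)$ are well defined in $[0,+\infty]$. Once it is known that $\alpha(X_{n})$ and $\alpha(X)$ are integrable, the identities $\mathbb{E}\,\varphi(X_{n})=\mathbb{E}\,\psi(X_{n})+\mathbb{E}\,\alpha(X_{n})$ and $\mathbb{E}\,\varphi(X)=\mathbb{E}\,\psi(X)+\mathbb{E}\,\alpha(X)$ make sense in $(-\infty,+\infty]$ (no $\infty-\infty$ occurs), which already yields the existence of all the expectations in the statement and the bound $\mathbb{E}\,\varphi(X)\ge\mathbb{E}\,\alpha(X)>-\infty$. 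Thus it remains to prove
\[
\lim_{n\to\infty}\mathbb{E}\,\alpha(X_{n})=\mathbb{E}\,\alpha(X)\in\mathbb{R}
\qquad\text{and}\qquad
\liminf_{n\to\infty}\mathbb{E}\,\psi(X_{n})\ge\mathbb{E}\,\psi(X),
\]
and then add.

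For the continuous part I would argue by truncation. For $M>0$ put $\alpha_{M}:=(-M)\vee(\alpha\wedge M)$, a bounded continuous function, so $\mathbb{E}\,\alpha_{M}(X_{n})\to\mathbb{E}\,\alpha_{M}(X)$ by $(i)$ and the definition of convergence in law. Uniform integrability $(iii)$ gives $\sup_{n}\mathbb{E}\big[|\alpha(X_{n})|\,\mathbf{1}_{\{|\alpha(X_{n})|>M\}}\big]\to 0$ as $M\to\infty$; in particular $\sup_{n}\mathbb{E}\,|\alpha(X_{n})|<\infty$, and since $|\alpha|$ is nonnegative and lower semicontinuous the approximation argument of the next paragraph (applied to $|\alpha|$) gives $\mathbb{E}\,|\alpha(X)|\le\liminf_{n}\mathbb{E}\,|\alpha(X_{n})|<\infty$. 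Combining $|\mathbb{E}\,\alpha(X_{n})-\mathbb{E}\,\alpha_{M}(X_{n})|\le\mathbb{E}\big[|\alpha(X_{n})|\,\mathbf{1}_{\{|\alpha(X_{n})|>M\}}\big]$ with the analogous bound for $X$ and the convergence of the truncated expectations, a standard $3\varepsilon$ argument gives the first display.

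For the nonnegative lower semicontinuous part, the point is that on a metric space every nonnegative lower semicontinuous $\psi$ is the pointwise increasing limit of the bounded Lipschitz functions
\[
\psi_{k}(x):=\inf_{y\in\mathbb{X}}\big[(\psi(y)\wedge k)+k\,\rho(x,y)\big],
\qquad k\ge 1,
\]
where each $\psi_{k}$ is $k$-Lipschitz, satisfies $0\le\psi_{k}\le k$, and $\psi_{k}\uparrow\psi$ (monotonicity in $k$ is clear, and $\psi_{k}(x)\to\psi(x)$ follows from lower semicontinuity: for $\rho(x,y)\ge\delta$ the bracket is $\ge k\delta\to\infty$, while near $x$ it is $\ge\psi(x)-\varepsilon$). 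For each fixed $k$, $\mathbb{E}\,\psi_{k}(X)=\lim_{n}\mathbb{E}\,\psi_{k}(X_{n})\le\liminf_{n}\mathbb{E}\,\psi(X_{n})$ by weak convergence and $\psi_{k}\le\psi$, and letting $k\to\infty$ with monotone convergence yields $\mathbb{E}\,\psi(X)\le\liminf_{n}\mathbb{E}\,\psi(X_{n})$, the second display. Adding the two displays,
\[
\liminf_{n\to\infty}\mathbb{E}\,\varphi(X_{n})
\ge\liminf_{n\to\infty}\mathbb{E}\,\psi(X_{n})+\lim_{n\to\infty}\mathbb{E}\,\alpha(X_{n})
\ge\mathbb{E}\,\psi(X)+\mathbb{E}\,\alpha(X)=\mathbb{E}\,\varphi(X),
\]
which finishes the proof. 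The only genuinely delicate point is ensuring the two-term splitting of $\mathbb{E}\,\varphi$ is legitimate, i.e. that no $\infty-\infty$ arises: this is exactly why the integrability of $\alpha(X_{n})$ and $\alpha(X)$ secured in the second step must be established before additivity of the expectation is invoked. Everything else is the classical portmanteau machinery together with the Lipschitz approximation of lower semicontinuous functions; separability of $\mathbb{X}$ is used only so that these approximants behave well under convergence in law (alternatively one could invoke the Skorohod representation theorem in place of the truncation argument for the continuous part).
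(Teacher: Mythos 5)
Your proof is correct. Note first that the paper does not actually prove this proposition: it is stated as a known generalization of Fatou's lemma for weak convergence and the proof is delegated to Proposition 1.22 of Pardoux--R\u{a}\c{s}canu \cite{pa-ra/14}, where the natural argument (for which the separability hypothesis is tailored) runs through Skorokhod's representation theorem: realize $X_n\rightarrow X$ almost surely on a common probability space, use uniform integrability to pass to the limit in $\mathbb{E}\,\alpha(X_n)$, and apply Fatou to the nonnegative lower semicontinuous part. Your route reaches the same conclusion without any representation theorem: the decomposition $\varphi=\alpha+\psi$ with $\psi:=\varphi-\alpha\geq 0$ lower semicontinuous, the truncation plus uniform-integrability argument giving $\mathbb{E}\,\alpha(X_n)\rightarrow\mathbb{E}\,\alpha(X)$ with $\alpha(X)$ integrable, and the Lipschitz inf-convolution approximants $\psi_k\uparrow\psi$ combined with the portmanteau theorem and monotone convergence giving $\mathbb{E}\,\psi(X)\leq\liminf_n\mathbb{E}\,\psi(X_n)$; the final additivity step is legitimate exactly as you say, because the negative part of $\varphi(X_n)$ (resp.\ $\varphi(X)$) is dominated by the integrable $|\alpha(X_n)|$ (resp.\ $|\alpha(X)|$), so no $\infty-\infty$ occurs. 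All the individual steps check out (the forward reference to the $\psi$-argument when bounding $\mathbb{E}\,|\alpha(X)|$ is harmless, since that argument is self-contained), and your proof in fact never uses separability, so it is marginally more general than the cited route; what the Skorokhod-representation route buys instead is brevity, reducing everything to the classical Fatou lemma and Vitali convergence for a.s.\ convergent sequences.
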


For $0\leq s<t\leq T$, we denote by $\left\updownarrow X\right\updownarrow _{%
\left[ s,t\right] }$ (similar to (\ref{def total var})) the total variation
of $X_{\cdot}$ on $\left[ s,t\right] $, that is%
\begin{equation*}
\left\updownarrow X\right\updownarrow _{\left[ s,t\right] }\left(
\omega\right) =\sup\Big\{\sum \limits_{i=0}^{n-1}\left\vert
X_{t_{i+1}}\left( \omega\right) -X_{t_{i}}\left( \omega\right) \right\vert
:n\in\mathbb{N}^{\ast},s=t_{0}<t_{1}<\cdots<t_{n}=t\Big\}
\end{equation*}
We also use $\left\updownarrow X\right\updownarrow _{T}:=\left\updownarrow
X\right\updownarrow _{\left[ 0,T\right] }~.\smallskip$

Applying successively Proposition \ref{technical result 12} for%
\begin{align*}
\varphi \left( x\right) & =\mathrm{d}_{F}\left( x\left( t\right) \right) , \\
\varphi \left( x,y\right) & =\left( \sum_{i=0}^{N-1}\left\vert x\left(
t_{i+1}\right) -x\left( t_{i}\right) \right\vert -g\left( y\right) \right)
^{+},
\end{align*}%
where $s=t_{0}<t_{1}<\ldots <t_{N}=t$ is an arbitrary partition of $\left[
s.t\right] ,$ and respectively%
\begin{equation*}
\varphi \left( x\right) =\left( x\left( s\right) -x\left( t\right) \right)
^{+},
\end{equation*}%
we get the next result:

\begin{corollary}
\label{technical result 9}Let $s,t$ be arbitrary fixed such that $0\leq
s\leq t\leq T.$ If $g:C\left( \left[ 0,T\right] ;\mathbb{R}^{d}\right)
\rightarrow \mathbb{R}_{+}$ is a continuous function and $X$,$V$,$X^{n}$,$%
V^{n}$, $n\in \mathbb{N}^{\ast }$, are random variables with values in $%
C\left( \left[ 0,T\right] ;\mathbb{R}^{d}\right) $, such that%
\begin{equation*}
\left( X^{n},V^{n}\right)
%TCIMACRO{\TeXButton{\xrightarrow[]{law}}{\xrightarrow[]{law}}}%
%BeginExpansion
\xrightarrow[]{law}%
%EndExpansion
\left( X,V\right) ,\;\text{as }n\rightarrow \infty ,
\end{equation*}%
then the following implications hold true:

\begin{itemize}
\item[$\left( a\right) $] If $X_{t}^{n}\in F$ a.s., then $X_{t}\in F,$ a.s.,
whenever $F$ is closed subset of $\mathbb{R}^{d}$;

\item[$\left( b\right) $] If $\left\updownarrow X^{n}\right\updownarrow _{%
\left[ s,t\right] }\leq g\left( V^{n}\right) $ a.s., then $\left\updownarrow
X\right\updownarrow _{\left[ s,t\right] }\leq g\left( V\right) $, a.s.;

\item[$\left( c\right) $] If $d=1$ and $X_{s}^{n}\leq X_{t}^{n}$ a.s., then $%
X_{s}\leq X_{t}$, a.s.$\smallskip $
\end{itemize}
\end{corollary}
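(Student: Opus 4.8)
The plan is to invoke Proposition~\ref{technical result 12} for each item, always on a separable path space, always with the trivial continuous minorant $\alpha\equiv 0$ (so the family $\{\alpha(X^n,V^n)\}=\{0\}$ is automatically uniformly integrable), and always with a nonnegative \emph{continuous} functional $\varphi$. The common mechanism is: the hypothesis of each item says precisely that a certain nonnegative continuous functional of $(X^n,V^n)$ vanishes a.s., hence has zero expectation; since $(X^n,V^n)\xrightarrow[]{law}(X,V)$, Proposition~\ref{technical result 12} gives $0\le\mathbb{E}\,\varphi(X,V)\le\liminf_n\mathbb{E}\,\varphi(X^n,V^n)=0$, so $\varphi(X,V)=0$ a.s., which is exactly the asserted conclusion.

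Concretely, for $(a)$ I take $\varphi(x,y):=\mathrm{d}_F\big(x(t)\big)$ on $C([0,T];\mathbb{R}^d)\times C([0,T];\mathbb{R}^d)$, which is continuous since $x\mapsto x(t)$ is $\|\cdot\|_T$--continuous and $\mathrm{d}_F$ is $1$--Lipschitz; the hypothesis $X_t^n\in F$ a.s.\ gives $\varphi(X^n,V^n)=0$ a.s., hence $\mathrm{d}_F(X_t)=0$ a.s., i.e.\ $X_t\in F$ a.s.\ as $F$ is closed. For $(c)$ I take $\varphi(x,y):=\big(x(s)-x(t)\big)^{+}$ on $C([0,T];\mathbb{R})^2$; then $X_s^n\le X_t^n$ a.s.\ forces $\varphi(X^n,V^n)=0$ a.s., hence $X_s\le X_t$ a.s. For $(b)$ I first fix one partition $s=t_0<t_1<\cdots<t_N=t$ and use
\[
\varphi(x,y):=\Big(\textstyle\sum_{i=0}^{N-1}\big|x(t_{i+1})-x(t_i)\big|-g(y)\Big)^{+},
\]
which is continuous because finitely many evaluations and $g$ are continuous; the bound $\updownarrow X^n\updownarrow_{[s,t]}\le g(V^n)$ a.s.\ gives $\varphi(X^n,V^n)=0$ a.s., hence $\sum_{i=0}^{N-1}\big|X_{t_{i+1}}-X_{t_i}\big|\le g(V)$ a.s.\ for that particular partition.

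The one step that is not purely mechanical is finishing $(b)$: passing from a single partition to the supremum defining $\updownarrow X\updownarrow_{[s,t]}$. Here I would use that $X$ has continuous paths, so $\updownarrow X\updownarrow_{[s,t]}$ equals the supremum of the partition sums over the \emph{countable} family of partitions with endpoints in $\big(\mathbb{Q}\cap[s,t]\big)\cup\{s,t\}$; intersecting the corresponding countably many a.s.\ events then yields $\updownarrow X\updownarrow_{[s,t]}\le g(V)$ a.s. The rest is just the (immediate) check that the three functionals above are continuous for the uniform norm on path space, so that Proposition~\ref{technical result 12} applies; that is where the modest remaining work lies.
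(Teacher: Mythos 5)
Your proposal is correct and is essentially the paper's own argument: the paper applies Proposition \ref{technical result 12} with exactly the same three functionals, namely $\mathrm{d}_F\big(x(t)\big)$, $\big(\sum_{i}|x(t_{i+1})-x(t_i)|-g(y)\big)^{+}$ for an arbitrary partition of $[s,t]$, and $(x(s)-x(t))^{+}$. The details you add (the trivial minorant $\alpha\equiv 0$, the continuity checks, and the reduction to countably many rational-endpoint partitions to pass from a fixed partition sum to $\left\updownarrow X\right\updownarrow_{[s,t]}$ in $(b)$) merely make explicit what the paper leaves implicit.
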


Now let us consider the partition%
\begin{equation*}
\Delta_{N}:s=r_{0}<r_{1}<\ldots<r_{N}=t,\;\;r_{i+1}-r_{i}=\frac{t-s}{N}
\end{equation*}
and the function $g:C\left( \left[ 0,T\right] ;\mathbb{R}^{d}\right)
\rightarrow\left[ 0,1\right] $, defined by%
\begin{equation*}
\begin{array}{l}
g\left( x,k,v\right) :=\left( \sum_{i=0}^{N-1}\left\vert k\left(
r_{i+1}\right) -k\left( r_{i}\right) \right\vert -v\left( t\right) +v\left(
s\right) \right) ^{+}\wedge1\smallskip \\
+\left[ \displaystyle\int_{s}^{t}\varphi\left( x\left( r\right) \right)
dr-\sum_{i=0}^{N-1}\left\langle x\left( r_{i}\right) ,k\left( r_{i+1}\right)
-k\left( r_{i}\right) \right\rangle -\mathbf{m}_{x}\left( 1/N\right) \left(
v\left( t\right) -v\left( s\right) \right) \right] ^{+}\wedge1.%
\end{array}%
\end{equation*}

Applying again the generalization of the Fatou's Lemma (Proposition \ref%
{technical result 12}), it can be proved:

\begin{corollary}
\label{technical result 10} Let $\left( X,K,V\right) $, $\left(
X^{n},K^{n},V^{n}\right) $, $n\in\mathbb{N}$, be $C\left( \left[ 0,T\right] ;%
\mathbb{R}^{d}\right) ^{2}\times C\left( \left[ 0,T\right] ;\mathbb{R}%
\right) $--valued random variables, such that%
\begin{equation*}
\left( X^{n},K^{n},V^{n}\right)
%TCIMACRO{%
%\TeXButton{\xrightarrow[n\rightarrow\infty]{law}}{\xrightarrow
%[n\rightarrow\infty]{law}}}%
%BeginExpansion
\xrightarrow
[n\rightarrow\infty]{law}%
%EndExpansion
\left( X,K,V\right)
\end{equation*}
and for all $0\leq s<t,$ and $n\in\mathbb{N}^{\ast},$%
\begin{equation*}
\left\updownarrow K^{n}\right\updownarrow _{t}-\left\updownarrow
K^{n}\right\updownarrow _{s}\leq V_{t}^{n}-V_{s}^{n}\;\;a.s.~.
\end{equation*}
If $\varphi:\mathbb{R}^{d}\rightarrow(-\infty,+\infty]$ is a lower
semicontinuous function and%
\begin{equation*}
\int _{s}^{t}\varphi\left( X_{r}^{n}\right) dr\leq\int _{s}^{t}\left\langle
X_{r}^{n},dK_{r}^{n}\right\rangle \text{, a.s. for all }n\in\mathbb{N}%
^{\ast},
\end{equation*}
then%
\begin{equation*}
\left\updownarrow K\right\updownarrow _{t}-\left\updownarrow
K\right\updownarrow _{s}\leq V_{t}-V_{s}\text{, a.s.}
\end{equation*}
and%
\begin{equation*}
\int _{s}^{t}\varphi\left( X_{r}\right) dr\leq\int _{s}^{t}\left\langle
X_{r}~,dK_{r}\right\rangle \text{, a.s.}
\end{equation*}
\end{corollary}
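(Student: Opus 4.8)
The plan is to apply the generalized Fatou lemma, Proposition~\ref{technical result 12}, to the bounded lower semicontinuous functional $g$ defined immediately before the statement, on the separable metric space $\mathbb{X}:=C([0,T];\mathbb{R}^{d})^{2}\times C([0,T];\mathbb{R})$, using the given convergence in law $(X^{n},K^{n},V^{n})\xrightarrow{law}(X,K,V)$. First I would check that $g$ is lower semicontinuous and takes values in $[0,2]$. The first summand of $g$ is continuous, being the composition of the continuous maps $(x,k,v)\mapsto\sum_{i}|k(r_{i+1})-k(r_{i})|$ and $(x,k,v)\mapsto v(t)-v(s)$ with the continuous nondecreasing truncation $u\mapsto u^{+}\wedge1$. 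In the second summand the only ingredient that is not plainly continuous is $x\mapsto\int_{s}^{t}\varphi(x(r))\,dr$; this map is lower semicontinuous on $C([0,T];\mathbb{R}^{d})$, since if $x_{n}\to x$ uniformly then $x_{n}(r)\to x(r)$ for each $r$, hence $\liminf_{n}\varphi(x_{n}(r))\ge\varphi(x(r))$ by lower semicontinuity of $\varphi$, while $\varphi$ is bounded below on the compact set $\overline{\bigcup_{n}x_{n}([s,t])}\cup x([s,t])$ (it is l.s.c. and nowhere equal to $-\infty$), so Fatou's lemma yields $\liminf_{n}\int_{s}^{t}\varphi(x_{n}(r))\,dr\ge\int_{s}^{t}\varphi(x(r))\,dr$. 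Composing again with $u\mapsto u^{+}\wedge1$ preserves lower semicontinuity, so $g$ is l.s.c.

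Next I would verify that $g(X^{n},K^{n},V^{n})=0$ a.s. for every $n$. The first bracket is $\le0$ because, on $[s,t]$, $\sum_{i}|K^{n}(r_{i+1})-K^{n}(r_{i})|\le\,\updownarrow K^{n}\updownarrow_{t}-\updownarrow K^{n}\updownarrow_{s}\le V_{t}^{n}-V_{s}^{n}$ by the first hypothesis. For the second bracket, split
\[
\int_{s}^{t}\langle X_{r}^{n},dK_{r}^{n}\rangle=\sum_{i=0}^{N-1}\langle X^{n}(r_{i}),K^{n}(r_{i+1})-K^{n}(r_{i})\rangle+\sum_{i=0}^{N-1}\int_{r_{i}}^{r_{i+1}}\langle X_{r}^{n}-X^{n}(r_{i}),dK_{r}^{n}\rangle ,
\]
and bound the error term in absolute value by $\mathbf{m}_{X^{n}}(1/N)\bigl(\updownarrow K^{n}\updownarrow_{t}-\updownarrow K^{n}\updownarrow_{s}\bigr)\le\mathbf{m}_{X^{n}}(1/N)(V_{t}^{n}-V_{s}^{n})$; the modulus-of-continuity term in the definition of $g$ is chosen precisely to dominate this Riemann--Stieltjes error (one may assume $t-s\le1$, so that $r_{i+1}-r_{i}\le1/N$, splitting $[s,t]$ otherwise, since all quantities involved are additive over adjacent intervals). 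Together with the hypothesis $\int_{s}^{t}\varphi(X_{r}^{n})\,dr\le\int_{s}^{t}\langle X_{r}^{n},dK_{r}^{n}\rangle$, the second bracket is $\le0$ as well. Hence $\mathbb{E}\,g(X^{n},K^{n},V^{n})=0$ for all $n$, and Proposition~\ref{technical result 12} (applied with the continuous minorant $\alpha\equiv0$, the uniform integrability being automatic since $g$ is bounded) gives $\mathbb{E}\,g(X,K,V)\le\liminf_{n}\mathbb{E}\,g(X^{n},K^{n},V^{n})=0$, so $g(X,K,V)=0$ a.s.

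Finally, reading off $g(X,K,V)=0$ for the fixed partition $\Delta_{N}$ gives, a.s.,
\[
\sum_{i=0}^{N-1}|K(r_{i+1})-K(r_{i})|\le V_{t}-V_{s},
\]
and
\[
\int_{s}^{t}\varphi(X_{r})\,dr\le\sum_{i=0}^{N-1}\langle X(r_{i}),K(r_{i+1})-K(r_{i})\rangle+\mathbf{m}_{X}(1/N)(V_{t}-V_{s}).
\]
Since $N$ runs over the countable set $\mathbb{N}^{\ast}$, both inequalities hold simultaneously for every $N$ on a single event of full probability. Letting $N\to\infty$: the first one yields $\updownarrow K\updownarrow_{t}-\updownarrow K\updownarrow_{s}=\,\updownarrow K\updownarrow_{[s,t]}\le V_{t}-V_{s}$, because for a continuous function the total variation on $[s,t]$ is the limit of $S_{\Delta_{N}}(K)$ along the uniform partitions; in particular $K$ has bounded variation on $[s,t]$, so the Riemann--Stieltjes sums $\sum_{i}\langle X(r_{i}),K(r_{i+1})-K(r_{i})\rangle$ converge to $\int_{s}^{t}\langle X_{r},dK_{r}\rangle$, while $\mathbf{m}_{X}(1/N)\to0$ by continuity of $X$; passing to the limit in the second inequality gives $\int_{s}^{t}\varphi(X_{r})\,dr\le\int_{s}^{t}\langle X_{r},dK_{r}\rangle$ a.s. The one genuine obstacle is the lower semicontinuity of the truncated functional $g$, i.e.\ controlling $x\mapsto\int_{s}^{t}\varphi(x(r))\,dr$ under uniform convergence via Fatou; everything else is bookkeeping of the Riemann--Stieltjes approximation together with the countable passage to the limit.
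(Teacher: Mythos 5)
Your proposal is correct and follows essentially the same route as the paper: it applies the generalized Fatou Lemma (Proposition \ref{technical result 12}) to the bounded lower semicontinuous functional $g$ built from the uniform partitions $\Delta_N$, verifies $g(X^{n},K^{n},V^{n})=0$ a.s.\ via the Riemann--Stieltjes error estimate, and then lets $N\to\infty$ over the countable family of partitions. You also supply the details the paper leaves implicit (lower semicontinuity of $x\mapsto\int_{s}^{t}\varphi(x(r))\,dr$ under uniform convergence, and the harmless mismatch between the mesh $(t-s)/N$ and the modulus $\mathbf{m}_{x}(1/N)$, handled by additivity over subintervals), so nothing is missing.
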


\subsection{Complements on tightness}

If $\left\{ X_{t}^{n}:t\geq0\right\} $, $n\in\mathbb{N}^{\ast},$ is a family
of continuous stochastic processes then the following result is a
consequence of the Arzel\`{a}--Ascoli theorem (see, e.g., Theorem 7.3 in
\cite{bi/99}).

We recall the notations:%
\begin{equation*}
\begin{array}{l}
\left\Vert X^{n}\right\Vert _{T}:=\sup\left\{ \left\vert
X_{t}^{n}\right\vert :t\in\left[ 0,T\right] \right\} ,\smallskip \\
\mathbf{m}_{X^{n}}\left( \varepsilon;\left[ 0,T\right] \right) :=\sup\left\{
\left\vert X_{t}^{n}-X_{s}^{n}\right\vert :t,s\in\left[ 0,T\right]
,\;\left\vert t-s\right\vert \leq\varepsilon\right\} .%
\end{array}%
\end{equation*}

\begin{theorem}
\label{technical result 3}The family $\left\{ X^{n}:n\in\mathbb{N}^{\ast
}\right\} $\textit{\ is tight in} $C(\mathbb{R}_{+};\mathbb{R}^{d})$ \textit{%
if and only if, for every }$T\geq0,$%
\begin{equation*}
\begin{array}{rl}
\left( j\right) & \lim\limits_{N\nearrow\infty}\left[ \sup\limits_{n\geq 1}%
\mathbb{P}^{\left( n\right) }\left( \left\vert X_{0}^{n}\right\vert \geq
N\right) \right] =0,\smallskip \\
\left( jj\right) & \lim\limits_{\varepsilon\searrow0}\left[ \sup
\limits_{n\geq1}\mathbb{P}^{\left( n\right) }\left( \mathbf{m}_{X^{n}}\left(
\varepsilon;\left[ 0,T\right] \right) \geq a\right) \right] =0,\quad\forall
a>0.%
\end{array}%
\end{equation*}
\textit{Moreover, tightness yields that for all} $T>0$%
\begin{equation*}
\lim\limits_{N\nearrow\infty}\left[ \sup\limits_{n\geq1}\mathbb{P}^{\left(
n\right) }\left( \left\Vert X^{n}\right\Vert _{T}\geq N\right) \right] =0.
\end{equation*}
\end{theorem}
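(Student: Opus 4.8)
The plan is to read the statement directly off the Arzel\`{a}--Ascoli characterization of relatively compact subsets of $C(\mathbb{R}_{+};\mathbb{R}^{d})$, combined with the fact that on a Polish space tightness is precisely inner regularity with respect to compact sets. Concretely, I would first recall that a set $A\subset C(\mathbb{R}_{+};\mathbb{R}^{d})$ is relatively compact if and only if $\sup_{x\in A}|x(0)|<\infty$ and $\sup_{x\in A}\mathbf{m}_{x}(\varepsilon;[0,T])\to 0$ as $\varepsilon\searrow 0$ for every $T>0$; note also that such an $A$ is automatically uniformly bounded on each $[0,T]$, since $\|x\|_{T}\le |x(0)|+\lceil T/\varepsilon\rceil\,\mathbf{m}_{x}(\varepsilon;[0,T])$ for any admissible $\varepsilon$.

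For necessity, I would fix $\eta>0$ and choose a compact $\mathcal{K}\subset C(\mathbb{R}_{+};\mathbb{R}^{d})$ with $\sup_{n}\mathbb{P}^{(n)}(X^{n}\notin\mathcal{K})<\eta$. Writing $M:=\sup_{x\in\mathcal{K}}|x(0)|<\infty$, $M_{T}:=\sup_{x\in\mathcal{K}}\|x\|_{T}<\infty$, and picking $\varepsilon_{0}>0$ with $\sup_{x\in\mathcal{K}}\mathbf{m}_{x}(\varepsilon_{0};[0,T])<a$, the three inclusions $\{|X_{0}^{n}|\ge N\}\subset\{X^{n}\notin\mathcal{K}\}$ for $N>M$, $\{\mathbf{m}_{X^{n}}(\varepsilon;[0,T])\ge a\}\subset\{X^{n}\notin\mathcal{K}\}$ for $\varepsilon\le\varepsilon_{0}$, and $\{\|X^{n}\|_{T}\ge N\}\subset\{X^{n}\notin\mathcal{K}\}$ for $N>M_{T}$ give the required probability bounds; letting $\eta\searrow 0$ yields $(j)$, $(jj)$ and the last displayed limit.

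For sufficiency I would build the compact set by hand: using $(j)$, choose $N_{0}$ with $\sup_{n}\mathbb{P}^{(n)}(|X_{0}^{n}|\ge N_{0})<\eta/2$, and for each pair of integers $T,j\ge 1$ use $(jj)$ on $[0,T]$ with $a=1/j$ to choose $\varepsilon_{T,j}>0$ with $\sup_{n}\mathbb{P}^{(n)}(\mathbf{m}_{X^{n}}(\varepsilon_{T,j};[0,T])\ge 1/j)<\eta\,2^{-(T+j+1)}$. Then $\mathcal{K}:=\{x\in C(\mathbb{R}_{+};\mathbb{R}^{d}):|x(0)|\le N_{0}\text{ and }\mathbf{m}_{x}(\varepsilon_{T,j};[0,T])\le 1/j\ \forall\,T,j\ge 1\}$ is closed (each defining condition is stable under uniform convergence on compacts) and, by Arzel\`{a}--Ascoli, relatively compact, hence compact; a union bound gives $\sup_{n}\mathbb{P}^{(n)}(X^{n}\notin\mathcal{K})\le \eta/2+\sum_{T\ge 1}\sum_{j\ge 1}\eta\,2^{-(T+j+1)}\le\eta$, so the family is tight.

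I do not expect a genuine obstacle: the argument is essentially bookkeeping. The only points that need a little care are to work along a countable exhaustion $T\in\mathbb{N}^{\ast}$ of $\mathbb{R}_{+}$ together with a countable family of thresholds $1/j$, so that the set $\mathcal{K}$, defined by a countable intersection, is simultaneously closed and relatively compact (Arzel\`{a}--Ascoli being applied on each fixed $[0,T]$, using monotonicity of $\varepsilon\mapsto\mathbf{m}_{x}(\varepsilon;[0,T])$ and of $T\mapsto\mathbf{m}_{x}(\varepsilon;[0,T])$), and to recall that a compact subset of $C(\mathbb{R}_{+};\mathbb{R}^{d})$ is automatically uniformly bounded on every $[0,T]$ — which is exactly what produces the final displayed limit once tightness has been established.
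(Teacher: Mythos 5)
Your proof is correct. Note, however, that the paper does not prove this theorem at all: it simply states it as ``a consequence of the Arzel\`a--Ascoli theorem'' and cites Theorem 7.3 of Billingsley \cite{bi/99}, so there is no in-paper argument to compare against. What you have written is essentially the standard proof of that cited result, transported from $C([0,1])$ to $C(\mathbb{R}_{+};\mathbb{R}^{d})$ with its local uniform topology: necessity by reading $(j)$, $(jj)$ and the bound on $\left\Vert X^{n}\right\Vert _{T}$ off a compact set furnished by tightness (using that a compact set has bounded evaluations, bounded sup-norms on each $[0,T]$, and a uniform modulus of continuity), and sufficiency by assembling a compact set from a countable family of conditions indexed by integer horizons $T$ and thresholds $1/j$, closed because $x\mapsto\mathbf{m}_{x}(\varepsilon;[0,T])$ is continuous, relatively compact by Arzel\`a--Ascoli on each $[0,T]$ (your inequality $\left\Vert x\right\Vert _{T}\leq\left\vert x(0)\right\vert+\lceil T/\varepsilon\rceil\,\mathbf{m}_{x}(\varepsilon;[0,T])$ supplying uniform boundedness), and small in probability by a union bound with summable weights $\eta\,2^{-(T+j+1)}$. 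Your construction in the sufficiency step is, incidentally, the same device the paper does spell out in its proof of Proposition \ref{technical result 4}, so your argument fits seamlessly with what the authors intended the reader to supply.
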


Without using the above theorem, it can be proved the following criterion
for tightness which is well adapted to our needs. The proof can be found in
E. Pardoux and A. R\u{a}\c{s}canu \cite{pa-ra/14} (Proposition 1.53) and we
will give the sketch of the proof.

\begin{proposition}
\label{technical result 4}Let $\left\{ X_{t}^{n}:t\geq0\right\} $, $n\in%
\mathbb{N}^{\ast}$, be a family of $\mathbb{R}^{d}$--valued continuous
stochastic processes defined on probability space $\left( \Omega ,\mathcal{F}%
,\mathbb{P}\right) .$ Suppose that for every $T\geq0$, there exist $\alpha>0$
and $b\in C\left( \mathbb{R}_{+}\right) $ with $b(0)=0$, such that%
\begin{equation*}
\begin{array}{rl}
\left( j\right) & \lim\limits_{N\rightarrow\infty}\left[ \sup \limits_{n\in%
\mathbb{N}^{\ast}}\mathbb{P}(\{\left\vert X_{0}^{n}\right\vert \geq N\})%
\right] =0,\smallskip \\
\left( jj\right) & \mathbb{E}\left[ 1\wedge\sup\limits_{0\leq
s\leq\varepsilon}\left\vert X_{t+s}^{n}-X_{t}^{n}\right\vert ^{\alpha}\right]
\leq\varepsilon\cdot b(\varepsilon),\;\forall\varepsilon>0\text{, }%
n\geq1,\;t\in\left[ 0,T\right] .%
\end{array}%
\end{equation*}
Then the family $\left\{ X^{n}:n\in\mathbb{N}^{\ast}\right\} $ is tight in $%
C(\mathbb{R}_{+};\mathbb{R}^{d}).$
\end{proposition}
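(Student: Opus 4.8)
The plan is to reduce the statement to the Arzel\`{a}--Ascoli type criterion of Theorem \ref{technical result 3}. Fixing $T>0$, hypothesis $(j)$ of the Proposition is exactly condition $(j)$ of that theorem, so the only thing left to prove is that, for every $a>0$,
\begin{equation*}
\lim_{\varepsilon\searrow 0}\Big[\,\sup_{n\geq 1}\mathbb{P}\big(\mathbf{m}_{X^n}(\varepsilon;[0,T])\geq a\big)\Big]=0 .
\end{equation*}

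To get this I would first control the modulus of continuity on $[0,T]$ by a finite maximum of forward oscillations anchored at a grid, and then estimate that maximum by a union bound. Given $\varepsilon\in(0,T]$, set $N_\varepsilon:=\lceil T/\varepsilon\rceil$, take grid points $t_i:=(i\varepsilon)\wedge T$ for $0\leq i\leq N_\varepsilon$ (so that all anchors stay in $[0,T]$ and consecutive gaps are $\leq\varepsilon$), and put $Z_i^n:=\sup_{0\leq u\leq\varepsilon}|X^n_{t_i+u}-X^n_{t_i}|$. For any $s,t\in[0,T]$ with $|t-s|\leq\varepsilon$ the points $s$ and $t$ lie in at most two consecutive grid intervals, and a short triangle inequality through the intermediate grid point(s) gives $|X^n_t-X^n_s|\leq 3\max_{0\leq i\leq N_\varepsilon}Z_i^n$, hence $\mathbf{m}_{X^n}(\varepsilon;[0,T])\leq 3\max_i Z_i^n$. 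Since $x\mapsto 1\wedge x^\alpha$ is nondecreasing on $\mathbb{R}_{+}$ and each $t_i\in[0,T]$, hypothesis $(jj)$ and Markov's inequality yield
\begin{equation*}
\mathbb{P}\big(Z_i^n\geq a/3\big)\leq\frac{\mathbb{E}\big[1\wedge(Z_i^n)^\alpha\big]}{1\wedge(a/3)^\alpha}\leq\frac{\varepsilon\,b(\varepsilon)}{1\wedge(a/3)^\alpha},
\end{equation*}
so summing over the $N_\varepsilon+1\leq T/\varepsilon+2$ indices,
\begin{equation*}
\sup_{n\geq 1}\mathbb{P}\big(\mathbf{m}_{X^n}(\varepsilon;[0,T])\geq a\big)\leq (N_\varepsilon+1)\,\frac{\varepsilon\,b(\varepsilon)}{1\wedge(a/3)^\alpha}\leq\frac{(T+2\varepsilon)\,b(\varepsilon)}{1\wedge(a/3)^\alpha},
\end{equation*}
which tends to $0$ as $\varepsilon\searrow 0$, because $b$ is continuous with $b(0)=0$. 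Together with $(j)$, Theorem \ref{technical result 3} then gives tightness of $\{X^n:n\in\mathbb{N}^{\ast}\}$ in $C(\mathbb{R}_{+};\mathbb{R}^d)$.

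The step I expect to require the most care is the first reduction: $(jj)$ controls only the forward oscillation started at an arbitrary $t\in[0,T]$, so one must make sure every pair $s<t$ with $t-s\leq\varepsilon$ is bridged by a bounded number of such oscillations whose anchors all remain in $[0,T]$ — this is precisely why the grid is taken to be $t_i=(i\varepsilon)\wedge T$ rather than $i\varepsilon$, so that no anchor escapes $[0,T]$ where $(jj)$ is available. The reason the estimate ultimately works is structural: the extra factor $\varepsilon$ on the right-hand side of $(jj)$ exactly absorbs the number $\sim T/\varepsilon$ of grid intervals created by the union bound, leaving only the vanishing factor $b(\varepsilon)$; without that $\varepsilon$ the argument would break down.
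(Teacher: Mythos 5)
Your proof is correct, but it takes a different route from the paper's. You verify the two conditions of the tightness criterion of Theorem \ref{technical result 3} (the Billingsley/Arzel\`a--Ascoli characterization): condition $(j)$ is the hypothesis itself, and for the modulus condition you work one scale $\varepsilon$ at a time, bridging any pair $s,t$ with $|t-s|\leq\varepsilon$ through the grid $t_i=(i\varepsilon)\wedge T$ to get $\mathbf{m}_{X^n}(\varepsilon;[0,T])\leq 3\max_i Z_i^n$, and then applying Chebyshev's inequality to $1\wedge (Z_i^n)^\alpha$ plus a union bound over the $\sim T/\varepsilon$ anchors, the factor $\varepsilon$ in $(jj)$ cancelling the anchor count; all steps check out (the anchors stay in $[0,T]$, and $1\wedge x^\alpha$ being nondecreasing makes the Markov step legitimate even when $a/3>1$). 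The paper deliberately avoids Theorem \ref{technical result 3} (it states the Proposition is proved ``without using the above theorem'') and instead builds, for each tolerance $\varepsilon$, an explicit compact set $\mathcal{K}_\varepsilon\subset C([0,T];\mathbb{R}^d)$ by imposing oscillation bounds $\gamma_k=2^{-(k-1)/\alpha}$ over windows $\varepsilon_k$ chosen so that $b(\varepsilon_k)\leq\varepsilon/(4^kT)$, simultaneously for all $k$, and then bounds $\mathbb{P}(X^n\notin\mathcal{K}_\varepsilon)$ by the same Markov-plus-union-bound computation summed over $k$. So the core estimate is identical in both arguments; what differs is the packaging: your single-scale argument is shorter and cleaner once the criterion of Theorem \ref{technical result 3} is taken as given, while the paper's multi-scale construction is self-contained and exhibits the compact sets explicitly (which is essentially a direct proof of that criterion in this setting). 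Both yield tightness on each $C([0,T])$ and hence on $C(\mathbb{R}_+;\mathbb{R}^d)$.
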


\begin{proof}
We fix $\varepsilon,T>0$. From $\left( j\right) $, there exists $%
M=M_{\varepsilon}\geq1$ such that
\begin{equation*}
\sup\limits_{n\in\mathbb{N}^{\ast}}\mathbb{P}(\{\left\vert
X_{0}^{n}\right\vert \geq M\})<\frac{\varepsilon}{2}.
\end{equation*}
Let $\gamma_{k}=\dfrac{1}{2^{\left( k-1\right) /\alpha}}$ and $\varepsilon
_{k}\searrow0$ be such that $b(\varepsilon_{k})\leq\dfrac{\varepsilon}{%
4^{k}~T}.$ Let $N_{k}=\left[ \dfrac{T}{\varepsilon_{k}}\right] $ and $t_{i}=%
\dfrac{(i-1)T}{N_{k}}.$

Applying Theorem Arzel\'{a}--Ascoli we see that the set%
\begin{equation*}
\begin{array}{l}
\mathcal{K}_{\varepsilon}=\Big\{z\in C([0,T];\mathbb{R}^{d}):\left\vert
z\left( 0\right) \right\vert \leq M,\smallskip \\
\quad\quad\quad\quad\quad\quad\quad\quad\quad\sup\limits_{1\leq i\leq
N_{k}}\sup\limits_{0<s\leq\varepsilon_{k}}\left\vert z\left( t_{i}+s\right)
-z\left( t_{i}\right) \right\vert \leq\gamma_{k},\forall k\in\mathbb{N}%
^{\ast}\Big\}%
\end{array}%
\end{equation*}
is compact in $C([0,T];\mathbb{R}^{d}).$

From Markov's inequality and $\left( jj\right) $
\begin{align*}
\mathbb{P}\left( X^{n}\notin\mathcal{K}_{\varepsilon}\right) & \leq\mathbb{P}%
(\{\left\vert X_{0}^{n}\right\vert >M\})+\sum\limits_{k\in \mathbb{N}%
^{\ast}}\sum\limits_{i=1}^{N_{k}}\mathbb{P}(\{\sup\limits_{0\leq
s\leq\varepsilon_{k}}\left\vert X_{t_{i}+s}^{n}-X_{t_{i}}^{n}\right\vert
>\gamma_{k}\}) \\
& <\frac{\varepsilon}{2}+\sum\limits_{k\in\mathbb{N}^{\ast}}\sum
\limits_{i=1}^{N_{k}}\dfrac{\varepsilon_{k}\times b(\varepsilon_{k})}{%
\gamma_{k}^{\alpha}}=\frac{\varepsilon}{2}+\sum\limits_{k\in\mathbb{N}^{\ast
}}\dfrac{N_{k}\times\varepsilon_{k}\times b(\varepsilon_{k})}{\gamma
_{k}^{\alpha}}\leq\varepsilon.
\end{align*}
The proof is complete now.\hfill$\smallskip$
\end{proof}

\begin{proposition}
\label{technical result 7}Let $g:\mathbb{R}_{+}\rightarrow\mathbb{R}_{+}$ be
a continuous function satisfying $g\left( 0\right) =0$ and $G:C\left(
\mathbb{R}_{+};\mathbb{R}^{d}\right) \rightarrow\mathbb{R}_{+}$ be a mapping
which is bounded on compact subsets of $C\left( \mathbb{R}_{+};\mathbb{R}%
^{d}\right) .$ Let $X^{n},Y^{n}$, $n\in\mathbb{N}^{\ast}$, be random
variables with values in $C\left( \mathbb{R}_{+};\mathbb{R}^{d}\right) .$ If
$\left\{ Y^{n}:n\in\mathbb{N}^{\ast}\right\} $ is tight and for all $n\in%
\mathbb{N}^{\ast}$%
\begin{equation*}
\begin{array}{rl}
\left( i\right) & \left\vert X_{0}^{n}\right\vert \leq G\left( Y^{n}\right)
,\;a.s.\smallskip \\
\left( ii\right) & \mathbf{m}_{X^{n}}\left( \varepsilon;\left[ 0,T\right]
\right) \leq G\left( Y^{n}\right) g\left( \mathbf{m}_{Y^{n}}\left(
\varepsilon;\left[ 0,T\right] \right) \right) ,\;a.s.,\;\;\forall
~\varepsilon,T>0,%
\end{array}%
\end{equation*}
then $\left\{ X^{n}:n\in\mathbb{N}^{\ast}\right\} $ is tight.
\end{proposition}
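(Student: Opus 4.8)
The plan is to use the tightness criterion of Theorem \ref{technical result 3}: it suffices to verify the two conditions $\left(j\right)$ and $\left(jj\right)$ for the family $\left\{X^{n}\right\}$, using the hypotheses on $Y^{n}$ together with the boundedness of $G$ on compact sets. First I would exploit the fact that $\left\{Y^{n}\right\}$ is tight: given $\eta>0$ and $T>0$, by Prohorov's theorem (or directly by definition of tightness in $C(\mathbb{R}_{+};\mathbb{R}^{d})$) there is a compact set $\mathcal{K}=\mathcal{K}_{\eta,T}\subset C\left(\mathbb{R}_{+};\mathbb{R}^{d}\right)$ such that $\mathbb{P}\left(Y^{n}\notin\mathcal{K}\right)<\eta$ for all $n$. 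Set $R=R_{\eta,T}:=\sup_{y\in\mathcal{K}}G\left(y\right)<\infty$, which is finite precisely because $G$ is bounded on compact subsets. On the event $\left\{Y^{n}\in\mathcal{K}\right\}$ we then have $\left|X_{0}^{n}\right|\le G\left(Y^{n}\right)\le R$ from $\left(i\right)$, and $\mathbf{m}_{X^{n}}\left(\varepsilon;\left[0,T\right]\right)\le R\,g\left(\mathbf{m}_{Y^{n}}\left(\varepsilon;\left[0,T\right]\right)\right)$ from $\left(ii\right)$.

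For condition $\left(j\right)$ of Theorem \ref{technical result 3}, pick $N\ge R_{\eta,T}$; then $\mathbb{P}\left(\left|X_{0}^{n}\right|\ge N\right)\le\mathbb{P}\left(Y^{n}\notin\mathcal{K}\right)<\eta$ uniformly in $n$, so $\sup_{n}\mathbb{P}\left(\left|X_{0}^{n}\right|\ge N\right)\to0$ as $N\to\infty$. For condition $\left(jj\right)$, fix $a>0$ and $T>0$ and work with a compact $\mathcal{K}$ as above corresponding to some small $\eta$. Since $\left\{Y^{n}\right\}$ is tight, the Arzel\`{a}--Ascoli characterization of compactness (equivalently, tightness via Theorem \ref{technical result 3} applied to $Y^{n}$) gives that $\mathbf{m}_{Y^{n}}\left(\varepsilon;\left[0,T\right]\right)\to0$ in probability uniformly in $n$; more precisely $\lim_{\varepsilon\searrow0}\sup_{n}\mathbb{P}\left(\mathbf{m}_{Y^{n}}\left(\varepsilon;\left[0,T\right]\right)\ge\beta\right)=0$ for every $\beta>0$. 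Using continuity of $g$ at $0$, choose $\beta>0$ with $R\,g\left(s\right)<a$ for $0\le s\le\beta$. Then on $\left\{Y^{n}\in\mathcal{K}\right\}\cap\left\{\mathbf{m}_{Y^{n}}\left(\varepsilon;\left[0,T\right]\right)<\beta\right\}$ we get $\mathbf{m}_{X^{n}}\left(\varepsilon;\left[0,T\right]\right)\le R\,g\left(\mathbf{m}_{Y^{n}}\left(\varepsilon;\left[0,T\right]\right)\right)<a$, whence
\begin{equation*}
\sup_{n}\mathbb{P}\left(\mathbf{m}_{X^{n}}\left(\varepsilon;\left[0,T\right]\right)\ge a\right)\le\eta+\sup_{n}\mathbb{P}\left(\mathbf{m}_{Y^{n}}\left(\varepsilon;\left[0,T\right]\right)\ge\beta\right).
\end{equation*}
Letting first $\varepsilon\searrow0$ and then $\eta\searrow0$ shows the left-hand side tends to $0$, which is condition $\left(jj\right)$.

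Having verified $\left(j\right)$ and $\left(jj\right)$, Theorem \ref{technical result 3} yields that $\left\{X^{n}:n\in\mathbb{N}^{\ast}\right\}$ is tight in $C\left(\mathbb{R}_{+};\mathbb{R}^{d}\right)$, completing the proof. The only mildly delicate point is the bookkeeping with the two small parameters: one must introduce the compact set $\mathcal{K}$ (controlling $G$) before choosing $\beta$ (controlling $g$ near $0$), and then send $\varepsilon$ and $\eta$ to zero in the right order. Everything else is a direct application of the already-established tightness criterion and the hypotheses; no genuinely hard estimate is involved, and the argument does not even need the quantitative bound $(jj)$ in the pointwise-$\varepsilon\cdot b(\varepsilon)$ form of Proposition \ref{technical result 4}, only the qualitative uniform smallness of the modulus of continuity of $Y^{n}$.
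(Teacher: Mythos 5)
Your proof is correct and follows essentially the same route as the paper: extract a compact set $\mathcal{K}$ from the tightness of $\{Y^{n}\}$, use boundedness of $G$ on $\mathcal{K}$ to verify condition $(j)$ of Theorem \ref{technical result 3}, and combine $g(0)=0$ with the uniform smallness of $\mathbf{m}_{Y^{n}}(\varepsilon;[0,T])$ to verify $(jj)$. The only cosmetic difference is that the paper exploits the equicontinuity of the single compact $\mathcal{K}$ (so the bad event other than $\{Y^{n}\notin\mathcal{K}\}$ is empty for small $\varepsilon$), while you invoke the probabilistic modulus condition for $Y^{n}$ and carry an extra vanishing probability term; both arguments are valid.
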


\begin{proof}
Let $\delta>0$ be arbitrary. Then there exists a compact set $K_{\delta
}\subset C\left( \left[ 0,\infty\right[ ;\mathbb{R}^{d}\right) $ such that
for all $n\in\mathbb{N}^{\ast}$%
\begin{equation*}
\mathbb{P}\left( Y^{n}\notin K_{\delta}\right) <\delta~.
\end{equation*}
Define $N_{\delta}=\sup\limits_{x\in K_{\delta}}G\left( x\right) .$ Then%
\begin{equation*}
\mathbb{P}\left( \left\vert X_{0}^{n}\right\vert >N_{\delta}\right) <\delta.
\end{equation*}
Let $a>0$ be arbitrary. There exists $\varepsilon_{0}>0$ such that
\begin{equation*}
\sup_{x\in K_{\delta}}\left[ g\left( \mathbf{m}_{x}\left( \varepsilon ;\left[
0,T\right] \right) \right) \right] <\frac{a}{N_{\delta}},\;\;\forall~0<%
\varepsilon<\varepsilon_{0}~.
\end{equation*}
Consequently for all $n\in\mathbb{N}^{\ast},$%
\begin{equation*}
\begin{array}{l}
\mathbb{P}\left( \mathbf{m}_{X^{n}}\left( \varepsilon;\left[ 0,T\right]
\right) \geq a\right) \leq\mathbb{P}\left[ g\left( \mathbf{m}_{Y^{n}}\left(
\varepsilon;\left[ 0,T\right] \right) \right) \geq\frac {a}{N_{\delta}},\
Y^{n}\in K_{\delta}\right] \smallskip \\
\quad\quad\quad\quad\quad\quad\quad\quad\quad\quad\quad+\mathbb{P}\left(
Y^{n}\notin K_{\delta}\right) \leq\delta%
\end{array}%
\end{equation*}
and the result follows.\hfill
\end{proof}

\subsection{It\^{o}'s stochastic integral}

In this subsection we consider $\left\{ B_{t}:t\geq0\right\} $ to be a $k$%
--dimensional Brownian motion on a stochastic basis (which is supposed to be
complete and right--continuous) $\left( \Omega,\mathcal{F},\mathbb{P},\{%
\mathcal{F}_{t}\}_{t\geq0}\right) $.

Let $S_{d}\left[ 0,T\right] $ be the space of p.m.c.s.p. $X:\Omega \times%
\left[ 0,T\right] \rightarrow\mathbb{R}^{d}$ and $\Lambda_{d}\left(
0,T\right) $ the space of p.m.c.s.p. $X:\Omega\times\left[ 0,T\right]
\rightarrow\mathbb{R}^{d}$ such that%
\begin{equation*}
\int_{0}^{T}\left\vert X_{t}\right\vert ^{2}dt<\infty,\;\mathbb{P}\text{%
--a.s.}
\end{equation*}
Write $S_{d}$ (and $\Lambda_{d}$) for space of p.m.c.s.p. $X:\Omega \times%
\left[ 0,T\right] \rightarrow\mathbb{R}^{d}$ such that the restriction of $X$
to $\left[ 0,T\right] $ belongs to $S_{d}$ (respectively to $\Lambda_{d}$).

If $X\in S_{d\times k}$ and $B$ is an $\mathbb{R}^{k}$--Brownian motion,
then the stochastic process $\left\{ \left( X_{t},B_{t}\right)
:t\geq0\right\} $ can be see as a random variable with values in the space $%
C(\mathbb{R}_{+},\mathbb{R}^{d\times k})\times C(\mathbb{R}_{+},\mathbb{R}%
^{k})$. The law of this random variable will be denoted $\mathcal{L}\left(
X,B\right) .$

\begin{proposition}[Corollary 2.13 in \protect\cite{pa-ra/14}]
\label{technical result 11}Let $X,\hat{X}\in S_{d}\left[ 0,T\right] $, $B,%
\hat{B}$ be two $\mathbb{R}^{k}$--Brownian motions and $g:\mathbb{R}%
_{+}\times \mathbb{R}^{d}\rightarrow \mathbb{R}^{d\times k}$ be a function
such that%
\begin{equation*}
\begin{array}{l}
g\left( \cdot ,y\right) \text{ is measurable }\forall ~y\in \mathbb{R}%
^{d},\smallskip \\
y\mapsto g\left( t,y\right) \text{ is continuous }dt-a.e.%
\end{array}%
\end{equation*}%
If%
\begin{equation*}
\mathcal{L}\left( X,B\right) =\mathcal{L}(\hat{X},\hat{B})\;\text{on }C(%
\mathbb{R}_{+},\mathbb{R}^{d+k}),
\end{equation*}%
then%
\begin{equation*}
\mathcal{L}\big(X,B,\int_{0}^{\cdot }g\left( s,X_{s}\right) dB_{s}\big)=%
\mathcal{L}\big(\hat{X},\hat{B},\int_{0}^{\cdot }g(s,\hat{X}_{s})d\hat{B}_{s}%
\big)\;\text{on }C(\mathbb{R}_{+},\mathbb{R}^{d+k+d}).
\end{equation*}
\end{proposition}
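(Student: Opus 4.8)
The plan is to exploit the fact that the It\^{o} integral $t\mapsto\int_{0}^{t}g(s,X_{s})\,dB_{s}$ is, modulo a $\mathbb{P}$--null set, a limit in probability of \emph{continuous} functionals of the pair of trajectories $(X,B)$. Since the hypothesis is precisely that $(X,B)$ and $(\hat{X},\hat{B})$ induce the same law on $C(\mathbb{R}_{+};\mathbb{R}^{d})\times C(\mathbb{R}_{+};\mathbb{R}^{k})$, any continuous functional of the path has the same distribution whether evaluated along $(X,B)$ or along $(\hat{X},\hat{B})$; and because convergence in probability implies convergence in law, these common laws are preserved when passing to the limit that defines the stochastic integral. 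I would reduce throughout to a fixed interval $[0,T]$, since a law on $C(\mathbb{R}_{+};\cdot)$ is determined by its restrictions to the intervals $[0,T]$.

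First I would treat the case of bounded continuous $g$. Given a partition $\pi:0=t_{0}<\cdots<t_{N}=T$, consider the elementary integral
\begin{equation*}
I^{\pi}_{t}(x,b):=\sum_{i=0}^{N-1}g\big(t_{i},x(t_{i})\big)\big(b(t_{i+1}\wedge t)-b(t_{i}\wedge t)\big),
\end{equation*}
which, as a map $(x,b)\mapsto I^{\pi}_{\cdot}(x,b)$ from $C([0,T];\mathbb{R}^{d+k})$ into $C([0,T];\mathbb{R}^{d})$, is continuous (a finite combination of evaluation maps and the continuous function $g$). Hence $\mathcal{L}\big(X,B,I^{\pi}(X,B)\big)=\mathcal{L}\big(\hat{X},\hat{B},I^{\pi}(\hat{X},\hat{B})\big)$. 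Letting the mesh of $\pi$ shrink to zero along a sequence, the continuity of $g$ and of the sample paths of $X$ together with the classical approximation theory of the It\^{o} integral give that $I^{\pi}_{\cdot}(X,B)$ converges to $\int_{0}^{\cdot}g(s,X_{s})\,dB_{s}$ in probability, uniformly on $[0,T]$, and likewise on the hatted side. Since the triples $(X,B,I^{\pi}(X,B))$ and $(\hat{X},\hat{B},I^{\pi}(\hat{X},\hat{B}))$ have the same law for every $\pi$ in the sequence and both converge in probability, their limits have the same law, which is the conclusion for bounded continuous $g$.

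Next I would remove the boundedness and the continuity in $t$. Because $X$ is continuous and the integral is assumed to exist, $\int_{0}^{T}|g(s,X_{s})|^{2}\,ds<\infty$ a.s.; I would localize by the times $\tau^{X}_{R}:=\inf\{t\in[0,T]:\int_{0}^{t}|g(s,X_{s})|^{2}\,ds\ge R\}\wedge T$, noting that $x\mapsto\tau_{R}(x)$ is a Borel functional of the path, so that the equality of laws of $(X,B)$ and $(\hat{X},\hat{B})$ also matches $\tau^{X}_{R}$ with $\tau^{\hat{X}}_{R}$, and $\tau^{X}_{R}\uparrow T$ a.s.\ as $R\to\infty$. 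Approximating $g$ by bounded continuous functions $g_{n}$ with $g_{n}(t,\cdot)\to g(t,\cdot)$ a.e.\ in $t$ and $|g_{n}|\le|g|+1$ (a standard approximation of a Carath\'{e}odory function), dominated convergence yields $\int_{0}^{\cdot\wedge\tau^{X}_{R}}|g_{n}(s,X_{s})-g(s,X_{s})|^{2}\,ds\to0$ a.s., whence $\int_{0}^{\cdot\wedge\tau^{X}_{R}}g_{n}(s,X_{s})\,dB_{s}\to\int_{0}^{\cdot\wedge\tau^{X}_{R}}g(s,X_{s})\,dB_{s}$ in probability, uniformly on $[0,T]$; applying the bounded-continuous case to $g_{n}$ and passing to the limit first in $n$, then in $R$, gives the full statement.

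The step I expect to be the main obstacle is this last one: one must be careful that each approximation device --- the Riemann partitions, the truncations $g_{n}$, and especially the localizing times $\tau_{R}$ --- is genuinely realized as a deterministic Borel functional of the trajectory, so that ``the same'' operation is being applied on both sides; this is what lets the equality of laws travel through each limiting step, and it is precisely because no integrability of $X$ is assumed that one cannot argue directly in $L^{2}$ and must route the argument through the stopping times $\tau_{R}$.
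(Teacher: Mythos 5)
This proposition is not proved in the manuscript at all: it is imported verbatim from the reference (Corollary 2.13 in \cite{pa-ra/14}), so there is no in--paper proof to compare against. Your argument is correct and is essentially the standard route (and the one the cited book follows in spirit): realize $\int_0^\cdot g(s,X_s)\,dB_s$ as a limit in probability of quantities that are deterministic measurable (indeed continuous, for the Riemann--sum step) functionals of the pair of trajectories $(X,B)$, transport the equality of laws through each such functional, and use that convergence in probability implies convergence in law to identify the limit laws; the reduction to $[0,T]$ and the remark that laws on $C(\mathbb{R}_+;\cdot)$ are determined by their finite--horizon restrictions is also the right bookkeeping. Two points deserve slightly more care than your sketch gives them. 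First, the left--endpoint Riemann sums converge to the It\^o integral only because $B$ is a Brownian motion with respect to a filtration to which $X$ is adapted; this is implicit in $X\in S_d[0,T]$ (p.m.c.s.p.\ on the basis carrying $B$) and should be said, since otherwise the integral itself is not even defined. Second, the approximation of a Carath\'eodory $g$ by \emph{jointly} continuous $g_n$ with the stated pointwise domination $|g_n|\le|g|+1$ is not quite what the standard constructions give: the clean way is to truncate $g$ at level $m$ first (dominated convergence under the localization $\tau_R$ handles this limit) and then apply Scorza--Dragoni plus a bounded continuous extension to the truncated function, so that the error is controlled by the measure of the exceptional $t$--set times the bound $m$; with that rearrangement, and noting as you do that $\tau_R$, the truncations and the extensions are fixed Borel functionals applied identically on both sides, the passage to the limit first in the approximation parameter and then in $R$ goes through exactly as you describe.
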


We present now a continuity property of the mapping%
\begin{equation*}
\left( X,B\right) \longrightarrow\int _{0}^{T}X_{s}dB_{s}~.
\end{equation*}
Given $B:\Omega\times\mathbb{R}_{+}\longrightarrow\mathbb{R}^{k}$ and $%
X:\Omega\times\mathbb{R}_{+}\longrightarrow\mathbb{R}^{d\times k}$ be two
stochastic processes, let $\mathcal{F}_{t}^{B,X}$ be the natural filtration
generated jointly by $B$ and $X$.

For the proof of the next Proposition see Proposition 2.4 in \cite{bu-ra/03}
or Proposition 2.14 in \cite{pa-ra/14}

\begin{proposition}
\label{technical result 8}Let $B$,$B^{n}$,$\tilde{B}^{n}:\Omega\times
\mathbb{R}_{+}\rightarrow\mathbb{R}^{k}$ and $X$,$X^{n}$,$\tilde{X}%
^{n}:\Omega\times\mathbb{R}_{+}\rightarrow\mathbb{R}^{d\times k}$ be
continuous stochastic processes such that

\begin{itemize}
\item[$\left( i\right) $] $\tilde{B}^{n}$ is $\mathcal{F}_{t}^{\tilde{B}^{n},%
\tilde{X}^{n}}-$Brownian motion $\forall~n\geq1;\smallskip$

\item[$\left( ii\right) $] $\mathcal{L}(\tilde{B}^{n},\tilde{X}^{n})=%
\mathcal{L}\left( B^{n},X^{n}\right) $ on $C(\mathbb{R}_{+},\mathbb{R}%
^{k}\times\mathbb{R}^{d\times k})$, for all $n\geq1;\smallskip$

\item[$\left( iii\right) $] $\int _{0}^{T}\left\vert
X_{s}^{n}-X_{s}\right\vert ^{2}ds+\sup\limits_{t\in\left[ 0,T\right]
}\left\vert B_{t}^{n}-B_{t}\right\vert \rightarrow0,$ in probability$%
\smallskip$, as $n\rightarrow\infty,$ for all $T>0.$
\end{itemize}

Then $\big(B^{n},\{\mathcal{F}_{t}^{B^{n},X^{n}}\}\big),n\geq1,\;$and $\big(%
B,\{\mathcal{F}_{t}^{B,X}\}\big)$ are Brownian motions and as $%
n\rightarrow\infty$%
\begin{equation}
\sup_{t\in\left[ 0,T\right] }\left\vert
\int_{0}^{t}X_{s}^{n}dB_{s}^{n}\longrightarrow\int_{0}^{t}X_{s}dB_{s}\right%
\vert \longrightarrow 0\quad\text{in probability}.  \label{technical ineq 3}
\end{equation}
\end{proposition}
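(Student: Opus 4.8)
The plan is to treat the two assertions in turn: first identify $B^{n}$ (resp.\ $B$) as a Brownian motion relative to $\{\mathcal{F}_{t}^{B^{n},X^{n}}\}$ (resp.\ $\{\mathcal{F}_{t}^{B,X}\}$), which makes all four stochastic integrals meaningful, and then establish the uniform convergence in probability (\ref{technical ineq 3}). Throughout, using (iii) one extracts a subsequence --- not relabelled --- along which
\begin{equation*}
\int_{0}^{T}\left\vert X_{s}^{n}-X_{s}\right\vert ^{2}ds+\sup_{t\in\left[0,T\right]}\left\vert B_{t}^{n}-B_{t}\right\vert \longrightarrow 0\quad\mathbb{P}\text{--a.s.}
\end{equation*}
and, by Fubini together with a further extraction in $n$, a full--measure (hence dense) set $\mathcal{T}\subset[0,T]$ such that $X_{u}^{n}\to X_{u}$ in probability for every $u\in\mathcal{T}$. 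Since every subsequence admits such a sub--subsequence with the same candidate limit, it suffices to argue along this one.

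For the first assertion recall that, for a natural filtration $\mathcal{G}_{t}=\mathcal{F}_{t}^{W,Z}$, L\'{e}vy's characterization says that $W$ is a $\mathcal{G}_{t}$--Brownian motion iff, for all rational $0\le s<t$ and all bounded continuous $\Phi$ depending on finitely many coordinates $W_{u}$, $Z_{v}$ with $u,v\le s$,
\begin{equation*}
\mathbb{E}\big[\big(W_{t}-W_{s}\big)\Phi\big]=0\quad\text{and}\quad \mathbb{E}\big[\big(W_{t}^{i}W_{t}^{j}-W_{s}^{i}W_{s}^{j}-\delta_{ij}(t-s)\big)\Phi\big]=0 .
\end{equation*}
Each identity depends only on $\mathcal{L}(W,Z)$, so (i) and (ii) imply that $\big(B^{n},\{\mathcal{F}_{t}^{B^{n},X^{n}}\}\big)$ is a Brownian motion for every $n$. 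Passing to the limit along the subsequence above, the a.s.\ convergence of $B^{n}$ together with $X_{v}^{n}\to X_{v}$ for $v\in\mathcal{T}$ allows, by bounded convergence, to pass these identities to $(B,X)$ for every bounded continuous $\Phi$ of finitely many coordinates $B_{u_{1}},\dots,B_{u_{p}}$ and $X_{v_{1}},\dots,X_{v_{q}}$ with $u_{i}\le s$ and $v_{j}\in\mathcal{T}$, $v_{j}\le s$; since $\mathcal{T}$ is dense and $X$ is continuous, such functionals generate $\mathcal{F}_{s}^{B,X}$, whence by a monotone class argument $\big(B,\{\mathcal{F}_{t}^{B,X}\}\big)$ is a Brownian motion and $\int_{0}^{\cdot}X_{s}dB_{s}$ is well defined.

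For the second assertion the basic tool is the elementary estimate, valid whenever $W$ is a Brownian motion relative to $\{\mathcal{G}_{t}\}$ and $Y$ is continuous and $\mathcal{G}_{t}$--adapted: for every $\varepsilon,\delta>0$,
\begin{equation*}
\mathbb{P}\Big(\sup_{t\le T}\Big|\int_{0}^{t}Y_{s}dW_{s}\Big|>\varepsilon\Big)\le\frac{4\delta}{\varepsilon^{2}}+\mathbb{P}\Big(\int_{0}^{T}\left\vert Y_{s}\right\vert ^{2}ds\ge\delta\Big),
\end{equation*}
obtained by stopping at $\tau_{\delta}=\inf\{t:\int_{0}^{t}|Y_{s}|^{2}ds\ge\delta\}$ and applying the It\^{o} isometry and Doob's inequality to the stopped martingale. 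Fix the uniform mesh $0=t_{0}<\dots<t_{m}=T$ with all nodes in $\mathcal{T}$, set $Z_{s}^{n,m}:=X_{t_{i}}^{n}$ and $Z_{s}^{m}:=X_{t_{i}}$ for $s\in[t_{i},t_{i+1})$, and write
\begin{equation*}
\int_{0}^{t}X_{s}^{n}dB_{s}^{n}-\int_{0}^{t}X_{s}dB_{s}=\int_{0}^{t}\big(X_{s}^{n}-Z_{s}^{n,m}\big)dB_{s}^{n}+\big(R_{t}^{m,n}-R_{t}^{m}\big)+\int_{0}^{t}\big(Z_{s}^{m}-X_{s}\big)dB_{s},
\end{equation*}
where $R_{t}^{m,n}=\sum_{i}X_{t_{i}}^{n}\big(B_{t_{i+1}\wedge t}^{n}-B_{t_{i}\wedge t}^{n}\big)$ and $R_{t}^{m}=\sum_{i}X_{t_{i}}\big(B_{t_{i+1}\wedge t}-B_{t_{i}\wedge t}\big)$ are the pathwise left--point Riemann sums, equal to $\int_{0}^{t}Z_{s}^{n,m}dB_{s}^{n}$ and $\int_{0}^{t}Z_{s}^{m}dB_{s}$ respectively. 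The last term tends to $0$ uniformly in $t$, in probability, as $m\to\infty$, by the estimate above since $\int_{0}^{T}|Z_{s}^{m}-X_{s}|^{2}ds\to0$ a.s.; the middle term tends to $0$ uniformly in $t$, in probability, as $n\to\infty$ for each fixed $m$, because $B^{n}\to B$ uniformly and $X_{t_{i}}^{n}\to X_{t_{i}}$ in probability for the finitely many nodes $t_{i}\in\mathcal{T}$; and for the first term the estimate gives, for every $\delta>0$,
\begin{equation*}
\limsup_{m\to\infty}\ \limsup_{n\to\infty}\ \mathbb{P}\Big(\sup_{t\le T}\Big|\int_{0}^{t}\big(X_{s}^{n}-Z_{s}^{n,m}\big)dB_{s}^{n}\Big|>\varepsilon\Big)\le\frac{4\delta}{\varepsilon^{2}},
\end{equation*}
since $\int_{0}^{T}|X_{s}^{n}-Z_{s}^{n,m}|^{2}ds\le 3\int_{0}^{T}|X_{s}^{n}-X_{s}|^{2}ds+3\int_{0}^{T}|X_{s}-Z_{s}^{m}|^{2}ds+3\sum_{i}\tfrac{T}{m}|X_{t_{i}}-X_{t_{i}}^{n}|^{2}$, whose three summands tend to $0$ in probability, respectively, as $n\to\infty$ uniformly in $m$ (by (iii)), as $m\to\infty$ (uniform continuity of $X$ on $[0,T]$), and as $n\to\infty$ for each fixed $m$ (nodes in $\mathcal{T}$). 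Letting $m\to\infty$, then $n\to\infty$, and finally $\delta\to0$ yields (\ref{technical ineq 3}) along the subsequence, hence for the whole sequence.

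The step I expect to be the main obstacle is precisely this limit passage in both assertions under hypothesis (iii), which controls $X^{n}-X$ only in $L^{2}(dt)$ and not uniformly: one must route the Brownian identification through test functionals built from $X$ sampled on the full--measure set $\mathcal{T}$ (using density and continuity of $X$ to recover $\mathcal{F}_{s}^{B,X}$), and route the comparison of stochastic integrals through Riemann sums with nodes in $\mathcal{T}$, obtaining the uniformity in $n$ of the mesh--refinement error by comparison with the fixed continuous limit $X$ rather than from any equicontinuity of the family $\{X^{n}\}$. Once this is arranged, the remaining ingredients --- L\'{e}vy's characterization, the $\tau_{\delta}$--localization estimate, and the subsequence principle --- are routine.
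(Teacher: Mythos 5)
Your route is the standard one and, in fact, essentially the proof of the references the paper cites for this proposition (Proposition 2.4 of \cite{bu-ra/03}, Proposition 2.14 of \cite{pa-ra/14}): transfer of the martingale characterization of the Brownian property through the equality of laws in (ii), identification of $B$ as an $\mathcal{F}_t^{B,X}$--Brownian motion by passing to the limit in those martingale identities, and comparison of the stochastic integrals through adapted left--point step approximations combined with the localization estimate $\mathbb{P}\big(\sup_{t\leq T}\big|\int_0^t Y_s\,dW_s\big|>\varepsilon\big)\leq 4\delta/\varepsilon^{2}+\mathbb{P}\big(\int_0^T|Y_s|^2ds\geq\delta\big)$. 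The overall structure (subsequence principle, the full--measure set $\mathcal{T}$ of times where $X^n_u\to X_u$ in probability, the three--term decomposition with iterated $\limsup_{m}\limsup_{n}$) is sound.

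Two repairs are needed. First, in the limit passage identifying $\big(B,\{\mathcal{F}_t^{B,X}\}\big)$ as a Brownian motion, the factors $B^n_t-B^n_s$ and $B^{n,i}_tB^{n,j}_t$ are not bounded, so ``bounded convergence'' is not available; replace it by uniform integrability, which is immediate because these variables have fixed Gaussian laws (moments independent of $n$). Second, and more substantively, your mesh necessarily samples at the node $t_0=0$, which need not belong to $\mathcal{T}$, and hypothesis (iii) gives no control whatsoever of $X^n_0-X_0$: for instance $X^n_s=n^{1/4}(1-ns)^{+}$, $X\equiv 0$, $B^n=B$ satisfies (iii) while $X^n_0\to\infty$, so as written both the middle term (through $X^n_0\big(B^n_{t_1\wedge t}-B^n_0\big)$) and the bound $\tfrac{T}{m}\sum_i|X_{t_i}-X^n_{t_i}|^2$ at $i=0$ blow up, even though the conclusion (\ref{technical ineq 3}) of course still holds (the divergences cancel between your first and middle terms). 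The fix is routine: choose $t_1\in\mathcal{T}$ small, define the step approximations to be $0$ on $[0,t_1)$ (sampling only at nodes $t_1,\dots,t_{m-1}\in\mathcal{T}$), and absorb $\int_0^{t_1}X^n_s\,dB^n_s$ and $\int_0^{t_1}X_s\,dB_s$ directly with the same localization estimate, using $\int_0^{t_1}|X^n_s|^2ds\leq 2\int_0^{t_1}|X^n_s-X_s|^2ds+2\int_0^{t_1}|X_s|^2ds$, which is small in probability for $t_1$ small and $n$ large. With these adjustments your argument is complete and matches the cited proof.
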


\subsection{A forward stochastic inequality}

Let $X$, $\hat{X}\in S_{d}$ be two semimartingales defined by%
\begin{equation}
X_{t}=X_{0}+K_{t}+\displaystyle\int_{0}^{t}G_{s}dB_{s}~,\;t\geq0,\quad\hat {X%
}_{t}=\hat{X}_{0}+\hat{K}_{t}+\displaystyle\int_{0}^{t}\hat{G}%
_{s}dB_{s}~,\;t\geq0,  \label{semimart}
\end{equation}
where

\noindent$\lozenge$\ $K,\hat{K}\in S_{d}~;$

\noindent$\lozenge$\ $K_{\cdot}\left( \omega\right) $, $\hat{K}_{\cdot
}\left( \omega\right) \in BV_{loc}\left( \mathbb{R}_{+};\mathbb{R}%
^{d}\right) $, $K_{0}\left( \omega\right) =\hat{K}_{0}\left( \omega\right)
=0 $,\ $\mathbb{P}$--a.s.$~;$

\noindent$\lozenge$\ $G,\hat{G}\in\Lambda_{d\times k}~.$

Assume that there exist $p\geq1$ and $\lambda\geq0$\ and $V$ a bounded
variation p.m.c.s.p., with $V_{0}=0$, such that, as measures on $\mathbb{R}%
_{+},$%
\begin{equation}
\big\langle X_{t}-\hat{X}_{t},dK_{t}-d\hat{K}_{t}\big\rangle+\big(\dfrac{1}{2%
}m_{p}+9p\lambda\big)|G_{t}-\hat{G}_{t}|^{2}dt\leq|X_{t}-\hat{X}%
_{t}|^{2}dV_{t}.  \label{technical ineq 7}
\end{equation}

\begin{theorem}[Corollary 6.74 in \protect\cite{pa-ra/14}]
Let $p\geq 1$. If the assumption (\ref{technical ineq 7}) is satisfied with $%
\lambda >1$, then there exists a positive constant $C_{p,\lambda }$ such
that for all $\delta \geq 0$, $0\leq t\leq s:$%
\begin{equation*}
\mathbb{E}^{\mathcal{F}_{t}}\frac{||e^{-V}(X-\hat{X})||_{\left[ t,s\right]
}^{p}}{\left( 1+\delta ||e^{-V}(X-\hat{X})||_{\left[ t,s\right] }^{2}\right)
^{p/2}}\leq C_{p,\lambda }\displaystyle\frac{e^{-pV_{t}}|X_{t}-\hat{X}%
_{t}|^{p}}{\big(1+\delta e^{-2V_{t}}|X_{t}-\hat{X}_{t}|^{2}\big)^{p/2}}%
~,\quad \mathbb{P}\text{--a.s.}
\end{equation*}%
In particular for $\delta =0$%
\begin{equation*}
\mathbb{E}^{\mathcal{F}_{t}}||e^{-V}(X-\hat{X})||_{\left[ t,s\right]
}^{p}\leq C_{p,\lambda }~e^{-pV_{t}}|X_{t}-\hat{X}_{t}|^{p},\quad \mathbb{P}%
\text{--a.s.},
\end{equation*}%
for all $0\leq t\leq s.$
\end{theorem}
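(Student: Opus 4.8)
The final statement is a forward--type stochastic inequality for the difference of the two semimartingales in (\ref{semimart}). The plan is to reduce it, through It\^{o}'s formula, to a clean a priori estimate for a nonnegative continuous semimartingale whose finite--variation part is decreasing and whose martingale part has quadratic variation dominated by a constant times the product of the process with that decreasing part, and then to conclude by a conditional Doob/Burkholder--Davis--Gundy absorption argument.

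\textbf{Step 1: reduction by It\^{o}.} I would set $Z_{r}:=X_{r}-\hat{X}_{r}$, $G^{\Delta}_{r}:=G_{r}-\hat{G}_{r}$, so that by (\ref{semimart}) $dZ_{r}=(dK_{r}-d\hat{K}_{r})+G^{\Delta}_{r}dB_{r}$, and introduce $W_{r}:=e^{-2V_{r}}|Z_{r}|^{2}\ge 0$. Computing $dW_{r}$ by It\^{o}'s formula (no cross term, since $V$ is continuous of bounded variation) and inserting the hypothesis (\ref{technical ineq 7}) into the $\langle Z_{r},dK_{r}-d\hat{K}_{r}\rangle$ term, the two copies of $|Z_{r}|^{2}dV_{r}$ cancel and, after the It\^{o} correction, one is left with $dW_{r}\le-c_{p,\lambda}\,e^{-2V_{r}}|G^{\Delta}_{r}|^{2}\,dr+dN_{r}$, where $N_{r}:=2\int_{0}^{r}e^{-2V_{u}}\langle Z_{u},G^{\Delta}_{u}dB_{u}\rangle$ and $c_{p,\lambda}:=m_{p}+18p\lambda-1$; the excess factor $\tfrac12 m_{p}+9p\lambda$ in (\ref{technical ineq 7}) is exactly what makes $c_{p,\lambda}>0$ when $\lambda>1$. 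Since also $d\langle N\rangle_{r}\le 4e^{-4V_{r}}|Z_{r}|^{2}|G^{\Delta}_{r}|^{2}dr=4e^{-2V_{r}}|G^{\Delta}_{r}|^{2}W_{r}\,dr$, writing $A_{r}:=c_{p,\lambda}\int_{0}^{r}e^{-2V_{u}}|G^{\Delta}_{u}|^{2}du$ (increasing, $A_{0}=0$), the finite--variation part of $W$ lies below $-A$ and $d\langle N\rangle_{r}\le\frac{4}{c_{p,\lambda}}W_{r}\,dA_{r}$.

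\textbf{Step 2: removing the fraction.} As $g(u):=u/(1+\delta u)$ is increasing and concave with $g(0)=0$, $0\le g'\le 1$, $g''\le 0$, one has $\|e^{-V}(X-\hat{X})\|_{[t,s]}^{2}/(1+\delta\|e^{-V}(X-\hat{X})\|_{[t,s]}^{2})=\sup_{r\in[t,s]}g(W_{r})$; applying It\^{o} to $Q_{r}:=g(W_{r})$, the concavity of $g$ only adds a nonpositive contribution, so on $[t,s]$ one gets $Q_{r}=Q_{t}-(\tilde{A}_{r}-\tilde{A}_{t})+(\tilde{N}_{r}-\tilde{N}_{t})$ with $\tilde{A}$ increasing, $\tilde{A}_{t}=0$, and, using $g'(u)u\le g(u)$, $d\langle\tilde{N}\rangle_{r}\le\frac{4}{c_{p,\lambda}}Q_{r}\,d\tilde{A}_{r}$. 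Both sides of the claimed inequality then read $\mathbb{E}^{\mathcal{F}_{t}}[\sup_{[t,s]}Q_{r}^{p/2}]$ and $C_{p,\lambda}Q_{t}^{p/2}$, so it suffices to prove the master bound $\mathbb{E}^{\mathcal{F}_{t}}\big[\sup_{r\in[t,s]}Q_{r}^{p/2}\big]\le C_{p,\lambda}\,Q_{t}^{p/2}$; the displayed particular case is $g(u)=u$, i.e. $\delta=0$.

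\textbf{Step 3: master bound, and the main obstacle.} For $\varepsilon>0$ I would apply It\^{o} to $P_{r}:=(Q_{r}+\varepsilon)^{p/2}$: using $d\langle Q\rangle_{r}\le\frac{4}{c_{p,\lambda}}(Q_{r}+\varepsilon)\,d\tilde{A}_{r}$, the hypothesis $\lambda>1$ makes the second--order term dominated by the (nonpositive) first--order one, so $P_{r}=P_{t}-(\mathcal{A}_{r}-\mathcal{A}_{t})+(\mathcal{M}_{r}-\mathcal{M}_{t})$ with $\mathcal{A}$ increasing, $\mathcal{A}_{t}=0$, $d\mathcal{A}_{r}\ge\kappa_{p,\lambda}\tfrac{p}{2}(Q_{r}+\varepsilon)^{p/2-1}d\tilde{A}_{r}$ for some $\kappa_{p,\lambda}\in(0,1]$, and $d\langle\mathcal{M}\rangle_{r}\le\theta_{p,\lambda}\,P_{r}\,d\mathcal{A}_{r}$. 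Localizing by stopping times $\tau_{n}\uparrow\infty$ so that $\mathcal{M}^{\tau_{n}}$ is a true martingale and taking $\mathbb{E}^{\mathcal{F}_{t}}$ gives $\mathbb{E}^{\mathcal{F}_{t}}[\mathcal{A}_{s\wedge\tau_{n}}]\le P_{t}$; since $P_{r}\le P_{t}+(\mathcal{M}_{r}-\mathcal{M}_{t})$, the conditional Burkholder--Davis--Gundy inequality together with $\langle\mathcal{M}\rangle_{s\wedge\tau_{n}}\le\theta_{p,\lambda}\|P\|_{[t,s\wedge\tau_{n}]}\mathcal{A}_{s\wedge\tau_{n}}$, Cauchy--Schwarz, and absorption of half the left--hand side yields $\mathbb{E}^{\mathcal{F}_{t}}\|P\|_{[t,s\wedge\tau_{n}]}\le C_{p,\lambda}P_{t}$; letting $n\to\infty$ (conditional Fatou) and $\varepsilon\downarrow 0$ (monotone convergence) gives the master bound, hence by Step 2 the theorem. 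The obstacle I expect is twofold: the It\^{o} bookkeeping across the three changes of variable ($e^{-2V}|Z|^{2}$, then $g(W)$, then the power $p/2$), which must be carried out carefully enough to confirm that the finite--variation part stays decreasing and that every surviving constant depends only on $p$ and $\lambda$ --- this is precisely what dictates the coefficient $\tfrac12 m_{p}+9p\lambda$ and the restriction $\lambda>1$; and the total absence of integrability assumptions on $\xi$, $G$, $\hat{G}$, which forces every martingale manipulation to be done along a localizing sequence, with the final estimate recovered by conditional Fatou and monotone convergence.
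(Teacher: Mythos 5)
Your sketch is essentially sound, but be aware that the paper itself contains no proof of this statement: it is imported verbatim (as Corollary 6.74) from \cite{pa-ra/14}, so the only thing to compare against is the argument in that reference, whose strategy your reconstruction follows quite closely. I checked your bookkeeping and it goes through: for $W=e^{-2V}|X-\hat{X}|^{2}$ the two $|X-\hat{X}|^{2}dV$ terms indeed cancel and (\ref{technical ineq 7}) leaves a finite--variation part dominated by $-c_{p,\lambda}e^{-2V}|G-\hat{G}|^{2}dt$ with $c_{p,\lambda}=m_{p}+18p\lambda-1$, together with $d\langle N\rangle_{r}\le 4W_{r}e^{-2V_{r}}|G_{r}-\hat{G}_{r}|^{2}dr$; the concave transform $g(u)=u/(1+\delta u)$ with $g(0)=0$, $0\le g'\le1$, $g'(u)u\le g(u)$ preserves this structure; and for $P=(Q+\varepsilon)^{p/2}$ the second--order It\^{o} term (present only for $p>2$) is absorbed by the drift as soon as $c_{p,\lambda}>p-2$, which $\lambda>1$ guarantees whatever the (undefined in this paper) constant $m_{p}\ge0$ is; the localization, conditional BDG, Cauchy--Schwarz and absorption step, followed by conditional Fatou/monotone convergence in $n$ and $\varepsilon$, then yields exactly the claimed bound, since $\sup_{[t,s]}g(W)^{p/2}$ and $g(W_{t})^{p/2}$ are the two sides of the stated inequality. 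Two caveats, neither fatal: first, your assertion that the factor $\tfrac{1}{2}m_{p}+9p\lambda$ and the threshold $\lambda>1$ are ``exactly'' what makes $c_{p,\lambda}>0$ in Step 1 is not how \cite{pa-ra/14} calibrates these constants --- there the budget is split between the It\^{o} correction of the $p$-th power and the BDG absorption --- but since the theorem only asserts existence of some $C_{p,\lambda}$ and your weaker requirement $c_{p,\lambda}>(p-2)^{+}$ does follow from $\lambda>1$, this is only a matter of presentation; second, the decompositions in Steps 2--3 should be phrased as ``the finite--variation part is an increasing process dominating $c_{p,\lambda}\int g'(W)\,dA$'' (an inequality between measures) rather than as exact equalities with those integrals, and the absorption of $\tfrac12\mathbb{E}^{\mathcal{F}_{t}}\Vert P\Vert_{[t,s\wedge\tau_{n}]}$ requires that quantity to be finite, so the stopping times $\tau_{n}$ must be chosen to bound $Q$ and the increasing processes as well --- your closing remark about localization covers this, but it should be made explicit in a written-out proof.
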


As a consequence of the above theorem, since%
\begin{equation*}
\frac{1}{2}\left( 1\wedge r\right) \leq\frac{r}{\left( 1+r^{2}\right) ^{1/2}}%
\leq1\wedge r,\quad\forall r\geq0,
\end{equation*}
we obtain:

\begin{corollary}
\label{technical result 5}If assumption (\ref{technical ineq 7}) is
satisfied with $\lambda>1$ and $p\geq1,$ then there exists a positive
constant $C_{p,\lambda}$ depending only on $\left( p,\lambda\right) $ such
that $\mathbb{P}-a.s.$%
\begin{equation*}
\mathbb{E}^{\mathcal{F}_{t}}\left[ 1\wedge||e^{-V}(X-\hat{X})||_{\left[ t,s%
\right] }^{p}\right] \leq C_{p,\lambda}\left[ 1\wedge|e^{-V_{t}}(X_{t}-\hat{X%
}_{t})|^{p}\right] ,
\end{equation*}
for all $0\leq t\leq s.$
\end{corollary}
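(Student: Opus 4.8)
The plan is to obtain the corollary by a direct specialization of the preceding theorem: take $\delta=1$ there and then apply the elementary two-sided estimate $\tfrac12(1\wedge r)\le r/(1+r^2)^{1/2}\le 1\wedge r$, $r\ge0$, recalled just above the statement. First I would introduce the shorthand $R:=||e^{-V}(X-\hat X)||_{[t,s]}$ and $r_0:=e^{-V_t}|X_t-\hat X_t|$, and record the algebraic identity $u^p/(1+u^2)^{p/2}=\big(u/(1+u^2)^{1/2}\big)^p$ together with $e^{-pV_t}|X_t-\hat X_t|^p=r_0^p$ and $e^{-2V_t}|X_t-\hat X_t|^2=r_0^2$. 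With these, the theorem specialized to $\delta=1$ becomes
\[
\mathbb{E}^{\mathcal{F}_t}\Big(\frac{R}{(1+R^2)^{1/2}}\Big)^p\le C_{p,\lambda}\Big(\frac{r_0}{(1+r_0^2)^{1/2}}\Big)^p,\qquad\mathbb{P}\text{--a.s.},\ 0\le t\le s.
\]

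Next I would insert the two-sided bound into both sides. On the left, $r/(1+r^2)^{1/2}\ge\tfrac12(1\wedge r)$ applied with $r=R$ gives, after raising to the power $p\ge1$ and using the monotonicity of $x\mapsto x^p$ on $[0,\infty)$ (so that $(1\wedge R)^p=1\wedge R^p$),
\[
\Big(\frac{R}{(1+R^2)^{1/2}}\Big)^p\ge 2^{-p}\big(1\wedge R^p\big).
\]
On the right, $r/(1+r^2)^{1/2}\le 1\wedge r$ applied with $r=r_0$ gives $\big(r_0/(1+r_0^2)^{1/2}\big)^p\le 1\wedge r_0^p=1\wedge|e^{-V_t}(X_t-\hat X_t)|^p$. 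Combining the three displays and multiplying through by $2^p$ yields
\[
\mathbb{E}^{\mathcal{F}_t}\big[1\wedge||e^{-V}(X-\hat X)||_{[t,s]}^p\big]\le 2^p C_{p,\lambda}\big[1\wedge|e^{-V_t}(X_t-\hat X_t)|^p\big],
\]
and renaming $2^p C_{p,\lambda}$ as $C_{p,\lambda}$ (still depending only on $(p,\lambda)$) completes the argument; the a.s.\ statement holds simultaneously for all $0\le t\le s$ because this is already guaranteed by the theorem.

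There is no real obstacle here: the whole substance is carried by the cited theorem (Corollary 6.74 in \cite{pa-ra/14}), and what remains is the bookkeeping above. The only points deserving a line of care are the identity $u^p/(1+u^2)^{p/2}=\big(u/(1+u^2)^{1/2}\big)^p$, which lets the scalar estimate be applied inside the $p$-th power, and the elementary fact that $(1\wedge r)^p=1\wedge r^p$ for $p\ge1$, $r\ge0$.
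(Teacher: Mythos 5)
Your proposal is correct and is essentially the paper's own argument: the corollary is obtained by taking $\delta=1$ in the preceding theorem and applying the elementary inequality $\tfrac{1}{2}(1\wedge r)\le r/(1+r^{2})^{1/2}\le 1\wedge r$ on both sides, absorbing the factor $2^{p}$ into the constant. The bookkeeping steps you single out (writing the quotient as a $p$-th power and using $(1\wedge r)^{p}=1\wedge r^{p}$) are exactly what the paper leaves implicit.
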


\end{document}